\newcommand{\R}{\mathbb{R}}
\newcommand{\C}{\mathbb{C}}
\newcommand{\Z}{\mathbb{Z}}
\newcommand{\N}{{\mathbb N}}
\newcommand{\D}{{\mathbb D}}
 \newcommand{\SLE}{{\rm SLE}}
\numberwithin{equation}{section}
\theoremstyle{plain} 
\newtheorem{thm}[equation]{Theorem}
\newtheorem{cor}[equation]{Corollary}
\newtheorem{lem}[equation]{Lemma}
\newtheorem{prop}[equation]{Proposition}
\theoremstyle{definition}
\newtheorem{defn}[equation]{Definition}
\theoremstyle{remark}
\newtheorem{rem}[equation]{Remark}
\begin{document}

\title{GENERALIZED INTEGRAL MEANS SPECTRUM OF SLE}

\author{Ho Xuan Hieu}

\address{Institut de Math\'ematiques de Bordeaux UMR5251\\
Universit\'e de Bordeaux\\
352, cours de la Lib\'eration - F 33 405 TALENCE}


\email{xuan-hieu.ho@math.u-bordeaux.fr}


\date{\today}

\begin{abstract}
	We extend the validity domain of the conjecture about the average generalized integral means spectrum of whole--plane SLE introduced in \cite{DHLZ2015arXiv,DHLZ2018}. Thence we improve the results obtained in \cite{DHLZ2015arXiv,DHLZ2018} on the average generalized integral means spectrum of the m-fold transform associated to the whole--plane SLE map. An extension to a new interval of parameter for the usual average integral means spectrum of the interior whole--plane SLE is also given.
 \end{abstract}

\maketitle


 \section{Introduction}
 The {\it Loewner chains} have become well known since they were introduced for the first time by Charles Loewner in 1923 to prove the {\it Bieberbach's conjecture} for $n=3$ \cite{lowner1923}. This famous conjecture, that is nowadays a classical theorem in complex analysis and theory of univalent functions, states that if $f$ is a function in the class $\mathcal{S}=\{f: \text{ holomorphic and injective in } \D, f(0)=0,f'(0)=1\}$ and $f$ has a Taylor series expansion $f(z)=z+\sum_{n=2}^{+\infty} a_n z^n$ then
 \begin{equation}
 \vert a_n \vert\leq n.
 \end{equation}
 Ludwig Bieberbach proved that $\vert a_2 \vert\leq2$ and stated the conjecture in 1916 \cite{bieberbach1916}. While the case $n=2$ can be obtained as a consequence of the Area theorem, the proof for $n=3$ by Loewner was based on Loewner theory. A Loewner chain $(f_t)_{t\geq 0}$ is a family of univalent functions defined as the solution to the so-called Loewner equation
 \begin{equation}\label{eq_loewner}
 \frac{\partial f_t(z)}{\partial t}=z\frac{\partial f_t(z)}{\partial z}\frac{\lambda(t)+z}{\lambda(t)-z},
 \end{equation}
 where $\lambda:[0,+\infty)\rightarrow \partial\D$ is a continuous function.\\
 
 In 1999, Odded Schramm revived Loewner chain by introducing a stochastic version, the now called {\it Schramm--Loewner Evolution}. The idea is to consider the Loewner equation \eqref{eq_loewner} with a Brownian driving function  $\lambda(t):=e^{i\sqrt{\kappa}B_t}$ instead of a deterministic one. Here $B_t$ is the standard one--dimensional Brownian motion and $\kappa$ is a nonnegative real parameter. With this setting, he showed that $\SLE$ with $\kappa=2$ is a possible candidate for the scaling limits of loop-erased random walks and uniform spanning trees \cite{Schramm2000}. Since then, $\SLE$ has been intensively studied by mathematicians and physicists. It has been proved or conjectured to be the scaling limit of numerous lattice models in statistical physics.\\

 In this paper, we study the {\it average generalized integral means spectrum} of whole--plane SLE (see Section \ref{section intro SLE} for a precise definition of whole--plane--SLE). Let us first recall that the whole--plane SLE map at time 0 $(f=f_0)$ belongs to the class $\mathcal{S}$. For a function $f\in\mathcal{S}$, the {\it integral means spectrum} is defined as
 \begin{equation}\label{eq def int means spec}
 \beta_f(p):=\limsup_{r\to 1^-}\frac{\log\int_{r\partial\D}\vert f'(z) \vert^p \vert dz \vert}{-\log(1-r)}, \,\,p\in \R.
 \end{equation}
 To take care of the possible unboundedness of the image domain, we define the {\it generalized integral mean spectrum} as
  	\begin{equation}\label{eq def generalized int means spec}
  \beta_f(p,q):=\limsup_{r\to 1^-}\frac{\log\int_{r\partial\D}\vert z \vert^q\frac{\vert f'(z) \vert^p}{\vert f(z) \vert^q} \vert dz \vert}{-\log(1-r)}, \,\,(p,q)\in\R^2.
  \end{equation}
  In the case $f$ is random, we define the generalized integral means spectrum in expectation as
  \begin{equation}
 \beta(p,q):=\limsup_{r\to 1^-}\frac{\log\int_{r\partial\D}\mathbb{E}\bigg(\vert z \vert^q\frac{\vert f'(z) \vert^p}{\vert f(z) \vert^q}\bigg) \vert dz \vert}{-\log(1-r)}, \,\,(p,q)\in\R^2.
 \end{equation}
 The goal of this paper is to study this spectrum for time 0 whole--plane SLE map.
This generalized spectrum was introduced for the first time in \cite{DHLZ2018} to give a unified framework of various spectra in the SLE context. Namely, the case $q=0$ corresponds to the usual average integral means spectrum of the interior whole--plane SLE
 \begin{equation}
 \beta_f(p):=\limsup_{r\to 1^-}\frac{\log\int_{r\partial\D}\mathbb{E}(\vert f'(z) \vert^p) \vert dz \vert}{-\log(1-r)}, \label{average int means spec}
 \end{equation}
 while the case $q=2p$ corresponds to the usual average integral means spectrum of the bounded whole--plane SLE studied by Beliaev and Smirnov in \cite{BS2009}. This version of whole--plane SLE related to the interior one by an inversion symmetry
 \begin{equation}\label{eq inter-exter SLE relation}
 \hat{f}_t(\zeta)\stackrel{\rm (law)}{=}\frac{1}{f_t(\frac{1}{\zeta})}, \hspace{0.3cm}\zeta\in\Delta=\C\setminus\D.
 \end{equation}
 Moreover, the generalized spectrum $\beta(p,q_m)$ where $q_m=q_m(p,q)=(1-1/m)p+q/m$ also give the average generalized integral means spectrum of the SLE m--fold maps that are defined as $f^{[m]}: z \mapsto [f(z^m)]^{1/m}$ for $m\in \N\setminus\{0\}$ and $f^{[m]}(\zeta)=1/f^{[-m]}(1/\zeta)$ for $m\in \Z\setminus \N$.\\
 
The first rigorous results on the values of the average integral means spectrum of SLE were obtained by Beliaev and Smirnov \cite{BS2009}. In that work, they studied the (bounded) exterior whole--plane SLE which is corresponding to the case $q=2p$ of the generalized spectrum (see Section \ref{section intro SLE} for a precise definition of exterior whole--plane SLE). They used a martingale argument and a kind of Markov property of SLE to derive a PDE, the now called Beliaev--Smirnov equation, obeyed by the derivative moment $\mathbb{E}(\vert \hat{f}'(z) \vert^p)$, where $\hat{f}=\hat{f}_0$ is the SLE map at time 0. They then applied the maximum principle to estimate the integral means of  the solution on the circles $\vert z \vert=r$, for all $r$ sufficiently closed to $1$, by those of a sub--solution and a super--solution of the above equation. By constructing appropriate sub--solutions and super--solutions for the equation of $\mathbb{E}(\vert \hat{f}'(z) \vert^p)$, they showed that the spectrum has three phases
\begin{equation}\label{eq 3 phase spectrum BS}
\beta(p,2p)=\left\{
\begin{array}{ll}
\beta_{tip}(p)& p\leq p'_0(\kappa)\\
\beta_0(p) & p'_0(\kappa)<p\leq p_0(\kappa)\\
\beta_{lin}(p) & p>p_0(\kappa)\\
\end{array} 
\right.
\end{equation}
where
\begin{align}
	\beta_{tip}(p)&=-p-1+\frac{1}{4}(4+\kappa-\sqrt{4+\kappa)^2-8\kappa p}),\\
	\beta_{0}(p)&=-p+\frac{4+\kappa}{4\kappa}(4+\kappa-\sqrt{4+\kappa)^2-8\kappa p}),\\
	\beta_{lin}(p)&=p-\frac{(4+\kappa)^2}{16\kappa}
\end{align}
and
\begin{align}
	p'_0(\kappa)&=-1-3\kappa/8,\\
	p_0(\kappa)&=3(4+\kappa)^2/32\kappa.
\end{align}
The method in \cite{BDZ2016,BDZ2017,BS2009} has been developed by Duplantier, Nguyen, Nguyen, Zinsmeister \cite{DNNZ2011,DNNZ2015} for the unbounded version of whole--plane SLE. The authors discovered a new phase of the average integral means spectrum that appears before the phase $\beta_{lin}$ in \eqref{eq 3 phase spectrum BS}. Namely, they obtained
\begin{equation}
\beta(p,0)=\left\{
\begin{array}{ll}
\beta_{tip}(p)& p\leq p'_0(\kappa)\\
\beta_0(p) & p'_0(\kappa)<p\leq p^*(\kappa)\\
\beta_{1}(p,0) & p^*(\kappa)<p<\min(\hat{p}(\kappa),p(\kappa)),\\
\end{array} 
\right.
\end{equation}
where
\begin{equation}
	\beta_{1}(p)=3p-\frac{1}{2}-\frac{1}{2}\sqrt{1+2\kappa p}
\end{equation}
and
\begin{align}
	p^*(\kappa)&=(\sqrt{2(4+\kappa)^2+4}-6)(\sqrt{2(4+\kappa)^2+4}+2)/32\kappa,\\
	\hat{p}(\kappa)&=1+\kappa/2,\label{p hat}\\
	p(\kappa)&=(6+\kappa)(2+\kappa)/8\kappa.
\end{align}
The {\it bulk spectrum} $\beta_{0}$ is due to the singularity of the derivative moment around the generic part of the boundary of the SLE image domain. This spectrum has been predicted earlier by Duplantier \cite{dup2000}. The {\it tip spectrum} $\beta_{tip}$ is due to the singularity at the growing tip of SLE trace and predicted by Hasting \cite{has2002}. The {\it linear spectrum} $\beta_{lin}$ is due to the linear prolongation by convexity of the generalized spectrum. The appearance of the phase $\beta_1$ is due to the unboundedness of the interior whole--plane SLE map $f$ (and can be thought as coming from the tip at infinity). Let us notice that this new phase of spectrum has also been discovered by Loutsenko and Yermolayeva in \cite{Lou2013} where they used another method to compute the average integral means spectrum of the interior whole--plane SLE and by Beliaev, Duplantier and Zinsmeister in \cite{BDZ2017} where they filled a gap in \cite{BS2009}.\\

The methods in \cite{DNNZ2015} were again used to study the generalized integral means spectrum of whole--plane SLE in \cite{DHLZ2018}. They led the authors to introduce a generalization of the above spectrum function $\beta_1$ as
\begin{equation}
	\beta_{1}(p,q):=3p-2q-\frac{1}{2}-\frac{1}{2}\sqrt{1+2\kappa(p-q)}.
\end{equation}
In the sequel, $\beta_1$ will always stand for this generalized function. The authors conjectured that the generalized spectrum \eqref{eq def generalized int means spec} continuously takes four phases $\beta_{tip},\beta_{0},\beta_{lin},\beta_1$ on four domains of the $(p,q)$--plane separated by five transition lines. To be precise, let us first introduce the subsets of parameter $(p,q)-$plane where these spectrum functions coincide. Let $\mathcal{R}$ and $\mathcal{G}$ the parabolas, so-called \textcolor{red}{ red} and \textcolor{green}{ green}, respectively defined by
\begin{equation}
\begin{split}
p&=p_\mathcal{R}(\gamma):=(2+\frac{\kappa}{2})\gamma-\frac{\kappa}{2}\gamma^2,\\
q&=q_\mathcal{R}(\gamma):=(3+\frac{\kappa}{2})\gamma-\kappa\gamma^2, \gamma\in\R
\end{split}
\end{equation}
and
\begin{equation}
\begin{split}
p&=p_\mathcal{G}(\gamma'):=\frac{(4+\kappa)^2}{8\kappa}-\frac{\kappa}{2}\gamma'^2,\\
q&=q_\mathcal{G}(\gamma'):=\frac{(4+\kappa)^2}{8\kappa}+\gamma'-\kappa\gamma'^2, \gamma'\in\R.
\end{split}
\end{equation}
The intersection of the red and green parabolas is located at the two points
\begin{align}
	P_0: p_0&=p_0(\kappa)=\frac{3(4+\kappa)^2}{32\kappa}, q_0=\frac{(4+\kappa)(8+\kappa)}{16\kappa},\\
	P_1: p_1&=\frac{(8+\kappa)(8+3\kappa)}{32\kappa},  q_0=\frac{(4+\kappa)(8+\kappa)}{16\kappa}.
\end{align}
The spectrum functions  $\beta_{0}$ and $\beta_1$ coincide on the part of $\mathcal{R}$ corresponding to the interval $[1/\kappa,2/\kappa+1/2]$ of the parameter $\gamma$ and on the part of $\mathcal{G}$ corresponding to the interval $[1/\kappa,+\infty)$ of the parameter $\gamma'$. We call these sets the middle part of $\mathcal{R}$ and the left branch of $\mathcal{G}$.
The spectrum function $\beta_{0}$ respectively coincides with $\beta_{tip}$ and $\beta_{lin}$ on the vertical lines
\begin{align}
	D'_0:&=\bigg\{(p,q): p=-1-\frac{3\kappa}{8}\bigg\},\\
	D_0:&=\bigg\{(p,q): p=p_0=\frac{3(4+\kappa)^2}{32\kappa}\bigg\}.	
\end{align}
Note that the vertical line $D_0$ also passes through the intersection point $P_0$ of the red parabola and the green parabola.
The spectrum functions $\beta_{lin}$ and $\beta_{1}$ coincide on the line of slope 1
\begin{equation}
D_1:=\bigg\{(p,q): q-p=q_0-p_0=\frac{16-k^2}{32\kappa}\bigg\}.
\end{equation}
Finally, $\beta_{tip}$ and $\beta_{1}$ coincide on the part corresponding to $\gamma\in[1/\kappa,+\infty)$ of the following branch of a quartic, so--called \textcolor{blue}{blue} quartic $\mathcal{Q}$
\begin{equation}
\begin{split}
	p&=p_\mathcal{Q}(\gamma):=\frac{\kappa}{16}\bigg(1+\frac{\kappa}{4}\bigg)\gamma-\frac{\kappa}{2}\gamma^2-\frac{1}{8}\Delta^{\frac{1}{2}}(\gamma),\\
q&=q_\mathcal{Q}(\gamma):=p_\mathcal{Q}(\gamma)+\gamma-\kappa\gamma^2, \gamma\in\R
\end{split}
\end{equation}
where
\begin{equation}
	\Delta(\gamma)=4k^2\gamma^2-2\kappa(4+\kappa)\gamma+\frac{1}{4}(8+\kappa)^2+4\kappa.
\end{equation}
The intersection of the blue quartic $\mathcal{Q}$ with the green parabola $\mathcal{G}$ is located at
\begin{equation}
	Q_0: p_0'=-1-\frac{3\kappa}{8}, q_0'=-2-\frac{7\kappa}{8}.
\end{equation}
Note that the vertical line $D_0'$ also passes through $Q_0$. In \cite{DHLZ2018}, the authors conjectured that $\beta(p,q)$ is respectively given by $\beta_{tip},\beta_{0},\beta_{lin},\beta_1$ on the domains $\mathfrak{D}_{tip}, \mathfrak{D}_{0}, \mathfrak{D}_{lin}, \mathfrak{D}_{1}$ defined as: $\mathfrak{D}_{tip}$ is the domain located above the blue quartic $\mathcal{Q}$ up to point $Q_0$ and to the left of the upper half-line $D'_0$ starting at $Q_0$; $\mathfrak{D}_0$ is the domain located to the right of the upper half-line $D'_0$ starting from $Q_0$, above the part $Q_0P_0$ of the left branch of the green parabola $\mathcal{G}$, to the left of the upper half-line $D_0$ starting from $P_0$; $\mathfrak{D}_{lin}$ is the infinite wedge of apex $P_0$ situated to the right of the line $D_0$ and above the line $D_1$; $\mathfrak{D}_1$ is the domain situated below the line $D_1$, to the right of the left branch of the green parabola $\mathcal{G}$ above the point $Q_0$ and to the right of the blue quartic $\mathcal{Q}$ below the point $Q_0$ (see Figure \ref{fig_SLE_generalized_spectrum_diagram}). In fact, this is the only possible scenario that emerges to construct the average generalized integral means spectrum by a continuous matching of the four spectra $\beta_{tip},\beta_{0},\beta_{lin},\beta_1$ along the phase transition lines described above.

In the same work, the authors developed the methods in \cite{BS2009,DNNZ2015,BDZ2017} to the generalized spectrum and proved this conjecture for a large domain of the $(p,q)$--plane that includes the domains corresponding to the phases $\beta_{tip},\beta_{0},\beta_{lin}$ and a part of the domain corresponding to the phase $\beta_1$. Namely, they proved the validity of the conjecture to the domain situated to the left of the right branch of the green parabola $\mathcal{G}$ up to the intersection point with the line $D_3$: $q-p=-1-\kappa/2$, followed by the line $D_3$ up to the intersection point of this line with the right part of red parabola $\mathcal{R}$ starting from $P_0$, followed by the red parabola up to point $P_0$, then followed by the upper half-line $D_1$: $q=p-(k^2-16)/32\kappa$ (see Figure \ref{fig_old_validity_zone}). They also showed that $\beta(p,q)$ is bounded above by $\beta_1(p,q)$ on the rest of of the $(p,q)$--plane. The corresponding results on the average generalized integral means spectrum $\beta^{[m]}$ associated to the m--fold transform $f^{[m]}$ of the whole--plane SLE map was also obtained by the relation
\begin{equation}\label{eq_relation_gener_spec_m fold and standard_intro}
	\begin{split}
	&\beta^{[m]}(p,q)=\beta(p,q_m),\\
	&q_m=q_m(p,q)=(1-1/m)p+q/m.
	\end{split}
\end{equation}

 The main result of this paper is an extension of the above validity zone of the conjecture about the average generalized integral means spectrum of SLE. Let us denote $\mathcal{I}$ the interior domain whose boundary is the \textcolor{red}{red} parabola $\mathcal{R}$ (see Figure \ref{fig_E_I_E}).
 \begin{thm}\label{thm_main_theorem}
 	The average generalized integral means spectrum $\beta(p,q)$ of whole--plane $\SLE$ in the domain $\mathfrak{D}_1\cap \mathcal{I}$ is given by
 	\begin{equation}
 	\beta_1(p,q)=3p-2q-\frac{1}{2}-\frac{1}{2}\sqrt{1+2\kappa(p-q)}.
 	\end{equation}
 \end{thm}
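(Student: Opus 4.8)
We propose the following approach. The upper bound $\beta(p,q)\le\beta_1(p,q)$ on $\mathfrak{D}_1\cap\mathcal{I}$ is already available from \cite{DHLZ2018}: on the part of $\mathfrak{D}_1\cap\mathcal{I}$ lying in the validity zone treated there the full conjecture is proved, and on the remainder the domination $\beta\le\beta_1$ is established. Hence the entire content of Theorem~\ref{thm_main_theorem} is the matching lower bound
\begin{equation*}
\beta(p,q)\ \geq\ \beta_1(p,q)\qquad\text{for all }(p,q)\in\mathfrak{D}_1\cap\mathcal{I},
\end{equation*}
which we plan to prove by a comparison argument for the Beliaev--Smirnov PDE. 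Writing $z=re^{i\theta}$, set
\begin{equation*}
N(r,\theta)=\mathbb{E}\!\left(|z|^{q}\,\frac{|f'(z)|^{p}}{|f(z)|^{q}}\right),\qquad M(r)=\int_{0}^{2\pi}N(r,\theta)\,r\,d\theta,
\end{equation*}
where $f=f_0$ is the time--$0$ whole--plane $\SLE_\kappa$ map; then $N$ is finite for $r<1$ by the Koebe distortion theorem and $\beta(p,q)=\limsup_{r\to1^-}\log M(r)/(-\log(1-r))$. As in \cite{BS2009,DNNZ2015,BDZ2017,DHLZ2018}, the stationarity of whole--plane SLE under its flow together with It\^o's formula gives a linear second--order PDE $\mathcal{L}_{p,q}N=0$ on $\D\setminus\{0\}$ with coefficients depending only on $(\kappa,p,q)$, to which the comparison machinery applies (the already known finiteness of $M$ providing the required regularity). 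It will therefore be enough to produce a constant $c>0$ and a nonnegative $\mathcal{L}_{p,q}$--subsolution $\underline{N}$ on some annulus $\{r_0\le|z|<1\}$ with $c\,\underline N\le N$ there and $\int_{0}^{2\pi}\underline N(r,\theta)\,d\theta\gtrsim(1-r)^{-\beta_1(p,q)}$ as $r\to1^-$.

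The construction of $\underline N$ is where the hypothesis $(p,q)\in\mathcal{I}$ enters. Observe first that on the red parabola $\mathcal{R}$, parametrized by $\gamma$, one has $p-q=\tfrac{\kappa}{2}\gamma^{2}-\gamma$, hence $1+2\kappa(p-q)=(1-\kappa\gamma)^{2}$ and, along the relevant branch $\gamma\ge1/\kappa$, the clean identity $\beta_1(p,q)=\tfrac{\kappa}{2}\gamma^{2}$; this is exactly the exponent for which $\mathcal{L}_{p,q}$ admits a separated, Beliaev--Smirnov--type solution built from $1-|z|^{2}$ and $|1-z|$. We take as ansatz
\begin{equation*}
\underline{N}(z)=(1-|z|^{2})^{-\beta_1(p,q)}\,\Psi(z),
\end{equation*}
with $\Psi$ an explicit positive combination of a power of $|1-z|$ and an angular profile (possibly of the concentrated scaling form adapted to the prime end at which $f\to\infty$, the geometric source of the phase $\beta_1$), substitute it into $\mathcal{L}_{p,q}$, and impose $\mathcal{L}_{p,q}\underline N\le0$. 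After dividing out the radial factor this reduces to a pointwise inequality on the circle whose solvability by a strictly positive $\Psi$ is controlled by the sign of a discriminant--type quantity in $(\kappa,p,q)$; one then checks that this sign condition is precisely the membership $(p,q)\in\overline{\mathcal{I}}$. Outside $\mathcal{I}$ the angular profile degenerates and no such subsolution exists, which is exactly why the argument of \cite{DHLZ2018} had to stop at $\mathcal{R}$.

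With $\underline N$ available, we run the comparison principle for $\mathcal{L}_{p,q}$ on the annuli $\{r_0\le|z|<r\}$, $r_0<r<1$. On the inner circle $|z|=r_0$ the moment $N$ is continuous and strictly positive, so $c\,\underline N\le N$ there once $c$ is small. As $|z|\to1$ the ordering could only fail near the boundary points where $N$ develops its worst singularities --- the growing tip of the SLE trace and the prime end at which $f$ tends to $\infty$; choosing the power of $|1-z|$ (and the scaling profile at that prime end) so that $\underline N$ has at most the locally admissible singularity there --- this is where the inequalities cutting out $\mathfrak{D}_1$, placing $(p,q)$ below $D_1$ and on the $\beta_1$--side of the blue quartic, are used together with those cutting out $\mathcal{I}$ --- guarantees that $c\,\underline N-N$ cannot attain a positive interior maximum on the annulus. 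The comparison principle then yields $N\ge c\,\underline N$ on all of $\{r_0\le|z|<1\}$, so
\begin{equation*}
M(r)\ \ge\ c\int_{0}^{2\pi}\underline N(r,\theta)\,r\,d\theta\ \gtrsim\ (1-r)^{-\beta_1(p,q)},
\end{equation*}
and therefore $\beta(p,q)\ge\beta_1(p,q)$. Combined with the upper bound from \cite{DHLZ2018} this gives $\beta(p,q)=\beta_1(p,q)$ on $\mathfrak{D}_1\cap\mathcal{I}$, and the corresponding statement for the $m$--fold spectra follows immediately from the relation \eqref{eq_relation_gener_spec_m fold and standard_intro}.

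The main obstacle is the bookkeeping in the two middle steps: substituting the ansatz into $\mathcal{L}_{p,q}$ and showing that the solvability of the resulting angular inequality by a positive profile is exactly the condition $(p,q)\in\overline{\mathcal{I}}$; and then dominating $\underline N$ by the true moment near the singular prime ends, which forces one to combine the defining inequalities of $\mathfrak{D}_1$ and of $\mathcal{I}$ and to exclude logarithmic corrections to $M(r)$. The remaining estimates should be routine adaptations of the arguments in \cite{BS2009,DNNZ2015,BDZ2017,DHLZ2018}.
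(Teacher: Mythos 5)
There is a genuine gap: you have the logical structure of the theorem inverted. The half you take as already known from \cite{DHLZ2018} --- the upper bound $\beta(p,q)\le\beta_1(p,q)$ on $\mathfrak{D}_1\cap\mathcal{I}$ --- is precisely the new content of this paper, and it is \emph{not} available there outside the old validity zone: Proposition \ref{lower bound} gives the domination $\beta\le\beta_1$ only in the right exterior wedge $\mathcal{E}_+$ of the red parabola, which is disjoint from $\mathcal{I}$, while in $\mathcal{E}_-\cup\mathcal{I}$ it gives the \emph{lower} bound $\beta_1\le\beta$. Conversely, the half you set out to prove --- the lower bound $\beta\ge\beta_1$ on $\mathfrak{D}_1\cap\mathcal{I}$ --- is exactly what Proposition \ref{lower bound} already provides (via the single test function $\psi=(1-z\bar z)^{-\beta(\gamma_1)}u^{\gamma_1}$, using $C(\gamma_1)>0$ in $\mathcal{E}_-\cup\mathcal{I}$), and your ansatz $\underline N=(1-|z|^2)^{-\beta_1}\Psi$ with $\Psi$ a power of $|1-z|$ essentially reconstructs that known argument. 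If the upper bound were really available from \cite{DHLZ2018} on all of $\mathfrak{D}_1\cap\mathcal{I}$, the theorem would follow in one line from Proposition \ref{lower bound} and there would be nothing left to prove.

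The actual difficulty, which your proposal does not address, is producing a comparison function that bounds the moment from \emph{above} with exponent $\beta(\gamma_1)$ in the part of $\mathfrak{D}_1\cap\mathcal{I}$ beyond the previously treated region. A single standard test function cannot do this there: the three requirements (positivity, integral means exponent equal to $\beta(\gamma_1)$, and the correct sign of $\mathcal{P}(D)(\psi l_\delta)$, i.e.\ $C(\gamma)<0$) are all tied to the one parameter $\gamma$ and become incompatible --- this is the obstruction that stopped \cite{DHLZ2018} at the red parabola, and it is the same phenomenon the paper's closing remark isolates. The paper's way around it is the \emph{mixed} test function $\psi_0+\psi_1$ of \eqref{mixed testf gamma1}, built from two different exponents $\gamma$ and $\gamma_1$: the second summand fixes the integral means exponent at $\beta(\gamma_1)$ and the positivity (Lemma \ref{lem positivity}), while the first summand, chosen with $\gamma'_1<\gamma<\min(\gamma_0,\gamma_1,\gamma'_1+2/\kappa,\gamma_{lin})$ zone by zone, controls the sign of $\mathcal{P}(D)$ near $z=1$ (Proposition \ref{prop mixed testf_gamma_1}). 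This two-parameter decoupling is the new idea of the paper and is entirely absent from your proposal; without it, the comparison argument you describe cannot close.
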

The validity zone of the conjecture is thus extended up to the part of the \textcolor{red}{red} parabola located inside the domain $\mathfrak{D}_1$ (see Figure \ref{fig_new_validity_zone}).
\begin{figure}[httb]
	\centering
	\begin{subfigure}{0.5\textwidth}
		\centering
		\includegraphics[width=0.9\linewidth]{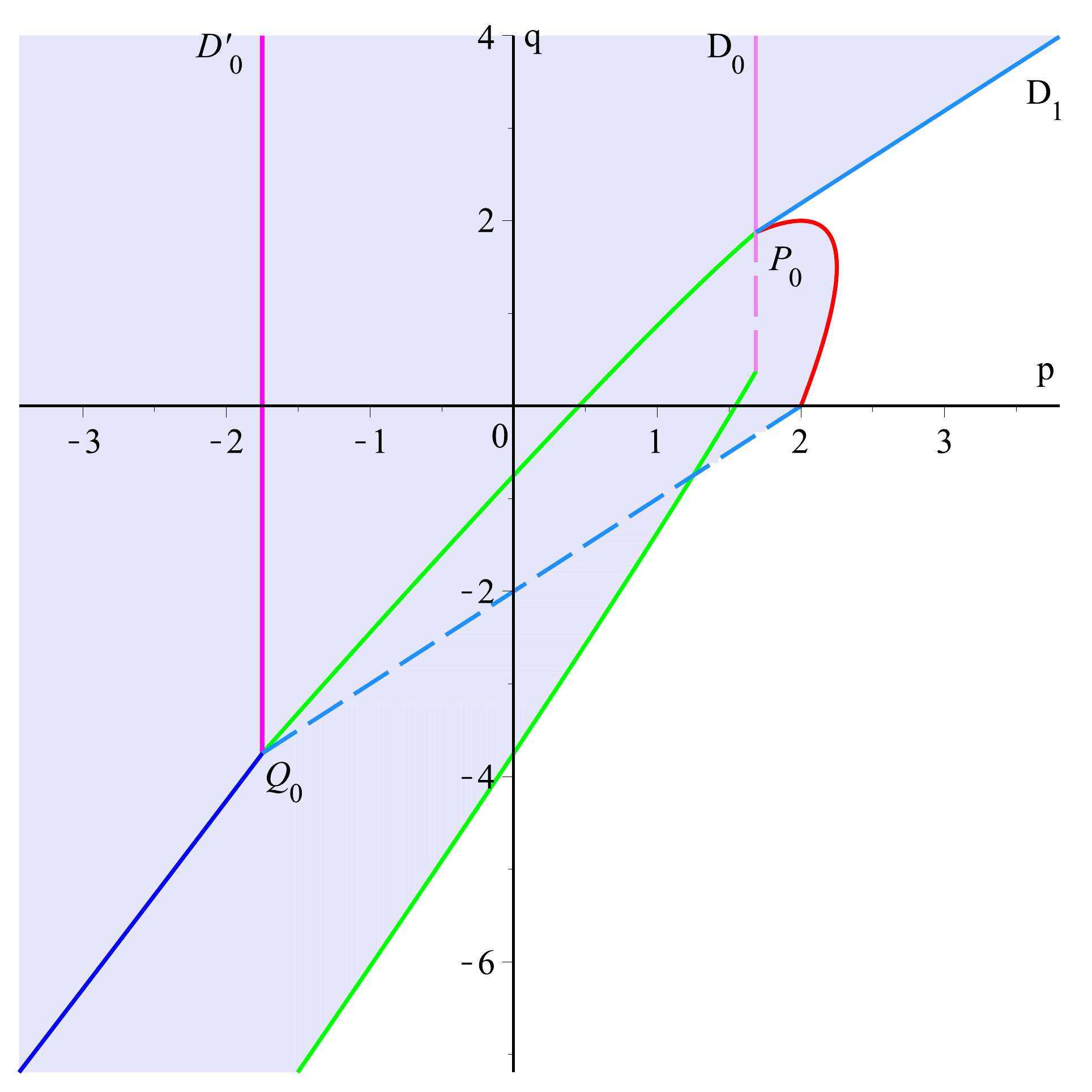}
		\caption{Zone of validity before this work}
		\label{fig_old_validity_zone}
	\end{subfigure}%
	\begin{subfigure}{0.5\textwidth}
		\centering
		\includegraphics[width=0.9\linewidth]{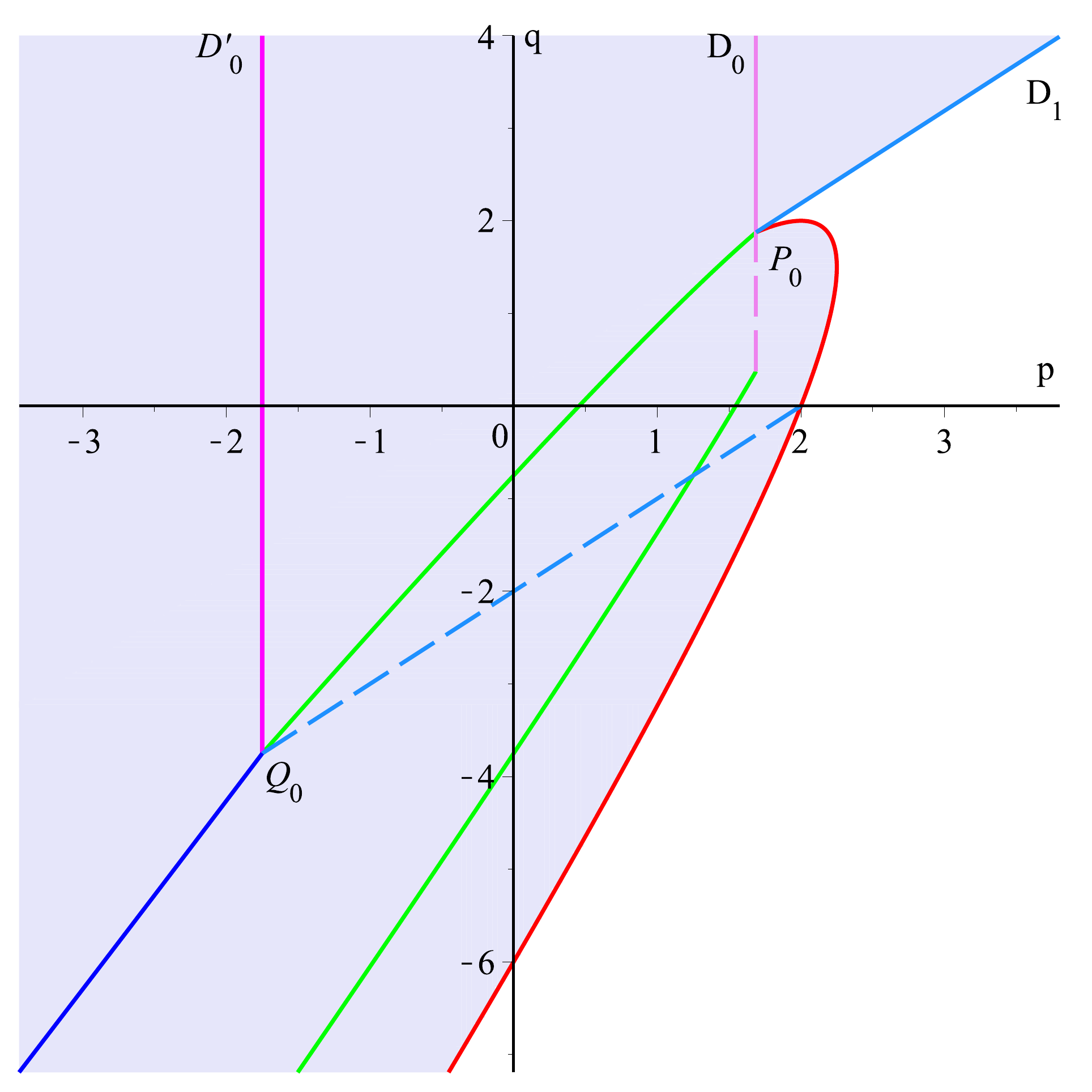}
		\caption{New zone of validity}
		\label{fig_new_validity_zone}
	\end{subfigure}
	\caption{{\it Validity zones  of the conjecture about $\beta(p,q)$: The figures illustrate the case $\kappa=2$.}}
	\label{whpl2}
\end{figure}

 The following corollary extends the domain of parameters for which the usual integral means spectrum of the interior whole-plane SLE is rigorously computed
 \begin{cor}
 	For $p\in [p^{*}(\kappa),p(\kappa)]$, where $p^{*}(\kappa)$ and $p(\kappa)$ are defined by \eqref{p*}, \eqref{p intersec red-ox}, the average integral means spectrum \eqref{average int means spec} of whole-plane SLE is given by
 	\begin{equation}
 	\beta_1(p,0)=3p-\frac{1}{2}-\frac{1}{2}\sqrt{1+2\kappa p}.
 	\end{equation}
 \end{cor}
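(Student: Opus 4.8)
The plan is to read the corollary off Theorem~\ref{thm_main_theorem} by specializing to the horizontal line $\{q=0\}$ of the $(p,q)$--plane. On that line $\beta(p,0)$ is precisely the usual average integral means spectrum \eqref{average int means spec}, and $\beta_1(p,q)$ collapses to $\beta_1(p,0)=3p-\frac{1}{2}-\frac{1}{2}\sqrt{1+2\kappa p}$. So the task reduces to checking that the segment $\{(p,0):p^*(\kappa)\le p\le p(\kappa)\}$ sits inside the closure of $\mathfrak D_1\cap\mathcal I$, with its interior landing in $\mathfrak D_1\cap\mathcal I$ itself.

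The first step is to intersect $\{q=0\}$ with the curves delimiting $\mathfrak D_1\cap\mathcal I$. Solving $q_\mathcal{R}(\gamma)=0$ gives $\gamma\in\{0,\ 3/\kappa+1/2\}$, so the red parabola $\mathcal R$ meets $\{q=0\}$ exactly at $(0,0)$ and at $(p(\kappa),0)$; since $\mathcal R$ bends towards the lower left and has a vertex of positive ordinate, the open segment $\{(p,0):0<p<p(\kappa)\}$ lies in the interior $\mathcal I$. Along the left branch of the green parabola $\mathcal G$ the ordinate $q_\mathcal{G}$ is strictly monotone, so that branch crosses $\{q=0\}$ at the single point $(p^*(\kappa),0)$, and $(p,0)$ lies strictly to the right of it exactly when $p>p^*(\kappa)$. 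The point $Q_0$ has negative ordinate, so the blue quartic $\mathcal Q$ never interferes along $\{q=0\}$, while the line $D_1$ meets $\{q=0\}$ at $p=(\kappa^2-16)/(32\kappa)$, which one checks stays below $p^*(\kappa)$ for all $\kappa>0$, so the whole segment lies below $D_1$. Together with $0<p^*(\kappa)<p(\kappa)$ (already implicit in the results along $\{q=0\}$ recalled in the introduction), this gives $\{(p,0):p^*(\kappa)<p<p(\kappa)\}\subset\mathfrak D_1\cap\mathcal I$, and Theorem~\ref{thm_main_theorem} then yields $\beta(p,0)=\beta_1(p,0)$ on that open interval.

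To include the two endpoints I would invoke convexity of $p\mapsto\beta(p,0)$: the integrand $\mathbb{E}(|f'(z)|^p)$ in \eqref{average int means spec} is log--convex in $p$ by Hölder's inequality, hence so is its integral over $r\partial\D$ and so is the associated logarithmic growth exponent, which is in particular continuous on the interior of the interval on which it is finite. Letting $p\uparrow p(\kappa)$ and $p\downarrow p^*(\kappa)$ and using the continuity of $p\mapsto\beta_1(p,0)$ then gives the asserted formula at $p=p(\kappa)$ and $p=p^*(\kappa)$ as well; at $p^*(\kappa)$ one may alternatively combine the known value $\beta(p,0)=\beta_0(p)$ for $p\le p^*(\kappa)$ with the identity $\beta_0(p^*(\kappa))=\beta_1(p^*(\kappa),0)$, which holds because $(p^*(\kappa),0)$ lies on $\mathcal G$.

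The only genuinely delicate step is the geometric verification in the second paragraph: one must be sure that the slice $\{q=0\}$ enters $\mathfrak D_1$ through $\mathcal G$ at abscissa $p^*(\kappa)$ and leaves $\mathcal I$ through $\mathcal R$ at abscissa $p(\kappa)$ without being clipped by $D_1$ or $\mathcal Q$ in between. Each of these is an explicit inequality in $\kappa$, and the resulting configuration is exactly the one pictured in Figure~\ref{fig_new_validity_zone}.
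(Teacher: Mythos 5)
Your proposal is correct and is exactly the derivation the paper intends: the paper gives no separate proof of this corollary, treating it as the restriction of Theorem~\ref{thm_main_theorem} to the line $q=0$, and your geometric verification (the red parabola meets $q=0$ at $p=0$ and $p=p(\kappa)$, the left branch of $\mathcal G$ meets it at $p=p^*(\kappa)$, and $D_1$ and $\mathcal Q$ do not clip the segment) is the content that the paper leaves implicit. The only point worth making explicit in the endpoint argument is that continuity of the convex function $p\mapsto\beta(p,0)$ at $p=p(\kappa)$ requires finiteness slightly beyond $p(\kappa)$, which is supplied by the upper bound $\beta(p,q)\le\beta_1(p,q)$ of Proposition~\ref{lower bound} (or Lemma~\ref{upper bound}); your alternative treatment of the left endpoint via $\beta_0(p^*)=\beta_1(p^*,0)$ on $\mathcal G$ is already complete.
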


Theorem \ref{thm_main_theorem} also improves the results obtained in \cite{DHLZ2018} on the generalized integral means spectrum associated to the m--fold transform of the whole--plane SLE map (see Figure \ref{fig_m-fold_new_validity_zone}). Let $\mathfrak{D}_1^{[m]}$ and $\mathcal{I}^{[m]}$ be the images of the domains $\mathfrak{D}_1$, $\mathcal{I}$ under the inverse transformation $T_m^{-1}(p,q)=(p,(1-m)p+mq)$ of the endomorphism of $\R^2$ given by $T_m(p,q)=(p,q_m)$, where $q_m$ is defined in \eqref{eq_relation_gener_spec_m fold and standard_intro}.
\begin{cor}
		In the domain $\mathfrak{D}_1^{[m]}\cap\mathcal{I}^{[m]}$, the average generalized integral means spectrum $\beta^{[m]}(p,q)$ associated to the m--fold transform of the whole--plane SLE map $f=f_0$ is given by
	\begin{equation}
	\beta_1^{[m]}(p,q)=\beta_1(p,q_m)=\bigg(1+\frac{2}{m}\bigg)p-\frac{2}{m}q-\frac{1}{2}-\frac{1}{2}\sqrt{1+\frac{2\kappa}{m}(p-q)}.
	\end{equation}
\end{cor}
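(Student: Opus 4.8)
The plan is to obtain this corollary as a formal consequence of Theorem \ref{thm_main_theorem} together with the change of variables \eqref{eq_relation_gener_spec_m fold and standard_intro}; the only point that needs checking is the bookkeeping for the domains. The map $T_m(p,q)=(p,q_m)$ is linear with determinant $1/m\neq 0$, hence an automorphism of $\R^2$ whose inverse is precisely $T_m^{-1}(p,q)=(p,(1-m)p+mq)$. Since images under the bijection $T_m^{-1}$ commute with intersection, $\mathfrak{D}_1^{[m]}\cap\mathcal{I}^{[m]}=T_m^{-1}\bigl(\mathfrak{D}_1\cap\mathcal{I}\bigr)$; equivalently, $(p,q)\in\mathfrak{D}_1^{[m]}\cap\mathcal{I}^{[m]}$ if and only if $(p,q_m)\in\mathfrak{D}_1\cap\mathcal{I}$. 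Concretely this amounts to translating the inequalities cutting out $\mathfrak{D}_1$ (position relative to the line $D_1$, the green parabola $\mathcal{G}$, and the blue quartic $\mathcal{Q}$) and cutting out $\mathcal{I}$ (interior of the red parabola $\mathcal{R}$) through the substitution $q\mapsto q_m$.

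Granting this, the argument is immediate: fix $(p,q)\in\mathfrak{D}_1^{[m]}\cap\mathcal{I}^{[m]}$, so $(p,q_m)\in\mathfrak{D}_1\cap\mathcal{I}$. By \eqref{eq_relation_gener_spec_m fold and standard_intro} one has $\beta^{[m]}(p,q)=\beta(p,q_m)$, and Theorem \ref{thm_main_theorem} evaluated at $(p,q_m)$ gives $\beta(p,q_m)=\beta_1(p,q_m)$. It then remains only to substitute $q_m=(1-1/m)p+q/m$ into $\beta_1$: one has $p-q_m=(p-q)/m$ and $3p-2q_m=(1+2/m)p-(2/m)q$, whence
\[
\beta_1(p,q_m)=\Bigl(1+\tfrac{2}{m}\Bigr)p-\tfrac{2}{m}q-\tfrac12-\tfrac12\sqrt{1+\tfrac{2\kappa}{m}(p-q)},
\]
which is the asserted formula for $\beta_1^{[m]}(p,q)$.

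In other words, the corollary carries no genuine difficulty of its own: the entire analytic weight lies in Theorem \ref{thm_main_theorem}, whose proof is the one place where the real obstacle sits — adapting the Beliaev--Smirnov PDE together with the sub-/super-solution and maximum-principle arguments of \cite{BS2009,DNNZ2015,DHLZ2018} so as to pin down $\beta(p,q)=\beta_1(p,q)$ on all of $\mathfrak{D}_1\cap\mathcal{I}$ rather than on the smaller previously known zone. Within the corollary the only care required is the elementary verification that the inequalities defining $\mathfrak{D}_1^{[m]}$ and $\mathcal{I}^{[m]}$ are exactly the $T_m$-pullbacks of those defining $\mathfrak{D}_1$ and $\mathcal{I}$; as a sanity check, $m=1$ returns $\beta_1(p,q)$, and the slice $q=0$ is consistent with the preceding corollary on the interior whole--plane SLE spectrum.
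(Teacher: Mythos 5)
Your argument is correct and is exactly the intended derivation: the paper states this corollary without a separate proof, as an immediate consequence of Theorem \ref{thm_main_theorem}, the relation $\beta^{[m]}(p,q)=\beta(p,q_m)$ from \eqref{eq_relation_gener_spec_m fold and standard_intro}, and the definition of $\mathfrak{D}_1^{[m]}$, $\mathcal{I}^{[m]}$ as images under $T_m^{-1}$. Your domain bookkeeping and the substitution $p-q_m=(p-q)/m$, $3p-2q_m=(1+2/m)p-(2/m)q$ check out.
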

\begin{figure}[httb]
	\centering
	\begin{subfigure}{0.5\textwidth}
		\centering
		\includegraphics[width=0.9\linewidth]{./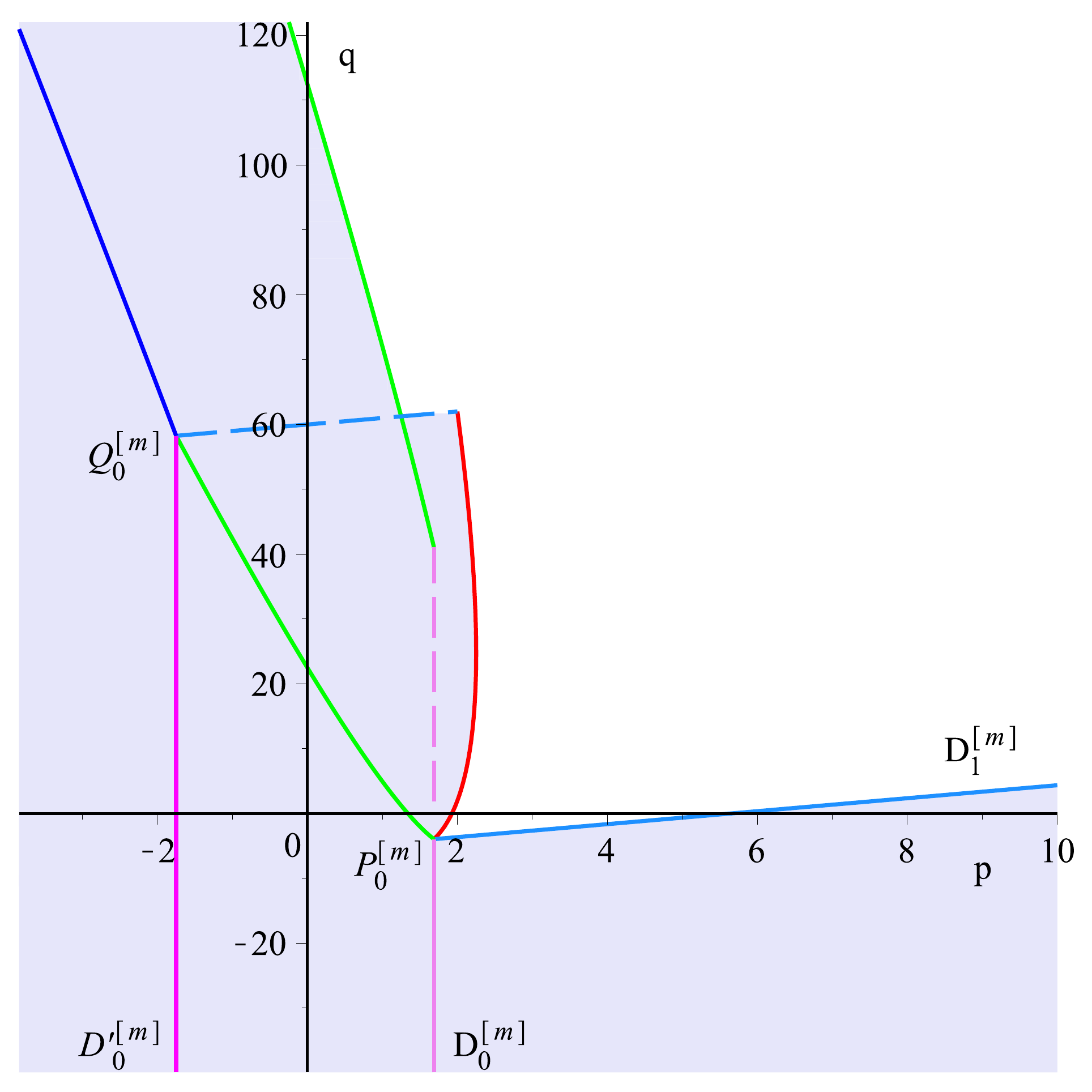}
		\caption{Zone of validity before this work}
		\label{fig_m-fold_old_validity_zone}
	\end{subfigure}%
	\begin{subfigure}{0.5\textwidth}
		\centering
		\includegraphics[width=0.9\linewidth]{./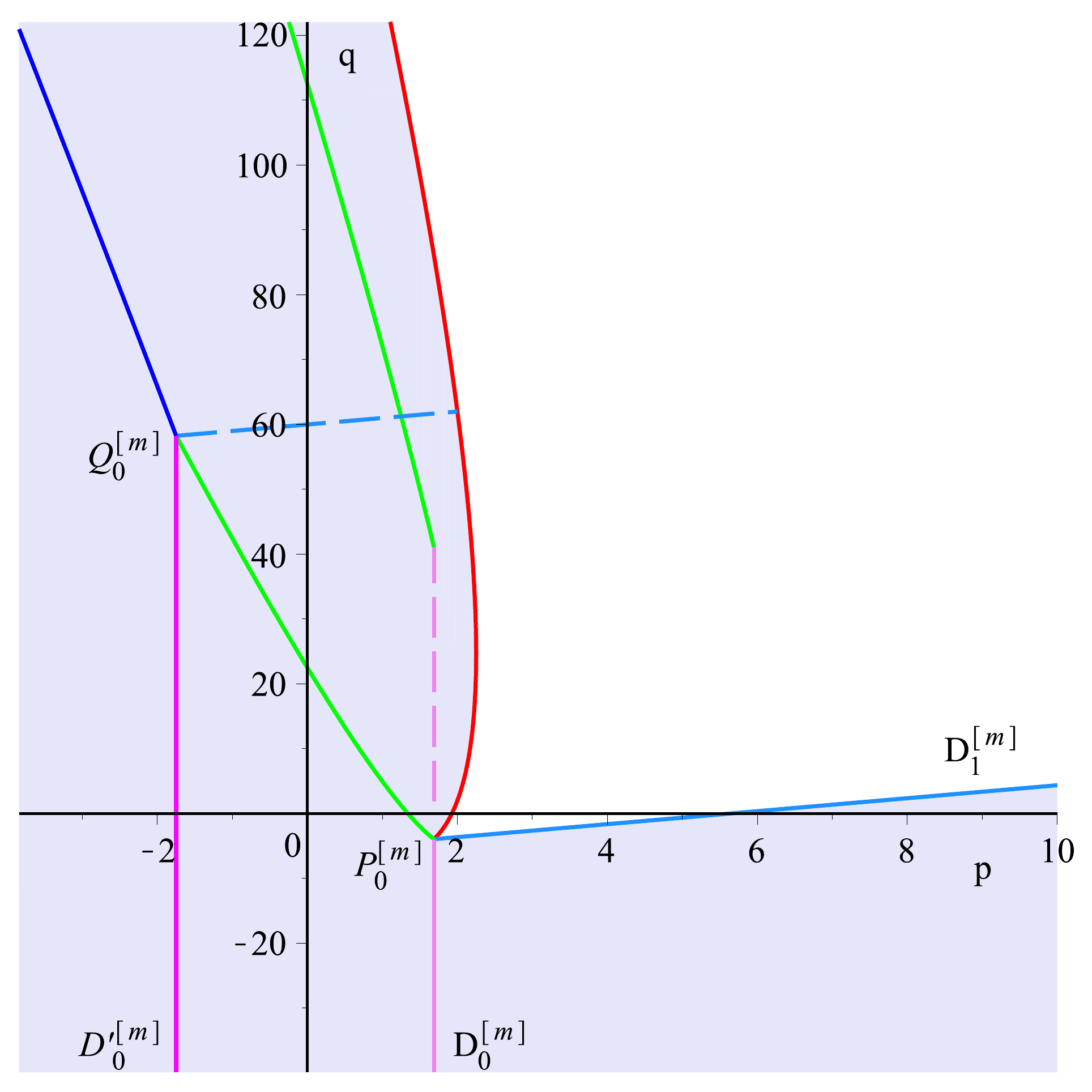}
		\caption{New zone of validity}
		\label{fig_m-fold_new_validity_zone}
	\end{subfigure}
	\caption{{\it Validity zones  of the conjecture about the spectrum $\beta^{[m]}(p,q)$: The figures illustrate the case $\kappa=2, m=-30$.}}
	\label{whpl2}
\end{figure}

\subsection{Schramm--Loewner Evolution $\SLE_\kappa$} \label{section intro SLE}
\begin{defn}
	The {\it Whole--Plane Schramm--Loewner evolution} (of parameter $\kappa$), denoted by $\SLE_\kappa$, in the unit disc is the solution of the stochastic PDE:
	\begin{equation}\label{loewner whole-plane-interior}
	\frac{\partial f_t(z)}{\partial t}=z\frac{\partial f_t(z)}{\partial z}\frac{\lambda(t)+z}{\lambda(t)-z},\hspace{0.5cm} \lim_{t\to +\infty}f_t(e^{-t}z)=z.
	\end{equation}
	where $\lambda(t):=e^{i\sqrt{\kappa}B_t}$ with $B_t$ being the standard one dimensional Brownian motion and $\kappa$ being a nonnegative parameter.
\end{defn}
There is another variant of SLE, called {\it{radial Schramm--Loewner evolution}}.
\begin{defn}
	The {\it Radial Schramm--Loewner evolution} (of parameter $\kappa$), in the unit disc is the solution of the stochastic PDE:
	\begin{equation}\label{eq interior whole-plane SLE f}
	\frac{\partial f_t(z)}{\partial t}=-z\frac{\partial f_t(z)}{\partial z}\frac{\lambda(t)+z}{\lambda(t)-z}, \hspace{0.3cm} f_0(z)=z,
	\end{equation}
\end{defn}
It has been proved that the radial SLE map $f_t$ has the same law as the solution $\tilde f_t$  of the following stochastic ODE \cite{DNNZ2015,DHLZ2018}
\begin{equation}\label{reverseeq}
\partial_t\tilde f_t(z)=\tilde f_t(z)\frac{\tilde f_t(z)+\lambda(t)}{\tilde f_t(z)-\lambda(t)},\,\,\,\tilde f_0(z)=z.
\end{equation}
We have the two following properties of radial SLE map $\tilde{f}_t$: The first one is a kind of Markov property and the second one is a relation between the radial SLE and the whole--plane SLE \cite{DNNZ2015,DHLZ2018}
\begin{lem}(Markov property of radial SLE map)
	\begin{equation}\label{markov-radialSLE}
	\tilde{f}_t(z)=\lambda(s)\tilde{f}_{t-s}(\tilde{f}_{s}(z)/\lambda(s)).
	\end{equation}
\end{lem}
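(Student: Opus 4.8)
The plan is to prove \eqref{markov-radialSLE} first as a purely pathwise identity in the driving function $\lambda$, and only afterwards read off the probabilistic Markov property. The pathwise statement rests on two elementary facts about the flow \eqref{reverseeq}: the cocycle (semigroup) property of solutions of an ODE, and the homogeneity of the radial Loewner vector field $V(w,\mu):=w\frac{w+\mu}{w-\mu}$, namely $V(aw,a\mu)=a\,V(w,\mu)$ for any $a\neq0$. The probabilistic content is then immediate once one observes that the ``shifted'' driver that appears is again a radial $\SLE_\kappa$ driver, independent of the past.

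Concretely, I would fix $s\ge0$ and introduce the flow restarted at time $s$: let $g_u$ solve $\partial_u g_u(w)=V\!\big(g_u(w),\lambda(s+u)\big)$ with $g_0=\mathrm{id}$. \emph{Step 1 (cocycle).} For fixed $z$, the two curves $u\mapsto\tilde f_{s+u}(z)$ and $u\mapsto g_u(\tilde f_s(z))$ both solve \eqref{reverseeq} with driver $\lambda(s+\cdot)$ and agree at $u=0$; by uniqueness of solutions they coincide, so $\tilde f_{s+u}(z)=g_u(\tilde f_s(z))$ and in particular $\tilde f_t(z)=g_{t-s}(\tilde f_s(z))$. \emph{Step 2 (covariance).} Set $\hat\lambda(u):=\lambda(s+u)/\lambda(s)$, so that $\hat\lambda(0)=1$ and, since $\lambda(u)=e^{i\sqrt\kappa B_u}$, $\hat\lambda(u)=e^{i\sqrt\kappa(B_{s+u}-B_s)}$; and let $H_u(v):=\lambda(s)^{-1}g_u(\lambda(s)v)$. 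Substituting $g_u(\lambda(s)v)=\lambda(s)H_u(v)$ into the equation for $g_u$ and using $V(\lambda(s)H,\lambda(s)\hat\lambda)=\lambda(s)V(H,\hat\lambda)$ gives $\partial_u H_u(v)=V\!\big(H_u(v),\hat\lambda(u)\big)$ with $H_0=\mathrm{id}$, i.e. $H_\bullet$ is exactly the solution $\tilde f^{\hat\lambda}$ of \eqref{reverseeq} with $\lambda$ replaced by $\hat\lambda$. Hence $g_u(w)=\lambda(s)\,\tilde f^{\hat\lambda}_u(w/\lambda(s))$, and combining with Step 1 at $u=t-s$, $w=\tilde f_s(z)$,
\[
\tilde f_t(z)=g_{t-s}(\tilde f_s(z))=\lambda(s)\,\tilde f^{\hat\lambda}_{t-s}\!\big(\tilde f_s(z)/\lambda(s)\big),
\]
which is \eqref{markov-radialSLE} once the $\tilde f_{t-s}$ on the right is understood as driven by the shifted driver $\hat\lambda$. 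Finally, since $u\mapsto B_{s+u}-B_s$ is a standard Brownian motion independent of $\mathcal{F}_s=\sigma(B_r:r\le s)$, $\hat\lambda$ has the law of $\lambda$ and is independent of $\tilde f_s$; this is the form of the property used downstream (to factor expectations of functionals of $\tilde f_t$ into a ``past'' factor depending on $\tilde f_s$ and an independent ``future'' factor).

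The step requiring the most care — rather than a real obstacle — is the well-posedness input underlying both Step 1 and the very definition of $\tilde f_t$: one must invoke uniqueness of solutions of \eqref{reverseeq} on the relevant region and time interval, i.e. that the flows do not reach the pole $w=\lambda(\cdot)$ (equivalently, work in the subdomain of $\D$ on which the radial $\SLE_\kappa$ maps are well-defined conformal maps). This is precisely the standard existence/uniqueness statement recalled from \cite{DNNZ2015,DHLZ2018} behind the passage from \eqref{eq interior whole-plane SLE f} to \eqref{reverseeq}; once it is quoted, everything else is the ODE cocycle identity together with the one-line homogeneity computation $V(aw,a\mu)=aV(w,\mu)$ above.
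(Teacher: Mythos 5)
The paper does not prove this lemma; it is quoted from \cite{DNNZ2015,DHLZ2018}, so there is no in-paper argument to compare against. Your proof is correct and is the standard derivation: the cocycle identity for the flow of \eqref{reverseeq}, the rotation covariance $V(aw,a\mu)=aV(w,\mu)$ of the radial Loewner field, and the independent, stationary increments of $B$ to identify the law of the shifted driver $\hat\lambda(u)=e^{i\sqrt\kappa(B_{s+u}-B_s)}$. You are also right to flag that the displayed identity only holds pathwise if the $\tilde f_{t-s}$ on the right is understood as driven by $\hat\lambda$ (equality with the original family holds in law, jointly with independence from $\mathcal{F}_s$), which is exactly how the lemma is used downstream; and the well-posedness input is harmless here since for \eqref{reverseeq} the flow decreases $\vert\tilde f_t(z)\vert$ and so never meets the singularity $\lambda(t)\in\partial\D$.
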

\begin{lem}\label{lemmefond}
	The limit in law, $\lim_{t\to +\infty} e^{t}\tilde{f}_t(z)$,  exists and has the {\it same law} as the (time 0) interior whole--plane random map $f_0(z)$:
	\begin{equation}
		\lim_{t\to +\infty} e^t \tilde f_t(z)\stackrel{\rm (law)}{=} f_0(z).
	\end{equation}
\end{lem}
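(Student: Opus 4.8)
The plan is to prove the stronger \emph{pathwise} statement: if $\tilde f_t$ and the whole--plane chain $(f_t)_{t\ge0}$ of \eqref{loewner whole-plane-interior} are built from the \emph{same} driving function $\lambda$, then $\lim_{t\to+\infty}e^t\tilde f_t(z)$ exists almost surely and equals $f_0(z)$; passing to laws then gives the lemma. The argument has three ingredients: exponential convergence of $e^t\tilde f_t(z)$, a pathwise functional identity relating $\tilde f_t$ to the whole--plane map, and a Koebe--type passage to the limit in that identity.

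First I would establish convergence. Fix $z\in\D\setminus\{0\}$. Differentiating \eqref{reverseeq} gives $\frac{d}{dt}\log|\tilde f_t(z)|^2 = 2\,\mathrm{Re}\,\frac{\tilde f_t(z)+\lambda(t)}{\tilde f_t(z)-\lambda(t)} = 2\,\frac{|\tilde f_t(z)|^2-1}{|\tilde f_t(z)-\lambda(t)|^2}$, which is $<0$ while $|\tilde f_t(z)|<1$; since $|\tilde f_t(z)|$ is then non--increasing and $|\tilde f_t(z)-\lambda(t)|\le 1+|z|$, this derivative is bounded above by $2\frac{|z|-1}{|z|+1}<0$, so $|\tilde f_t(z)|\le|z|\,e^{-ct}$ with $c=\frac{1-|z|}{1+|z|}$. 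Consequently $\frac{d}{dt}\log\!\big(e^t\tilde f_t(z)\big)=1+\frac{\tilde f_t(z)+\lambda(t)}{\tilde f_t(z)-\lambda(t)}=\frac{2\tilde f_t(z)}{\tilde f_t(z)-\lambda(t)}$ has modulus $\le\frac{2|z|}{1-|z|}e^{-ct}$, integrable on $[0,\infty)$; hence $\log\!\big(e^t\tilde f_t(z)\big)$ converges a.s.\ as $t\to+\infty$, and so does $e^t\tilde f_t(z)$, to some limit $L(z)$. As $\tilde f_t$ is the holomorphic flow of a holomorphic vector field fixing $0$, it is univalent with $\tilde f_t(0)=0$, $\tilde f_t'(0)=e^{-t}$, so $e^t\tilde f_t\in\mathcal S$; normality of $\mathcal S$ and Hurwitz's theorem then make the convergence locally uniform and give $L\in\mathcal S$.

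Next I would prove the pathwise identity $f_t\big(\tilde f_t(z)\big)=f_0(z)$ for all $t\ge0$, $z\in\D$. Differentiating, $\partial_t\big[f_t(\tilde f_t(z))\big]=(\partial_t f_t)(\tilde f_t(z))+f_t'(\tilde f_t(z))\,\partial_t\tilde f_t(z)$; substituting the right--hand sides of \eqref{loewner whole-plane-interior} and \eqref{reverseeq} and using $\frac{a+\lambda}{a-\lambda}=-\frac{\lambda+a}{\lambda-a}$, the two terms cancel exactly, so $t\mapsto f_t(\tilde f_t(z))$ is constant, equal to $f_0(\tilde f_0(z))=f_0(z)$. (This is the flow/cocycle mechanism that also underlies the Markov property \eqref{markov-radialSLE}.) To conclude, since $f_t(0)=0$ and $f_t'(0)=e^t$, the map $e^{-t}f_t$ lies in $\mathcal S$, so the growth and distortion theorems give $|f_t(w)-e^t w|\le C\,e^t|w|^2$ for $|w|\le 1/2$ with $C$ absolute; taking $w=\tilde f_t(z)\to0$, $\big|f_0(z)-e^t\tilde f_t(z)\big|=\big|f_t(\tilde f_t(z))-e^t\tilde f_t(z)\big|\le C\,\big|e^t\tilde f_t(z)\big|\,|\tilde f_t(z)|\to C\,|L(z)|\cdot0=0$, so $L(z)=f_0(z)$ pathwise, which is the claim.

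The step I expect to be the real obstacle is the functional identity, not through the (short) computation but through its prerequisites: one must know that the whole--plane chain $(f_t)_{t\ge0}$ solving \eqref{loewner whole-plane-interior} with the normalization imposed at $t=+\infty$ is genuinely well defined as a measurable functional of $\lambda|_{[0,\infty)}$ (this is the standard existence theory for whole--plane SLE and should be cited) and is driven by \emph{literally} the same $\lambda$ as $\tilde f_t$; one also has to keep track of $\tilde f_t(z)\in\D$ throughout and of the nesting $\tilde f_t(\D)\subset\D$, $f_0(\D)\subset f_t(\D)$ that makes all the compositions legitimate. The convergence and limit--passage steps are, by contrast, routine Gr\"onwall-- and Koebe--type estimates.
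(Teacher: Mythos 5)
Your argument is correct, but note that the paper does not actually prove Lemma \ref{lemmefond}: it is quoted from \cite{DNNZ2015,DHLZ2018}, where the statement is established as an identity \emph{in law}, obtained by relating the ODE flow \eqref{reverseeq} to the radial Loewner PDE through time reversal of the driving Brownian motion and the Markov property \eqref{markov-radialSLE}. You take a genuinely different, and in fact stronger, route: you exhibit $\tilde f_t$ as the transition function of the whole--plane subordination chain, i.e.\ the pathwise identity $f_t\circ\tilde f_t=f_0$ (your cancellation computation is exactly the statement that $\varphi_{0,t}=f_t^{-1}\circ f_0$ solves $\partial_t\varphi=\varphi\frac{\varphi+\lambda}{\varphi-\lambda}$, which is \eqref{reverseeq}), and then pass to the limit with the Koebe bound $|f_t(w)-e^tw|\le Ce^t|w|^2$. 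All the individual steps check out: the monotonicity $\mathrm{Re}\frac{a+\lambda}{a-\lambda}=\frac{|a|^2-1}{|a-\lambda|^2}$ gives $|\tilde f_t(z)|\le|z|e^{-ct}$, the logarithmic derivative $\frac{2\tilde f_t}{\tilde f_t-\lambda}$ is then integrable in $t$, and the final estimate $|f_0(z)-e^t\tilde f_t(z)|\le C|e^t\tilde f_t(z)|\,|\tilde f_t(z)|\to0$ is sound. What your approach buys is an almost sure, locally uniform convergence rather than convergence in law; what it costs is the prerequisite you correctly identify, namely that the chain $(f_t)_{t\ge0}$ of \eqref{loewner whole-plane-interior}, normalized at $t=+\infty$, is a well-defined univalent-valued functional of $\lambda|_{[0,\infty)}$ with $f_t'(0)=e^t$ — precisely the input that the time-reversal proof of \cite{DNNZ2015} is designed to avoid leaning on so directly. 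With that existence statement cited, your proof is complete and implies the lemma as stated.
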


Note that the Loewner chains can also be defined on the complement of the unit disc. Let us here give the definition of the {\it exterior} version of whole--plane SLE
\begin{defn}
	The {\it Whole--Plane Schramm--Loewner evolution} (of parameter $\kappa$), denoted by $\SLE_\kappa$, in the complement of the unit disc is the solution of the stochastic PDE:
	\begin{equation}\label{loewner whole-plane-exterior}
	\frac{\partial \hat{f}_t(\zeta)}{\partial t}=\zeta\frac{\partial \hat{f}_t(\zeta)}{\partial \zeta}\frac{\lambda(t)+\zeta}{\lambda(t)-\zeta}, \hspace{0.5cm} \lim_{t\to +\infty}\hat{f}_t(e^{t}z)=z,
	\end{equation}
	where $\lambda(t):=e^{i\sqrt{\kappa}B_t}$ with $B_t$ being the standard one dimensional Brownian motion and $\kappa$ being a nonnegative parameter.
\end{defn}
The exterior whole--plane $\SLE_\kappa$ map $\hat{f}_t$ and the interior whole--plane $\SLE_\kappa$ map $f_t$ are related by the inversion
\begin{equation}\label{eq inter-exter SLE relation}
	\hat{f}_t(\zeta)\stackrel{\rm (law)}{=}\frac{1}{f_t(\frac{1}{\zeta})}, \hspace{0.3cm}\zeta\in\Delta.
\end{equation}

\subsection{Average generalized integral means spectrum of whole--plane SLE}
It is showed in \cite{DHLZ2015arXiv,DHLZ2018} that by using a martingale method and Markov property \eqref{markov-radialSLE}, one can obtain a PDE satisfied by the mixed moment $\mathbb{E}(\vert z\vert^q\vert \tilde{f}'_t(z) \vert^p/\vert \tilde{f}_t(z) \vert^q)$ of the radial SLE map $\tilde{f}_t$. Lemma \ref{lemmefond} then follows to show that function $G(z,\bar{z})= \mathbb{E}(\vert z \vert ^q\vert f'(z) \vert^p/\vert f(z) \vert^q)$ is a solution of the following linear PDE
 \begin{align}\label{eq1zz}  \mathcal{P}(D)[G(z,\bar{z})]&=-\frac{\kappa}{2}(z\partial-\bar{z}\bar{\partial})^2G-\frac{1+z}{1-z}z\partial G-\frac{1+\bar{z}}{1-\bar{z}}\bar{z}\bar{\partial} G\\ \nonumber
 +\bigg[ &-\frac{p}{(1-z)^2}-\frac{p}{(1-\bar{z})^2}+\frac{q}{1-z}+\frac{q}{1-\bar{z}}+2p-2q \bigg]G=0.
 \end{align}
The solution space of this equation is one--dimensional and $G$ is the solution satisfying $G(0,0)=1$. Using polar coordinates ($z=re^{i\theta}$), we can rewrite this equation as
\begin{align}
	&\frac{\kappa}{2}G_{\theta,\theta}-\frac{2r\sin\theta}{r^2-2r\cos\theta+1}G_\theta+\frac{r(r^2-1)}{r^2-2r\cos\theta+1}G_r \label{eq_main_polar}\\
	+\bigg( -2p&\frac{1-2r\cos\theta +r^2(\cos^2\theta-\sin^2\theta)}{(1-2r\cos\theta+r^2)^2} +2q\frac{1-r\cos\theta}{1-2r\cos\theta+r^2}+2p-2q\bigg)G=0.\nonumber
\end{align}
Eq. \eqref{eq_main_polar} is parabolic and the variable $r$ plays the role of time in the heat equation. The singularity of \eqref{eq_main_polar} at $r=1$ corresponds to the singularity at infinity in the usual parabolic equations. Moreover, the coefficients of \eqref{eq_main_polar} have singularities at $(r,\theta)=(1,0)$. Coming back to the variables $z,\bar{z}$, we just look for the singular part of $G(z,\bar{z})$ as
\begin{equation}\label{singular part z, z bar}
(1-z\bar{z})^{-\beta}u^\gamma,\,\, \text{ where } u=(1-z)(1-\bar{z}).
\end{equation}
The average generalized integral means spectrum $\beta(p,q)$ is then given by
\begin{equation}
	\beta(p,q)=\left\{
	\begin{array}{ll}
	\beta & \gamma\geq-1/2 \\
	\beta-2\gamma-1 & \gamma<-1/2.
	\end{array} 
	\right. 
\end{equation} 

{\bf Spectrum functions.} It is so far showed in \cite{BDZ2016,BDZ2017,BS2009,DHLZ2015arXiv,DHLZ2018,DNNZ2011,DNNZ2015} that the values of $\beta$ and $\gamma$ are determined by particular quadratic functions. Let us now recall how these functions appeared. We use the same notations as in \cite{DHLZ2015arXiv,DHLZ2018}.
We focus on the solutions $G(z,\bar{z})$ of \eqref{eq1zz} of the form \eqref{singular part z, z bar}
\begin{equation}
	G(z,\bar{z}):=(1-z\bar{z})^{-\beta}u^\gamma,\,\, \text{ where } u=(1-z)(1-\bar{z}).
\end{equation}
The action of the differential operator $\mathcal{P}(D)$ on $G(z,\bar{z})$ is then
\begin{align}
	\mathcal{P}(D)(G(z,\bar{z}))=&2\bigg[\beta(p,\gamma)-\beta\bigg]\frac{z\bar{z}}{u}+C(p,\gamma)\bigg(\frac{1-z\bar{z}}{u}\bigg)^2\\
	&+[A(p,q,\gamma)+C(p,\gamma)]\frac{1-z\bar{z}}{u}+A(p,q,\gamma),\nonumber
\end{align}
 where
\begin{align}
	&A(p,q,\gamma):=-\frac{\kappa}{2}\gamma^2+\gamma+p-q,\\
	&C(p,\gamma):=-\frac{\kappa}{2}\gamma^2+\bigg(2+\frac{\kappa}{2}\bigg)\gamma- p,\\
	&\beta(p,\gamma):=\frac{\kappa}{2}\gamma^2-C(p,\gamma)=k\gamma^2-\bigg(2+\frac{\kappa}{2}\bigg)\gamma+ p.\label{eq_beta}
\end{align}
We will often use the shorthand notations $A^{\sigma}(\gamma),C(\gamma),\beta(\gamma)$ for $A(p,q,\gamma), C(p,\gamma), \beta(p,\gamma)$, where $\sigma=q/p-1$. It is obvious that on the curve in the $(p,q)-$plane defined by
\begin{equation}
\begin{split}
A^{\sigma}(\gamma)&=-\frac{\kappa}{2}\gamma^2+\gamma+p-q=0,\\
C(\gamma)&=-\frac{\kappa}{2}\gamma^2+\bigg(2+\frac{\kappa}{2}\bigg)\gamma- p=0,
\end{split}
\end{equation}
 where $\gamma$ is a real parameter, we have a solution $G$ of \eqref{eq1zz} as
\begin{equation}\label{eq G on red}
	G(z,\bar{z})=(1-z\bar{z})^{-\beta(\gamma)}u^\gamma=(1-z\bar{z})^{-\frac{k\gamma^2}{2}}(1-z)^\gamma(1-\bar{z})^\gamma.
\end{equation}
On this curve, the average generalized integral means spectrum is thus given in term of the parameter $\gamma$ by
\begin{equation}
\beta(p,q)=\left\{
\begin{array}{ll}
\beta(\gamma) & \gamma\geq-1/2 \\
\beta(\gamma)-2\gamma-1 & \gamma<-1/2.
\end{array} 
\right. 
\end{equation}
We see that the function $\beta(p,q)$ is determined by the quadratic functions $A^{\sigma}(\gamma),C(\gamma),\beta(\gamma)$. As we will see in the rest of this introduction, all analytic expressions so far obtained  of the spectrum $\beta(p,q)$ are defined by means of these quadratic functions.\\

{\it Dual property of the function $\beta(\gamma)$.}
Function $\beta$ defined by \eqref{eq_beta} has an important {\it dual property}: let $\gamma$ and $\gamma'$ such that
\begin{equation}\label{eq def dual points}
\gamma+\gamma'=\frac{2}{\kappa}+\frac{1}{2},
\end{equation}
then $\beta(\gamma)=\beta(\gamma')$. Denote $\gamma_{lin}$ the value of the parameter $\gamma$ at which $\gamma$ and $\gamma'$ coincide
\begin{equation}
	\gamma_{lin}:=\frac{1}{\kappa}+\frac{1}{4}.
\end{equation}
It is remarkable that $\beta(\gamma_{lin})= \min_\gamma \beta(\gamma)$.\\

{\it Spectrum functions.} Let us denote $\gamma_1^-, \gamma_1$ the real solutions of $A^{\sigma}(\gamma)=0$ and $\gamma_0, \gamma_0^+$ the real solutions of $C(\gamma)=0$:
\begin{align}
	\gamma_0&=\frac{2}{\kappa}+\frac{1}{2}-\frac{1}{2\kappa}\sqrt{(4+\kappa)^2-8\kappa p},\\
	\gamma_0^+&=\frac{2}{\kappa}+\frac{1}{2}+\frac{1}{2\kappa}\sqrt{(4+\kappa)^2-8\kappa p},\\
	\gamma_1^-&=\frac{1}{\kappa}-\frac{1}{\kappa}\sqrt{1-2\sigma\kappa p}=\frac{1}{\kappa}-\frac{1}{\kappa}\sqrt{1+2\kappa(p-q)},\\
	\gamma_1&=\frac{1}{\kappa}+\frac{1}{\kappa}\sqrt{1-2\sigma\kappa p}=\frac{1}{\kappa}+\frac{1}{\kappa}\sqrt{1+2\kappa(p-q)}.\label{eq def gamma_1}
\end{align}
\begin{rem}
	 The solutions $\gamma_0, \gamma_0^+$ are defined only to the left of the vertical line
	 \begin{equation}\label{eq line Delta 0}
	 	\Delta_0: p=-1-3\kappa/8.
	 \end{equation}
	While $\gamma_1^-,\gamma_1$ are defined only below the line of slope 1
	\begin{equation}\label{eq line Delta 1}
		\Delta_1: q=p+1/2\kappa.
	\end{equation}
\end{rem}

\begin{defn}
	The {\it spectrum functions} $\beta_{0}, \beta_{tip}, \beta_{1}, \beta_{lin}$ are defined as
	\begin{align}
	\beta_{0}(p)&:=\beta(\gamma_0)=-p+\frac{4+\kappa}{4\kappa}(4+\kappa-\sqrt{4+\kappa)^2-8\kappa p}),\\
	\beta_{tip}(p)&:=\beta(\gamma_0)-2\gamma_0-1=-p-1+\frac{1}{4}(4+\kappa-\sqrt{4+\kappa)^2-8\kappa p}),\label{eq def beta tip}\\
	\beta_{1}(p,q)&:=\beta(\gamma_1)=3p-2q-\frac{1}{2}-\frac{1}{2}\sqrt{1+2\kappa(p-q)},\\
	\beta_{lin}(p)&:=\beta(\gamma_{lin})=p-\frac{(4+\kappa)^2}{16\kappa}.
	\end{align}
\end{defn}

We call $\mathcal{S}$ the infinite wedge located below the line $\Delta_1$ \eqref{eq line Delta 1} and to the left of the line $\Delta_0$ \eqref{eq line Delta 0}. This is the domain where both $\gamma_0, \gamma_1$, hence $\beta_{0}$ and $\beta_{1}$, are well defined.\\

Return to values of the average generalized integral means spectrum $\beta(p,q)$, on the above curve $A^{\sigma}(\gamma)=0, C(\gamma)=0$, the value of $\beta(p,q)$ is given by
\begin{align}
\beta(p,q)&=\left\{
\begin{array}{ll}
\beta(\gamma_0)-2\gamma_0-1& \gamma=\gamma_0\leq-1/2 \\
\beta(\gamma_0) & -1/2<\gamma=\gamma_0\leq\gamma_{lin}\\
\beta(\gamma_1) & \gamma=\gamma_1>\gamma_{lin}\\
\end{array} 
\right.\\
&=\left\{
\begin{array}{ll}
\beta_{tip}(p)& p\leq p'_0(\kappa)\\
\beta_0(p) & p'_0(\kappa)<p\leq p_0(\kappa)\\
\beta_1(p,q) & p>p_0(\kappa)\\
\end{array} 
\right.
\end{align}
where $p'_0(\kappa)=-1-3\kappa/8$ and $p_0(\kappa)=3(4+\kappa)^2/32\kappa$.

In \cite{BDZ2016,BDZ2017,BS2009} the average integral means spectrum of the exterior whole--plane SLE, corresponding to the spectrum $\beta(p,q)$ on the line $q=2p$, was obtained as
\begin{equation}
	\beta(p,2p)=\left\{
	\begin{array}{ll}
	\beta_{tip}(p)& p\leq p'_0(\kappa)\\
	\beta_0(p) & p'_0(\kappa)<p\leq p_0(\kappa)\\
	\beta_{lin}(p) & p>p_0(\kappa)\\
	\end{array} 
	\right.
\end{equation}

In \cite{DNNZ2011,DNNZ2015} the average integral means spectrum of the interior whole--plane SLE, corresponding to the spectrum $\beta(p,q)$ on the line $q=0$, was obtained as
\begin{equation}
\beta(p,0)=\left\{
\begin{array}{ll}
\beta_{tip}(p)& p\leq p'_0(\kappa)\\
\beta_0(p) & p'_0(\kappa)<p\leq p^*(\kappa)\\
\beta_{1}(p,0) & p^*(\kappa)<p<\min(\hat{p}(\kappa),p(\kappa))\\
\end{array} 
\right.
\end{equation}
where
\begin{align}
	p^*(\kappa)&=(\sqrt{2(4+\kappa)^2+4}-6)(\sqrt{2(4+\kappa)^2+4}+2)/32\kappa,\label{p*}\\
	 \hat{p}(\kappa)&=1+\kappa/2,\label{p hat}\\
	 p(\kappa)&=(6+\kappa)(2+\kappa)/8\kappa.\label{p intersec red-ox}
\end{align}

{\bf Transition lines.} We recall the subsets of the parameter $(p,q)$--plane where the spectrum functions $\beta_{tip},\beta_{0},\beta_{lin},\beta_{1}$ coincide.\\

{\it The line $D'_0$.} Since the definition \eqref{eq def beta tip} of $\beta_{tip}$, the spectrum functions $\beta_{tip}$ and $\beta_{0}$ coincide on the set characterized by $\gamma_0(p)=-1/2$, that is the vertical line
\begin{equation}
	D'_0:=\bigg\{(p,q): p=-1-\frac{3\kappa}{8}\bigg\}.
\end{equation}

{\it The Red parabola and the Green parabola.} To find the set where $\beta_{0}$ and $\beta_{1}$ coincide, we look for the solution of the equation $\beta(\gamma_0(p))=\beta(\gamma_1(p,q))$. This equation is equivalent to $\gamma_0(p)=\gamma_1(p,q)$ or $\gamma_0(p)=\gamma_1'(p,q)$ where $\gamma_1'(p,q)$ is the dual value of $\gamma_1(p,q)$ by the Dual property of $\beta$. The last equations lead to introducing the two parabolas so--called (and drawn in) \textcolor{red}{red}, denoted by $\mathcal{R}$ and \textcolor{green}{green}, denoted by $\mathcal{G}$.\\
\indent
The {\it red parabola $\mathcal{R}$} is the previously mentioned curve on which we find the integrable form \eqref{eq G on red} of the generalized moment $\mathbb{E}(\vert z \vert^q \vert f'(z)) \vert^p/\vert f(z)\vert^q)$. Recall that this parabola is defined by the simultaneous conditions
\begin{equation}
	A^\sigma(p,\gamma)=A(p,q,\gamma)=0, C(p,\gamma)=0.
\end{equation}
Its parametric form is given by
\begin{equation}
\begin{split}
p&=p_\mathcal{R}(\gamma):=\bigg(2+\frac{\kappa}{2}\bigg)\gamma-\frac{\kappa}{2}\gamma^2,\\
q&=q_\mathcal{R}(\gamma):=\bigg(3+\frac{\kappa}{2}\bigg)\gamma-\kappa\gamma^2, \gamma\in\R.
\end{split} 
\end{equation}
Along the red parabola $\mathcal{R}$, we successively have
\begin{align}
	\gamma&=\gamma_1^{-}(p)=\gamma_0 (p), \gamma\in(-\infty,1/\kappa],\\
	\gamma&=\gamma_1(p)=\gamma_0 (p), \gamma\in[1/\kappa,2/\kappa+1/2],\label{eq gamma red middle}\\
	\gamma&=\gamma_1(p)=\gamma_0^+ (p), \gamma\in[2/\kappa+1/2,+\infty).
\end{align}
The above three intervals of the parameter $\gamma$ are respectively corresponding to the three parts of the red parabola $\mathcal{R}$ that we hereafter call the {\it left part}, the {\it middle part} and the {\it right part} of $\mathcal{R}$. These three parts are separated by the tangent points $T_1, T_0$ of the red parabola with the tangent lines $\Delta_1, \Delta_0$ respectively defined by \eqref{eq line Delta 1}, \eqref{eq line Delta 0} (see Figure \ref{fig_red_green}). From the definitions of $\beta_{0},\beta_{1}$ and \eqref{eq gamma red middle}, we see that $\beta_{0}$ and $\beta_{1}$ coincide on the middle part of the red parabola $\mathcal{R}$.\\
\indent
The {\it green parabola $\mathcal{G}$} is defined by the simultaneous conditions
\begin{equation}
\begin{split}
&A^{\sigma}(p,\gamma')=A(p,q,\gamma)=0,\,\,C(p,\gamma)=0,\\
&\gamma+\gamma'=2/\kappa+1/2.
\end{split}
\end{equation}
Here we have set that $\gamma$ and $\gamma'$ are {\it dual} of each other by the Dual property of the function $\beta$.
We find the parametric form of $\mathcal{G}$ as
\begin{equation}
\begin{split}
p&=p_\mathcal{G}(\gamma'):=\frac{(4+\kappa)^2}{8\kappa}-\frac{\kappa}{2}\gamma'^2,\\
q&=q_\mathcal{G}(\gamma'):=\frac{(4+\kappa)^2}{8\kappa}+\gamma'-\kappa\gamma'^2, \gamma'\in\R.
\end{split}
\end{equation}
Along the parabola $\mathcal{G}$, we have
\begin{align}
	\gamma'&=\gamma_1^{-}(p), \gamma=\gamma_0^+ (p),\gamma\in(-\infty,0],\\
	\gamma'&=\gamma_1^{-}(p),\gamma=\gamma_0 (p), \gamma\in[0,1/\kappa],\\
	\gamma'&=\gamma_1(p), \gamma=\gamma_0 (p), \gamma\in[1/\kappa,+\infty).\label{eq gamma green right}
\end{align}
The intervals $(-\infty,1/\kappa]$ and $[1/\kappa,+\infty)$ of the parameter $\gamma$ are respectively corresponding to the two parts of the green parabola $\mathcal{G}$ that we hereafter call the {\it right branch} and the {\it left branch} of $\mathcal{G}$. These two branches are separated by the tangent point $T'_1$ of the green parabola with the tangent line $\Delta_1$ (see Figure \ref{fig_red_green}). From the Dual property of the function $\beta$ and \eqref{eq gamma green right}, we see that $\beta_{0}$ and $\beta_{1}$ coincide on the left branch of the green parabola $\mathcal{G}$.\\

{\it The lines $D_0$ and $D_1$.} The intersection points of the red parabola and the green parabola are
\begin{align}
	P_0: p_0&=p_0(\kappa)=\frac{3(4+\kappa)^2}{32\kappa}, q_0=\frac{(4+\kappa)(8+\kappa)}{16\kappa},\\
	P_1: p_1&=\frac{(8+\kappa)(8+3\kappa)}{32\kappa},  q_0=\frac{(4+\kappa)(8+\kappa)}{16\kappa}.
\end{align}
Let $D_0$ and $D_1$ respectively be the vertical line and the slope 1 line passing through the point $P_0$
\begin{align}
	D_0:&=\bigg\{(p,q): p=p_0\bigg\}\\
	D_1:&=\bigg\{(p,q): q-p=q_0-p_0=\frac{16-\kappa^2}{32\kappa}\bigg\}.
\end{align}
The line $D_0$ is the set on which $\beta_{0}$ and $\beta_{lin}$ coincide while the line $D_1$ is the set on which $\beta_{1}$ and $\beta_{lin}$ coincide. Indeed, one can see that the equation $\beta(\gamma_0(p))=\beta(\gamma_{lin}(p))$ is equivalent to $\gamma_0(p)=\gamma_{lin}(p)$. The last one is then simplified to obtain $p=p_0$. Similarly, the equation $\beta(\gamma_1(p,q))=\beta(\gamma_{lin}(p))$ is equivalent to $\gamma_1(p,q)=\gamma_{lin}(p)$. The last one is then simplified to obtain $q-p=q_0-p_0$.\\

{\it The Blue Quartic.} To find the set where $\beta_{1}$ and $\beta_{tip}$ coincide, we look for the solution of the equation $\beta_1(p,q)=\beta_{tip}(p)$. It was showed in \cite{DHLZ2018} that the solution lies on a branch of a quartic $\mathcal{Q}$, so--called \textcolor{blue}{blue}, defined by
\begin{equation}
\begin{split}\label{eq blue quartic}
p&=p_\mathcal{Q}(\gamma):=\frac{\kappa}{16}\bigg(1+\frac{\kappa}{4}\bigg)\gamma-\frac{\kappa}{2}\gamma^2-\frac{1}{8}\Delta^{\frac{1}{2}}(\gamma),\\
q&=q_\mathcal{Q}(\gamma):=p_\mathcal{Q}(\gamma)+\gamma-\kappa\gamma^2, \gamma\in\R
\end{split}
\end{equation}
where
\begin{equation}
\Delta(\gamma)=4k^2\gamma^2-2\kappa(4+\kappa)\gamma+\frac{1}{4}(8+\kappa)^2+4\kappa.
\end{equation}
Along this branch of the quartic, we successively have
\begin{align}
	\gamma&=\gamma_1^{-}(p,q); \beta(\gamma_1^{-}(p,q))=\beta_{tip}(p), \gamma\in(-\infty,1/\kappa],\\
	\gamma&=\gamma_1(p,q); \beta(\gamma_1(p,q))=\beta_{tip}(p), \gamma\in[1/\kappa,+\infty).
\end{align}
Hence the spectrum functions $\beta_{1}$ and $\beta_{tip}$ coincide on the part of the quartic branch $\mathcal{Q}$ \eqref{eq blue quartic} corresponding to the interval $[1/\kappa,+\infty)$ of $\gamma$.
The intersection of the blue quartic $\mathcal{Q}$ \eqref{eq blue quartic} with the green parabola $\mathcal{G}$ is located at
\begin{equation}
Q_0: p_0'=-1-\frac{3\kappa}{8}, q_0'=-2-\frac{7\kappa}{8}; \gamma=1+\frac{2}{\kappa}.
\end{equation}
Note that the vertical line $D'_0$, corresponding to the transition from $\beta_{tip}$ to $\beta_{0}$, also passes through $Q_0$.\\
\begin{figure}[httb]
	\centering
	\begin{subfigure}{0.5\textwidth}
		\centering\captionsetup{width=.8\linewidth}
		\includegraphics[width=0.9\linewidth]{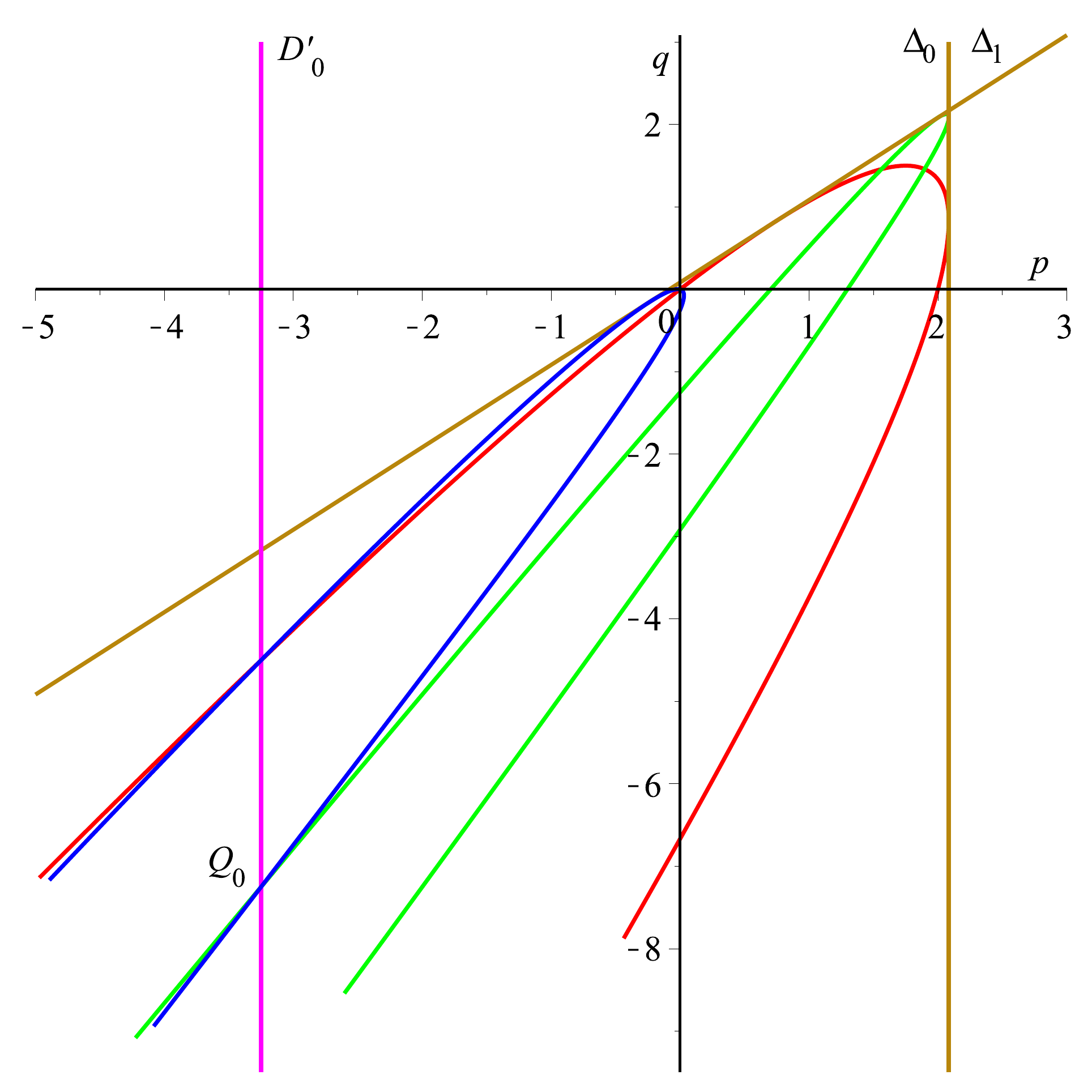}
		\caption{$D'_0$ passes through the intersection point $Q_0$ of the blue quartic $\mathcal{Q}$ and the left branch of the green parabola $\mathcal{G}$.}
		\label{fig transition blue green}
	\end{subfigure}%
	\begin{subfigure}{0.5\textwidth}
		\centering\captionsetup{width=.8\linewidth}
		\includegraphics[width=0.9\linewidth]{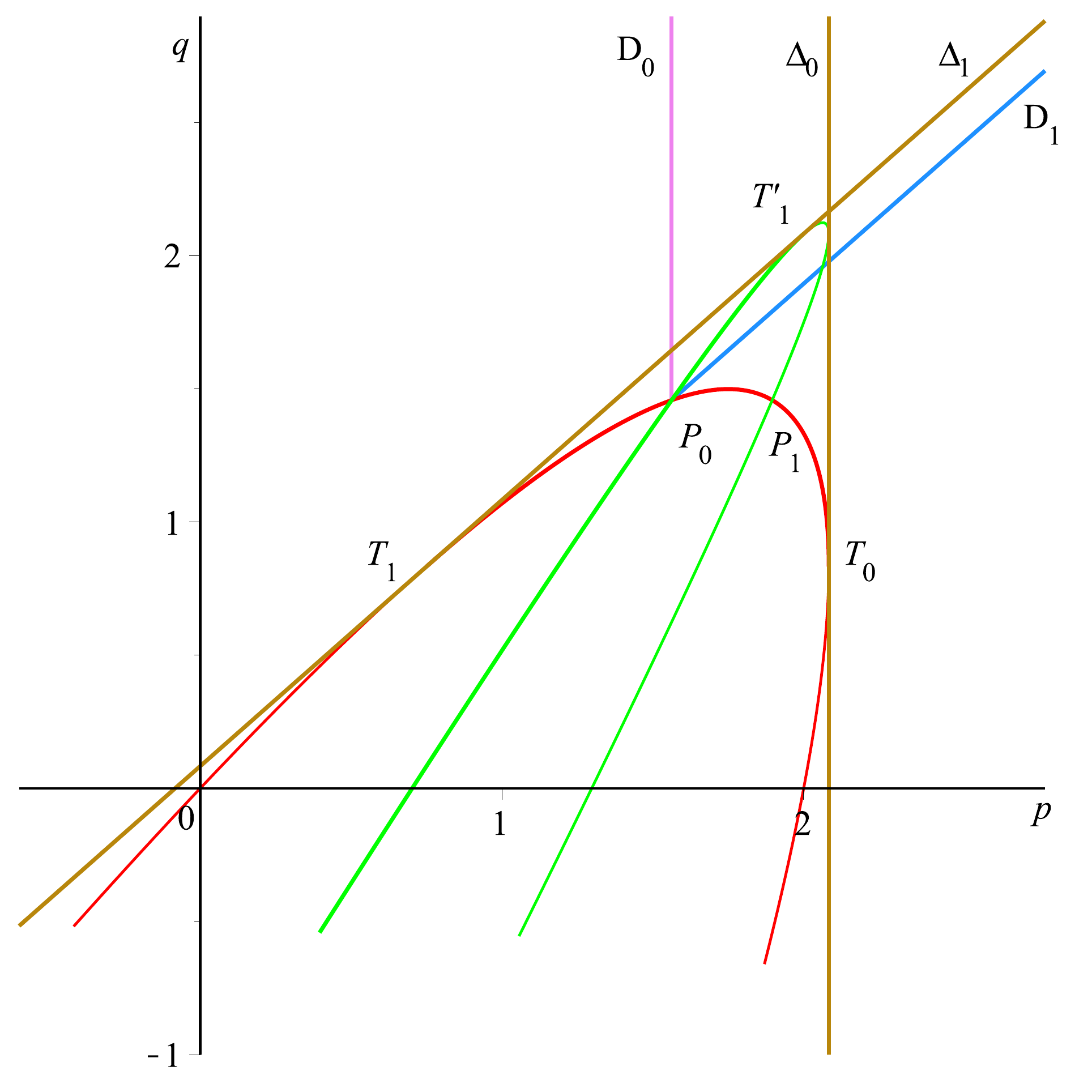}
		\caption{$P_0$ is a concurrency point of the middle part of the red parabola $\mathcal{R}$, the left branch of the green parabola $\mathcal{G}$, the lines $D_0$ and $D_1$.}
		\label{fig_red_green}
	\end{subfigure}
	\caption{{\it Various transition lines.}}
	\label{whpl2}
\end{figure}

{\bf Conjecture about the average generalized integral means spectrum of whole--plane SLE.} In \cite{DHLZ2015arXiv,DHLZ2018}, the authors claimed that the generalized integral means spectrum of whole--plane SLE has exactly four phases $\beta_{tip}, \beta_{0}, \beta_{lin}, \beta_{1}$. It comes from the belief (probably true) that the values of $\beta(p,q)$ are only given by the singularity of the moment expectation $\mathbb{E}(\vert z \vert ^q\vert f'(z) \vert^p/\vert f(z) \vert^q)$ at the generic part of the boundary of the image domain (the bulk), at the growing tip of the SLE trace, at the tip at infinity (unboundedness of $f=f_0$) or by the linear prolongation due to the convexity of the generalized spectrum. There is only one possible scenario that emerges to construct the average generalized integral means spectrum by a continuous matching of the four spectra $\beta_{tip},\beta_{0},\beta_{lin},\beta_1$ along the phase transition lines described above: $\beta(p,q)$ is respectively given by the functions $\beta_{tip}, \beta_{0}, \beta_{lin}, \beta_{1}$ in the four zones $\mathfrak{D}_{tip}, \mathfrak{D}_{0}, \mathfrak{D}_{lin}, \mathfrak{D}_{1}$ where these zones are defined as

$\mathfrak{D}_{tip}$ is the domain located above the blue quartic up to point $Q_0$ and to the left of the upper half-line $D'_0$ starting at $Q_0$.

$\mathfrak{D}_0$ is the domain located to the right of the upper half-line $D'_0$ starting from $Q_0$, above the part $Q_0P_0$ of the left branch of the green parabola $\mathcal{G}$, to the left of the upper half-line $D_0$ starting from $P_0$.

$\mathfrak{D}_{lin}$ is the infinite wedge of apex $P_0$ situated to the right of the line $D_0$ and above the line $D_1$.

$\mathfrak{D}_1$ is the domain situated below the line $D_1$, to the right of the left branch of the green parabola $\mathcal{G}$ above the point $Q_0$ and to the right of the blue quartic $\mathcal{Q}$ below the point $Q_0$  (see Figure \ref{fig_SLE_generalized_spectrum_diagram}).
\begin{figure}[h]
	\centering
	\includegraphics[width=0.5\linewidth]{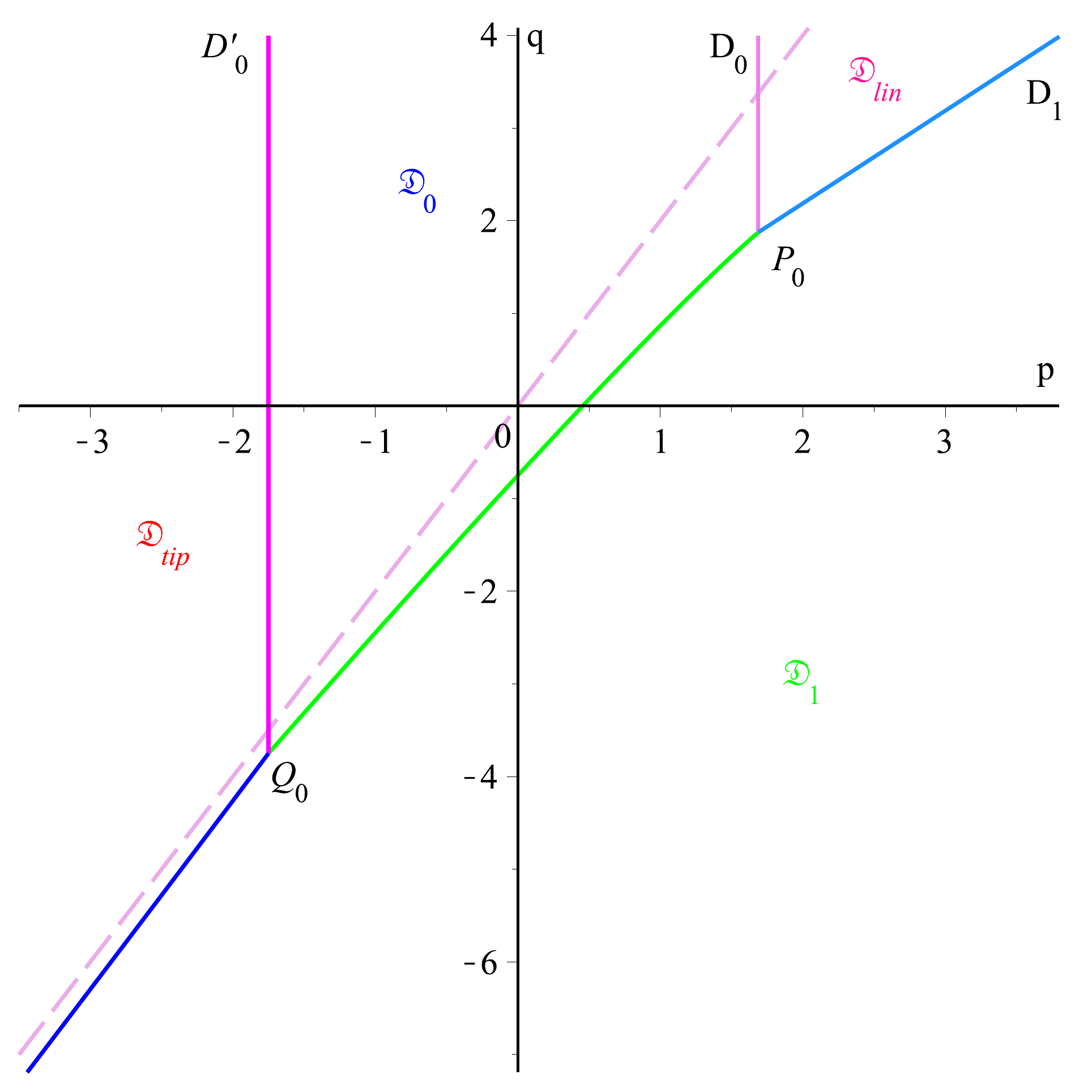}
	\caption{{\it Phase diagram of $\beta(p,q)$ in the conjecture introduced in \cite{DHLZ2015arXiv,DHLZ2018}:} The domains $\mathfrak{D}_{tip}, \mathfrak{D}_{0}, \mathfrak{D}_{lin}, \mathfrak{D}_{1}$ respectively correspond to the phases $\beta_{tip}, \beta_{0}, \beta_{lin}, \beta_{1}$. The dashed line $q=2p$, corresponding to the {\it bounded} exterior whole--plane SLE, successively passes the domains $\mathfrak{D}_{tip}, \mathfrak{D}_{0}, \mathfrak{D}_{lin}$ as was proved in \cite{BS2009}.}
	\label{fig_SLE_generalized_spectrum_diagram}
\end{figure}

This conjecture is already proved in the same works for the domain situated to the left of the right branch of the green parabola up to the intersection point with the line $D_3$: $q-p=-1-\kappa/2$, followed by the line $D_3$ up to the intersection point of this line with the right part of red parabola starting from $P_0$, followed by the red parabola up to point $P_0$, then followed by the upper half-line $D_1$: $q=p-(k^2-16)/32\kappa$ (see Figure \ref{fig_old_validity_zone}). In order to do that, the authors generalized the martingale argument in \cite{BS2009,DNNZ2015,BDZ2017} to the two parameters $p,q$ setting, then obtained the PDE \eqref{eq1zz} verified by $\mathbb{E}(\vert z \vert ^q\vert f'(z) \vert^p/\vert f(z) \vert^q)$. Since this equation is parabolic in the polar variables $(r,\theta)$, it allowed the authors to use the maximum principle to estimate the integral means of the solution on the circles $\vert z \vert =r$, for all $r$ sufficiently closed to $1$, by those of a sub--solution and a super--solution. In fact, the authors have constructed appropriate sub--solutions and super--solutions to the PDE \eqref{eq1zz} with that they proved in turn the conjecture on sub-domains of the above mentioned domain:
\begin{thm}\label{thm DHLZ}({\it Duplantier--Ho--Le--Zinsmeister} \cite{DHLZ2015arXiv,DHLZ2018})\\Let $\beta(p,q)$ the average generalized integral means spectrum of whole--plane $\SLE$. Then
	\begin{enumerate}[before=\leavevmode,label=\upshape(\Roman*),ref=\thethm (\Roman*)]
		\item\label{beta_0-tip}
		$\beta(p,q)$ is given by $\beta_0(p)$ in $\mathfrak{D}_0$ and by $\beta_{tip}(p)$ in $\mathfrak{D}_{tip}$.

		\item \label{thm_lin}
		$\beta(p,q)$ is given by $\beta_{lin}(p)$ in $\mathfrak{D}_{lin}$.

		\item\label{spec B-D-Z} Denote $\mathfrak{D}$ the interior part of the green parabola located to the left of $D_0$ and $\mathfrak{W}$, a sub-domain of $\mathfrak{D}$, the lower infinite wedge of apex $Q_0$ located between the left branch of the green parabola and the blue quartic (see Figure \ref{fig_SLE_spectrum_validity-DHLZ}). Then $\beta(p,q)$ is given by $\beta_{tip}(p,q)$ in $\mathfrak{W}$ and by $\beta_1(p,q)$ in $\mathfrak{D}\setminus \mathfrak{W}$.
		
		\item\label{spec D-N-N-Z} Denote $\hat{\mathfrak{D}}$ the racket-shaped finite domain located between the left branch of the green parabola and the right branch of the red parabola and above the line $D_3$ (see Figure \ref{fig_SLE_spectrum_validity-DHLZ}). Then $\beta(p,q)$ is given by $\beta_1(p,q)$ in $\hat{\mathfrak{D}}$.
	\end{enumerate}
\end{thm}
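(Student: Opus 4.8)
For all four phases the plan is to run the parabolic maximum‑principle scheme underlying the quoted works. One starts from the fact that $G(z,\bar z)=\mathbb E\!\big(|z|^{q}|f'(z)|^{p}/|f(z)|^{q}\big)$ is a positive solution of \eqref{eq1zz}, equivalently of \eqref{eq_main_polar}, in which the coefficient $\tfrac{r(r^{2}-1)}{r^{2}-2r\cos\theta+1}$ of $G_r$ is negative on $0<r<1$; hence \eqref{eq_main_polar} is forward‑parabolic in $r$, with the circle $r=1$ playing the role of time $=+\infty$. If $\psi>0$ is smooth on $\D\setminus\{1\}$ and $\mathcal P(D)[\psi]\le 0$, then $w=\psi/G$ is a (forward) supersolution of a parabolic equation \emph{without} zeroth‑order term; running the maximum principle on the truncated cylinders $S^{1}\times[r_0,r_1]$ (on which the coefficients are uniformly parabolic, the only singular point $(1,0)$ being excluded) and letting $r_1\uparrow 1$ gives that $\min_\theta(\psi/G)(r,\cdot)$ is non‑decreasing, so $G\le C\psi$ on $\{r_0\le|z|<1\}$. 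Thus a supersolution $\psi$ with $\int_{r\partial\D}\psi\,|dz|\asymp(1-r)^{-\beta_{*}}$ yields $\beta(p,q)\le\beta_{*}$, and symmetrically a subsolution $\phi$ (i.e. $\mathcal P(D)[\phi]\ge 0$) with the same order of integral means yields $\beta(p,q)\ge\beta_{*}$.

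The building blocks are $H_{\gamma}:=(1-z\bar z)^{-\beta(\gamma)}u^{\gamma}$ with $u=(1-z)(1-\bar z)$ and $\beta(\gamma)=\kappa\gamma^{2}-(2+\tfrac{\kappa}{2})\gamma+p$. With this choice the $z\bar z/u$‑term in the formula for $\mathcal P(D)(H_\gamma)$ from the introduction vanishes and
\[
\mathcal P(D)[H_{\gamma}]=C(\gamma)\Big(\tfrac{1-z\bar z}{u}\Big)^{2}+\big[A^{\sigma}(\gamma)+C(\gamma)\big]\tfrac{1-z\bar z}{u}+A^{\sigma}(\gamma),
\]
where $A^{\sigma}(\gamma)=-\tfrac{\kappa}{2}\gamma^{2}+\gamma+p-q$ and $C(\gamma)=p_{\mathcal R}(\gamma)-p$. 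Since $\tfrac{1-z\bar z}{u}+1=\tfrac{2-z-\bar z}{u}$ with $2-z-\bar z=2(1-\operatorname{Re}z)>0$ and $u=|1-z|^{2}>0$ on $\D\setminus\{1\}$, the sign of $\mathcal P(D)[H_\gamma]$ is controlled by those of $A^{\sigma}(\gamma)$ and $C(\gamma)$: for $\gamma$ a root of $C$ (i.e. $\gamma\in\{\gamma_0,\gamma_0^{+}\}$) one gets $\mathcal P(D)[H_\gamma]=A^{\sigma}(\gamma)\tfrac{2-z-\bar z}{u}$, and for $\gamma$ a root of $A^{\sigma}$ (i.e. $\gamma\in\{\gamma_1^{-},\gamma_1\}$) one gets $\mathcal P(D)[H_\gamma]=C(\gamma)\tfrac{(1-z\bar z)(2-z-\bar z)}{u^{2}}$; on the red parabola $\mathcal R$ both vanish and $H_\gamma$ is an \emph{exact} solution, while on the left branch of $\mathcal G$ one has $\beta(\gamma)=\beta(\gamma')=\beta_1$ for dual $\gamma,\gamma'$. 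Moreover $\int_{r\partial\D}H_\gamma\,|dz|\asymp(1-r)^{-\beta(\gamma)}$ if $\gamma>-\tfrac12$ and $\asymp(1-r)^{-(\beta(\gamma)-2\gamma-1)}$ if $\gamma<-\tfrac12$, so the $H_\gamma$'s realise exactly the four spectrum functions.

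I would then treat the four domains. In $\mathfrak D_0$, resp. $\mathfrak D_{tip}$, take $\gamma=\gamma_0$: it is $>-\tfrac12$ to the right of $D'_0$ and $<-\tfrac12$ to its left, so $\int_{r\partial\D}H_{\gamma_0}\asymp(1-r)^{-\beta_0}$, resp. $(1-r)^{-\beta_{tip}}$, and the sign of $A^{\sigma}(\gamma_0)$ — which vanishes on the bounding branch of $\mathcal G$ — is the one making $H_{\gamma_0}$ a supersolution on the domain, hence $\beta\le\beta_0$, resp. $\beta\le\beta_{tip}$; the matching lower bounds come from slightly corrected blocks. In $\mathfrak D_{lin}$ take $\gamma=\gamma_{lin}=\tfrac1\kappa+\tfrac14$: then $C(\gamma_{lin})=p_0-p<0$ to the right of $D_0$ and $A^{\sigma}(\gamma_{lin})<0$ above $D_1$ (it vanishes on $D_1$ and decreases in $q$), so all three coefficients above are $\le0$, $H_{\gamma_{lin}}$ is a supersolution and $\beta\le\beta_{lin}$; the reverse inequality follows from convexity of $(p,q)\mapsto\beta(p,q)$ together with the equalities $\beta=\beta_{lin}$ on the two edges $D_0,D_1$ of the wedge $\mathfrak D_{lin}$. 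For the $\beta_1$ regions $\mathfrak D\setminus\mathfrak W$ and $\hat{\mathfrak D}$ take $\gamma=\gamma_1$ (or its dual): it is $>-\tfrac12$ there and $\mathcal P(D)[H_{\gamma_1}]=C(\gamma_1)\tfrac{(1-z\bar z)(2-z-\bar z)}{u^{2}}$ with $C(\gamma_1)=p_{\mathcal R}(\gamma_1)-p$, so $H_{\gamma_1}$ is a supersolution where $p>p_{\mathcal R}(\gamma_1)$ and a subsolution where $p<p_{\mathcal R}(\gamma_1)$; using additionally the exact solution on $\mathcal G$ near the green parabola and on $\mathcal R$ near the red parabola, one closes both bounds and gets $\beta=\beta_1$, while in $\mathfrak W$ one matches instead to $\beta_{tip}$ with $\gamma=\gamma_1^{-}<-\tfrac12$ below the blue quartic.

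I expect the main obstacle to be, not the parabolic machinery, but (i) producing the \emph{matching} one‑sided estimate in the borderline cases: the supersolution side comes out of the sign analysis above more or less for free, whereas the subsolution side needs either the convexity argument (for $\beta_{lin}$) or a carefully tuned correction added to $H_\gamma$ that flips a vanishing sign without disturbing the order $(1-r)^{-\beta_{*}}$ — precisely the subtlety behind the gap later filled in \cite{BDZ2017}; and (ii) the corner $(r,\theta)=(1,0)$, where the coefficients of \eqref{eq_main_polar} and the blocks $H_\gamma$ are all singular, which forces the maximum principle to be applied on truncated cylinders and requires checking that the comparison constants, and the comparison on the initial circle $|z|=r_0$, remain uniform as $r_1\uparrow1$.
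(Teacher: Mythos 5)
Your overall scheme --- the maximum principle in the forward--parabolic variable $r$, comparison functions built from $(1-z\bar z)^{-\beta(\gamma)}u^{\gamma}$, the assignments $\gamma=\gamma_0,\gamma_{lin},\gamma_1,\gamma_1^-$ to the four phases, and convexity for the missing inequality in $\mathfrak{D}_{lin}$ --- is indeed the strategy of the cited works as recounted in this paper (note only that what you call a supersolution, $\mathcal{P}(D)\le 0$, the paper calls a sub--solution $\phi_-$; the direction of the resulting bound is the same). The genuine gap is in the sentence ``the matching lower bounds come from slightly corrected blocks'': with the pure powers $H_\gamma$ the correction cannot be slight, and without it parts (I), (III) and the upper bound in (IV) do not close. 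For $\gamma=\gamma_0$ your own computation gives $\mathcal{P}(D)[H_{\gamma_0}]/H_{\gamma_0}=A^{\sigma}(\gamma_0)\,\frac{2-z-\bar z}{u}$, and $\frac{2-z-\bar z}{u}\to 1$ as $r\to 1$ at fixed $\theta\neq 0$; a logarithmic factor $l_\delta$ only contributes a term of order $1/(u\,\vert\log(1-z\bar z)\vert)$, which vanishes in that regime, so no choice of $\delta$ can reverse the sign. Hence $H_{\gamma_0}l_\delta$ yields only the one inequality dictated by the sign of $A^{\sigma}(\gamma_0)$, never both. The two--sided estimate in $\mathfrak{D}_0$ and $\mathfrak{D}_{tip}$ requires replacing $u^{\gamma_0}$ by $u^{\gamma_0}g_0(u)$ with $g_0$ the hypergeometric solution of \eqref{eq hypergeom g0} regular at $u=4$: this cancels the part of $\mathcal{P}(D)[\psi]/\psi$ that survives as $r\to 1$ away from $z=1$ (compare \eqref{action_oper}), after which the logarithmic term dominates off $z=1$ and Proposition \ref{prop single testf}(a) supplies comparison functions of both signs with the same integral means exponent.

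The same dressing, taken with $\gamma$ near $\gamma_1'$ rather than $\gamma_1$, is what produces the upper bound $\beta\le\beta_1$ in $\hat{\mathfrak{D}}$ (the pure power $H_{\gamma_1}$ only reproduces the lower bound of Proposition \ref{lower bound} there, since $C(\gamma_1)>0$ in $\mathcal{I}$), and part (III) needs in addition the two--term mixed functions \eqref{mixed testf gamma1} and Proposition \ref{prop mixed testf_gamma_1}(a). Two smaller points: $A^{\sigma}(\gamma_0)$ vanishes on the red parabola $\mathcal{R}$, where $\gamma_0$ coincides with a root of $A^{\sigma}$, not on the bounding branch of $\mathcal{G}$ as you assert (on $\mathcal{G}$ the relevant coincidence is $\gamma_0=\gamma_1'$); and once the hypergeometric factor is introduced you must also establish positivity of the dressed block on all of $\D$, which is automatic for pure powers but for $u^{\gamma}g_0(u)$ requires the sign analysis of Lemma \ref{lem_g_positive}.
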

In the rest of the $(p,q)-$plane, they showed that $\beta_1$ is an upper bound of $\beta(p,q)$. Namely, they proved the following
\begin{prop}\label{lower bound}({\it Duplantier--Ho--Le--Zinsmeister \cite{DHLZ2015arXiv,DHLZ2018}})\\
	Let $\{\mathcal{E}_-,\mathcal{I},\mathcal{E}_+\}$ the partition of the half-plane below $\Delta_1$ by the red parabola: An interior domain $\mathcal{I}$, a left exterior wedge $\mathcal{E}_-$ of apex $T_0$ and a right exterior wedge $\mathcal{E}_+$ of the same apex (see Figure \ref{fig_E_I_E}). Then $\beta(p,q)$ is bounded below as $\beta_1(p,q)\leq \beta(p,q)$ in $\mathcal{E}_-\cup \mathcal{I}$, whereas  $\beta(p,q)\leq \beta_1(p,q)$ in $\mathcal{E}_+$.
\end{prop}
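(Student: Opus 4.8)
The plan is to bracket the true moment $G(z,\bar z)=\mathbb{E}\big(|z|^q|f'(z)|^p/|f(z)|^q\big)$, which solves \eqref{eq1zz} with $G(0,0)=1$, by the single explicit comparison function attached to the larger root $\gamma_1=\gamma_1(p,q)$ of $A^\sigma(\gamma)=0$. Below $\Delta_1$ this root is real, and $\beta_1(p,q)=\beta(\gamma_1)$, so I set
\[
G_1(z,\bar z):=(1-z\bar z)^{-\beta(\gamma_1)}u^{\gamma_1},\qquad u=(1-z)(1-\bar z).
\]
First I would feed $G_1$ into the operator-action formula: the choice $\beta=\beta(\gamma_1)$ annihilates the $z\bar z/u$ coefficient and $A^\sigma(\gamma_1)=0$ annihilates the constant term, leaving
\[
\mathcal{P}(D)(G_1)=C(\gamma_1)\,\frac{1-z\bar z}{u}\left(\frac{1-z\bar z}{u}+1\right).
\]
Since $u=|1-z|^2>0$ and $(1-z\bar z)/u=(1-r^2)/|1-z|^2>0$ throughout $\D$, the sign of $\mathcal{P}(D)(G_1)$ is everywhere equal to the sign of the scalar $C(\gamma_1)$.

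The crux of the bookkeeping is therefore to pin that sign on each of the three regions. Using $A^\sigma(\gamma_1)=0$ and the identity $C(\gamma)-A^\sigma(\gamma)=(1+\tfrac{\kappa}{2})\gamma+q-2p$, I get the continuous expression $C(\gamma_1)=(1+\tfrac{\kappa}{2})\gamma_1+q-2p$ on the half-plane below $\Delta_1$. Because $C$ is a downward parabola with roots $\gamma_0<\gamma_0^+$, one has $C(\gamma_1)=0$ exactly when $\gamma_1\in\{\gamma_0,\gamma_0^+\}$; but $\gamma_1=\gamma_0$ (resp. $\gamma_1=\gamma_0^+$) forces $A^\sigma$ and $C$ to share a root and hence places $(p,q)$ on the middle (resp. right) part of the red parabola $\mathcal{R}$. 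Thus, inside the half-plane below $\Delta_1$, the zero locus $\{C(\gamma_1)=0\}$ is precisely the part of $\mathcal{R}$ carrying parameter $\gamma\ge1/\kappa$, which is the common boundary separating $\mathcal{E}_+$ from $\mathcal{I}\cup\mathcal{E}_-$ (the left part of $\mathcal{R}$, where $\gamma=\gamma_1^-=\gamma_0\ne\gamma_1$, is \emph{not} a zero of $C(\gamma_1)$, which is why $C(\gamma_1)$ does not change sign across it and $\mathcal{I}\cup\mathcal{E}_-$ is a single sign region). The sign is then fixed once and for all by looking far to the right: for $p>\frac{(4+\kappa)^2}{8\kappa}$ the discriminant $(4+\kappa)^2-8\kappa p$ is negative, $C$ has no real root and is everywhere negative, so $C(\gamma_1)<0$ there; as this half-plane lies in $\mathcal{E}_+$, I conclude $C(\gamma_1)<0$ on $\mathcal{E}_+$ and $C(\gamma_1)>0$ on $\mathcal{I}\cup\mathcal{E}_-$.

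Next I would convert the sign of $\mathcal{P}(D)(G_1)$ into a comparison. Writing \eqref{eq1zz} in the polar form \eqref{eq_main_polar} and dividing by the coefficient of $G_r$, namely $r(r^2-1)/(r^2-2r\cos\theta+1)<0$ on $\D$, turns the equation into a forward heat operator $\mathcal{L}$ in the time variable $r$ and reverses the inequality, so $C(\gamma_1)>0$ makes $G_1$ a sub-solution of $\mathcal{L}$ and $C(\gamma_1)<0$ a super-solution. Since $G_1(0,0)=1=G(0,0)$, the parabolic comparison principle applied to $w=G-G_1$ on each sub-cylinder $[0,r_0]\times(\R/2\pi\Z)$ (where the coefficients are regular and $\theta$ is periodic, so there is no lateral boundary) gives $w\ge0$ when $C(\gamma_1)>0$ and $w\le0$ when $C(\gamma_1)<0$; letting $r_0\to1$ yields $G_1\le G$ on $\mathcal{I}\cup\mathcal{E}_-$ and $G_1\ge G$ on $\mathcal{E}_+$. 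Integrating over $|z|=r$ and using that below $\Delta_1$ one has $\gamma_1>1/\kappa$, hence $2\gamma_1>-1$, so that $\int_0^{2\pi}|1-re^{i\theta}|^{2\gamma_1}\,d\theta$ stays bounded and the integral means of $G_1$ have growth exponent exactly $\beta(\gamma_1)=\beta_1(p,q)$, I read off $\beta_1(p,q)\le\beta(p,q)$ on $\mathcal{I}\cup\mathcal{E}_-$ and $\beta(p,q)\le\beta_1(p,q)$ on $\mathcal{E}_+$.

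The hard part is the legitimacy of the comparison principle at the corner $(r,\theta)=(1,0)$, where the coefficients of \eqref{eq_main_polar} blow up and, after normalization, the diffusion degenerates. I expect this to be the main obstacle: one cannot apply the maximum principle directly on the full disc but must run it on the truncated cylinders $r\le r_0<1$, control the zeroth-order term there by an $e^{\lambda r}$ substitution, and only afterwards pass to the limit $r_0\to1$ --- exactly the device already used for the one-parameter spectra in \cite{BS2009,DNNZ2015} and for the present two-parameter PDE in \cite{DHLZ2018}. Once that exhaustion is in place, the whole proposition reduces to the elementary sign determination of $C(\gamma_1)$ carried out above.
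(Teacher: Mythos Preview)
Your approach is essentially the one the paper attributes to \cite{DHLZ2015arXiv,DHLZ2018}: take the single test function $\psi=(1-z\bar z)^{-\beta(\gamma_1)}u^{\gamma_1}$, observe that the sign of $\mathcal P(D)(\psi)$ is governed by $C(\gamma_1)$, determine that sign region by region, and conclude via the parabolic maximum principle. The paper phrases this as an application of Proposition~\ref{prop single testf}(b)(c) with $\gamma=\gamma_1$, hence in principle with the logarithmic corrector $l_\delta$; you correctly notice that for this particular choice $g_0\equiv1$, the action formula collapses to $C(\gamma_1)\frac{1-z\bar z}{u}\big(\frac{1-z\bar z}{u}+1\big)$ with a \emph{globally} definite sign, so the log factor is redundant here. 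That is a legitimate simplification, not a different idea.

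One small point: matching $G$ and $G_1$ at the single point $z=0$ is not quite how the comparison is run. At $r=0$ the $G_r$-coefficient vanishes and the parabolic equation degenerates, so one does not launch the comparison from the origin. The paper's formulation fixes any $r_0\in(0,1)$, uses continuity and positivity to find constants $m,M$ with $mG_1\le G\le MG_1$ on $|z|=r_0$, and propagates the inequality outward to the annulus $r_0<r<1$; the constants are harmless for the integral means exponent. Replacing your ``$G_1(0,0)=1=G(0,0)$'' step by this standard annulus argument (which you already invoke later when you write ``run it on the truncated cylinders $r\le r_0<1$'') makes the proof match the paper's exactly.
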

The conjecture has not been proved for the whole $(p,q)-$ plane since we lack a proof for the sharpness of the above upper bound $\beta_1$. The total domain in Theorem \ref{thm DHLZ} is so far the largest one where the values of the average generalized integral means spectrum $\beta(p,q)$ of whole--plane $\SLE$ are given and the question of determining values of $\beta(p,q)$ in the rest of the $(p,q)-$plane is still open.

\begin{figure}[httb]
	\centering
	\begin{subfigure}{0.5\textwidth}
		\centering
		\includegraphics[width=0.9\linewidth]{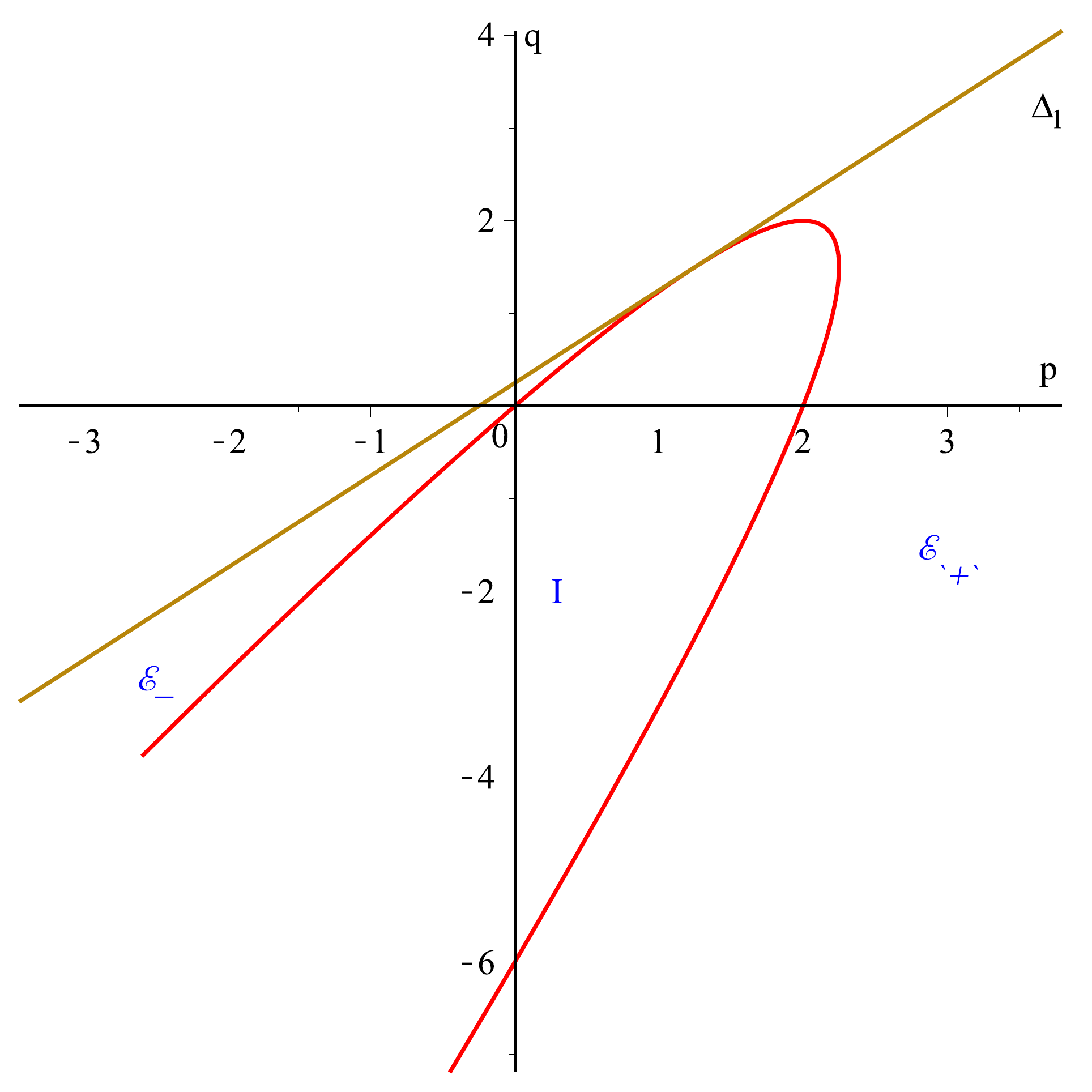}
		\caption{Domains $\mathcal{E}_{-},\mathcal{I},\mathcal{E}_{+}$}
		\label{fig_E_I_E}
	\end{subfigure}%
	\begin{subfigure}{0.5\textwidth}
		\centering
		\includegraphics[width=0.9\linewidth]{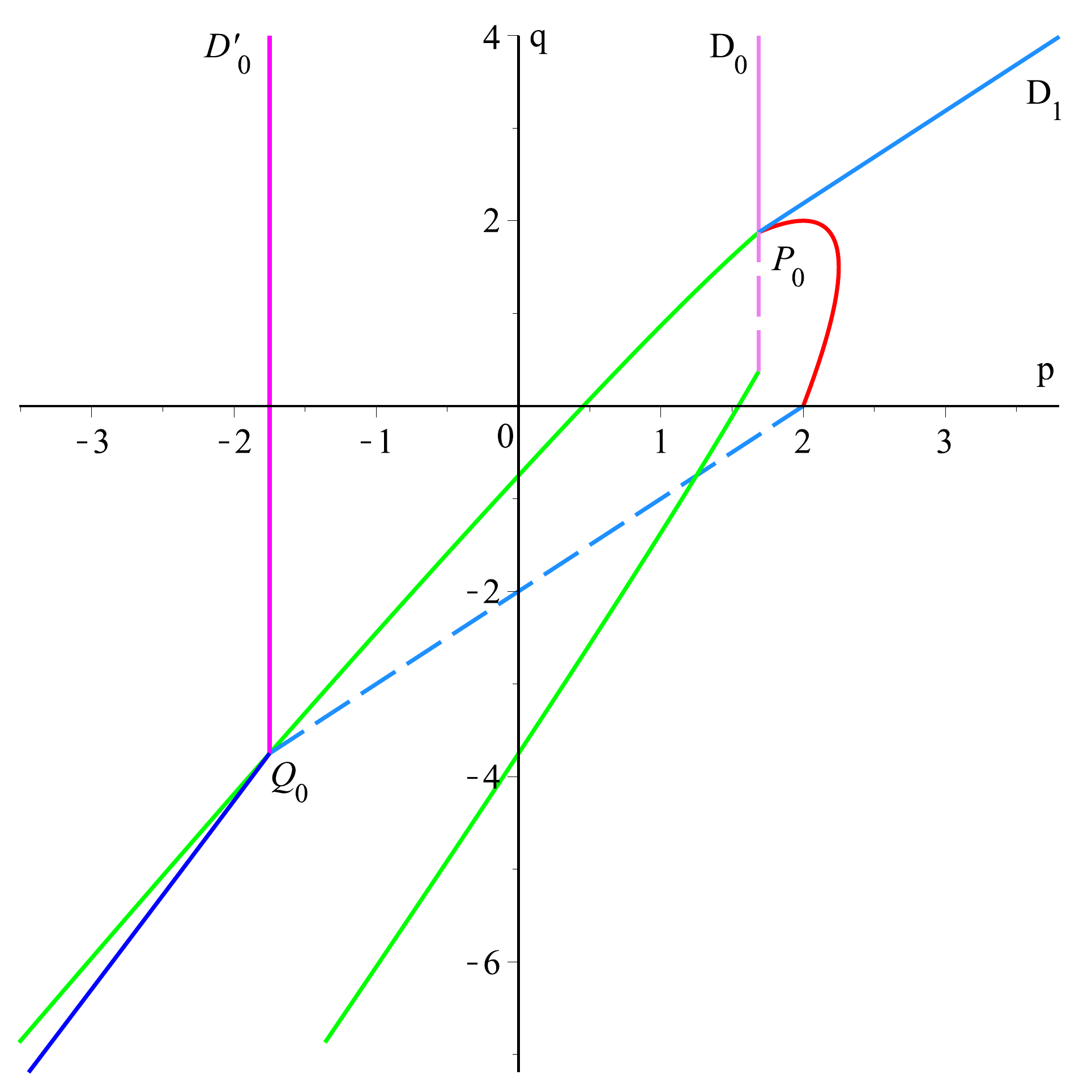}
		\caption{Domains of validity in \cite{DHLZ2015arXiv},\cite{DHLZ2018}}
		\label{fig_SLE_spectrum_validity-DHLZ}
	\end{subfigure}
	\caption{{\it Domains where the results on $\beta(p,q)$ were obtained in \cite{DHLZ2015arXiv,DHLZ2018}}}
	\label{whpl2}
\end{figure}

Finally, let us notice that, by the relation \eqref{eq_relation_gener_spec_m fold and standard_intro}, the conjecture about $\beta(p,q)$ implies another one about $\beta^{[m]}$, the average generalized integral means spectrum associated to the m--fold transform of the whole--plane SLE map $f=f_0$: Let $\mathfrak{D}_{tip}^{[m]}, \mathfrak{D}_{0}^{[m]}, \mathfrak{D}_{lin}^{[m]}, \mathfrak{D}_{1}^{[m]}$ respectively be the image of  $\mathfrak{D}_{tip}, \mathfrak{D}_{0}, \mathfrak{D}_{lin}, \mathfrak{D}_{1}$ under the inverse transformation $T_m^{-1}(p,q)=(p,(1-m)p+mq)$ of the endomorphism of $\R^2$ given by $T_m(p,q)=(p,q_m)$, where $q_m$ is defined in \eqref{eq_relation_gener_spec_m fold and standard_intro}. The value of $\beta^{[m]}$ is respectively given on $\mathfrak{D}_{tip}^{[m]}, \mathfrak{D}_{0}^{[m]}, \mathfrak{D}_{lin}^{[m]},\mathfrak{D}_{1}^{[m]}$ by $\beta_{tip},\beta_{0},\beta_{lin}$ and
\begin{equation}
\beta_1^{[m]}(p,q)=\beta_1(p,q_m)=\bigg(1+\frac{2}{m}\bigg)p-\frac{2}{m}q-\frac{1}{2}-\frac{1}{2}\sqrt{1+\frac{2\kappa}{m}(p-q)}.
\end{equation}

\begin{figure}[httb]
	\centering
	\begin{subfigure}{0.5\textwidth}
		\centering\captionsetup{width=.8\linewidth}
		\includegraphics[width=0.9\linewidth]{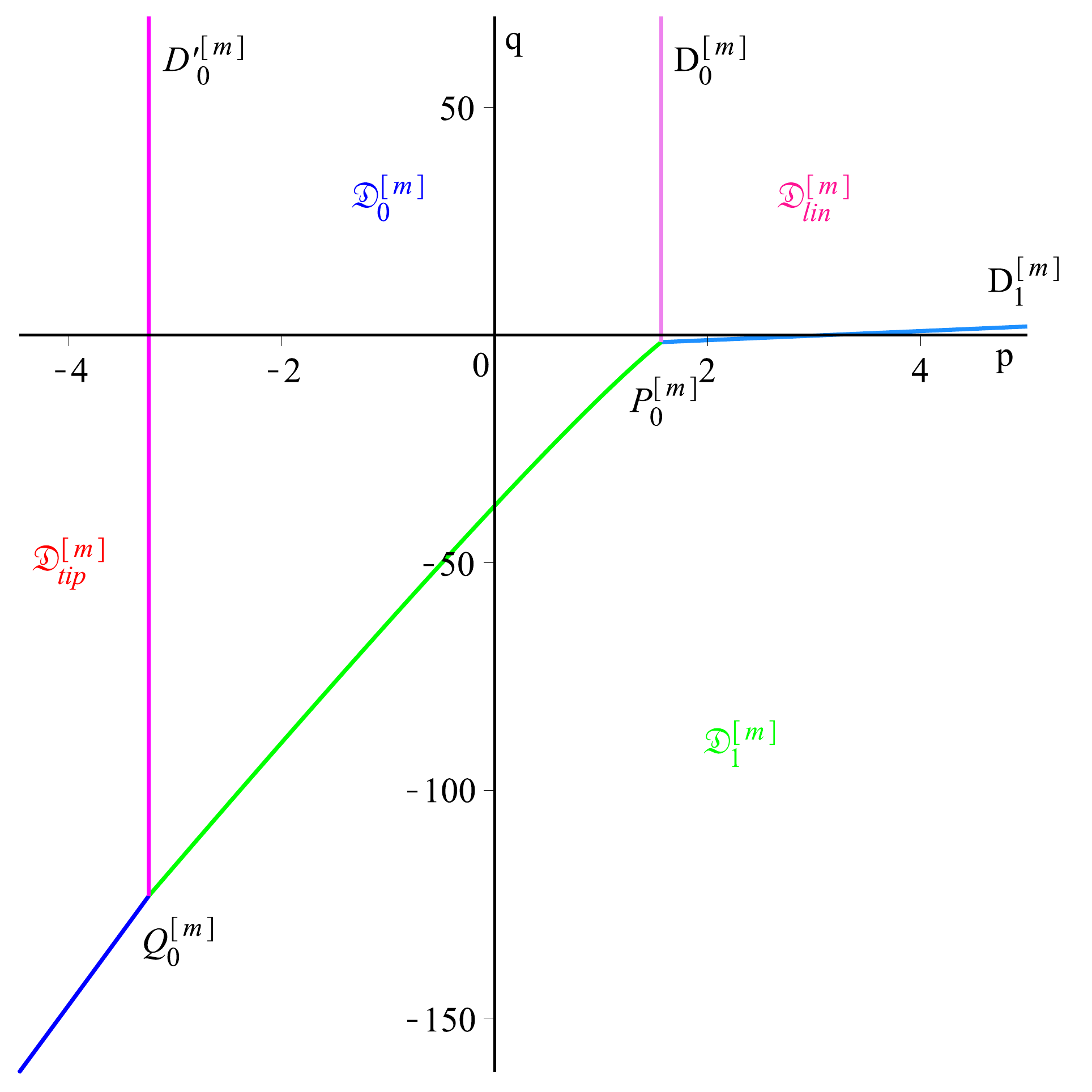}
		\caption{For $\kappa=6, m=30$, the diagram of $\beta^{[m]}(p,q)$ looks similar to that of $\beta(p,q)$. The usual integral means spectrum $\beta^{[m]}(p,0)$ of $f^{[m]}$ successively takes the phases $\beta_{tip},\beta_{0},\beta_{lin},\beta_1^{[m]}$.}
		\label{fig_m-fold_diagram_k=6}
	\end{subfigure}%
	\begin{subfigure}{0.5\textwidth}
		\centering\captionsetup{width=.8\linewidth}
		\includegraphics[width=0.9\linewidth]{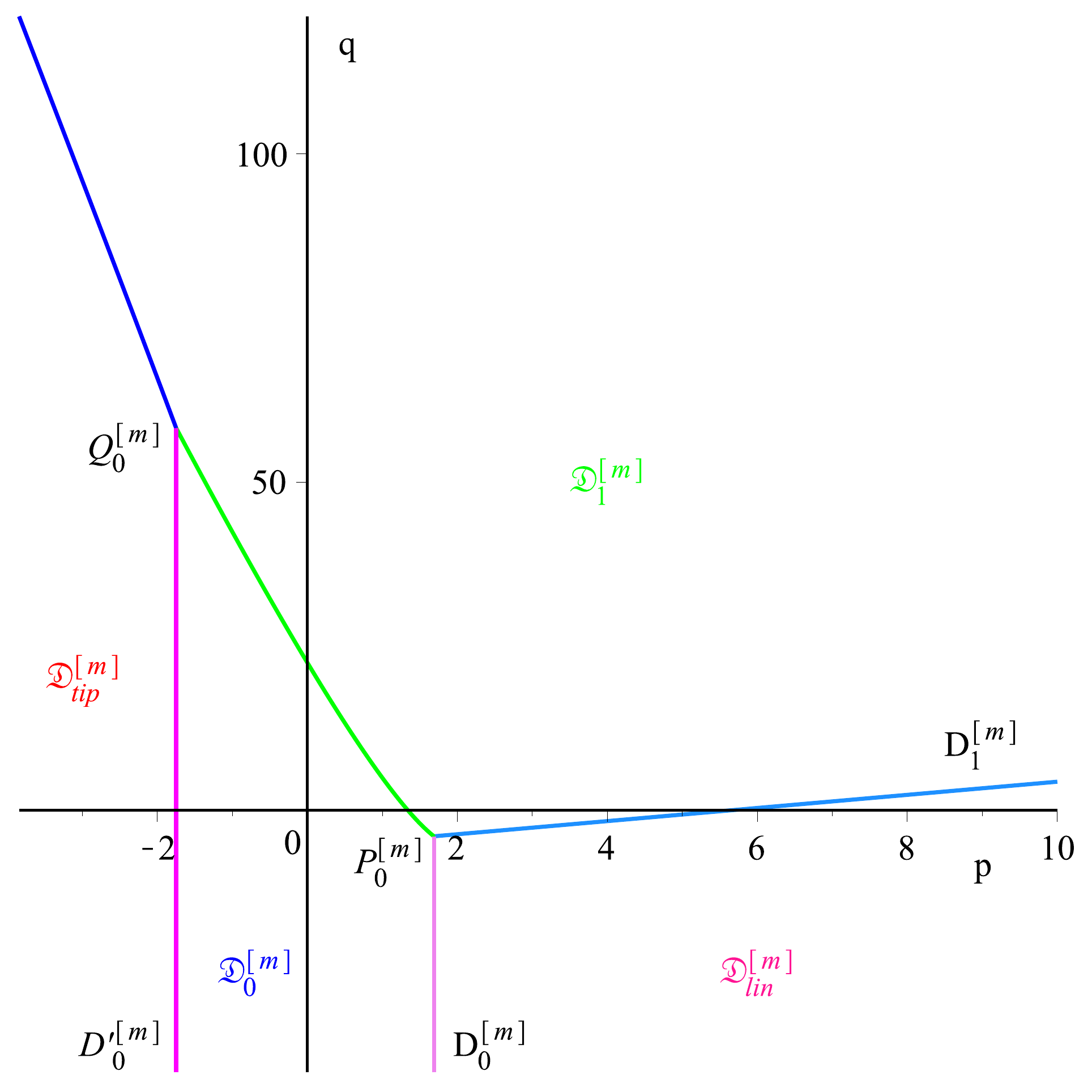}
		\caption{For $\kappa=2, m=-30$, $\det T_m^{-1}=m<0$, the vertical positions of  $\mathfrak{D}_{0}^{[m]}, \mathfrak{D}_{tip}^{[m]}, \mathfrak{D}_{lin}^{[m]},\mathfrak{D}_{1}^{[m]}$ are reversed.  The usual integral means spectrum $\beta^{[m]}(p,0)$ of $f^{[m]}$ successively takes the phases $\beta_{tip},\beta_{0},\beta_1^{[m]},\beta_{lin}$.}
		\label{fig_m-fold_diagram_k=2}
	\end{subfigure}
	\caption{{\it Conjecture about $\beta^{[m]}(p,q)$}}
	\label{whpl2}
\end{figure}

Theorem \ref{thm DHLZ} shows that the conjecture about $\beta^{[m]}$ is confirmed on the image domain under the linear transformation $T_m^{-1}(p,q)=(p,(1-m)p+mq)$ of the validity domain of the conjecture about $\beta(p,q)$ (see Figure \ref{fig_m-fold_old_validity_zone}).

\section{Proof of the main theorem}
\subsection{Maximum principle method}
Let us first introduce the following notion of {\it integral means exponent} that will be often mentioned in this work: For $\phi(z,\bar{z})$ a function defined in $\D$ and integrable on any circle $r\partial \D$, where $0<r<1$, the integral means exponent of $\phi$ is the number $\epsilon_\phi$ defined as
\begin{equation}
\epsilon_\phi:=\limsup_{r\to 1^-}\frac{\log\int_{r\partial\D}\vert \phi(z,\bar{z}) \vert \vert dz \vert}{-\log(1-r)}.
\end{equation}
The generalized spectrum $\beta(p,q)$ is then the integral means exponent of the moment expectation $\mathbb{E}(\vert z \vert ^q\vert f'(z) \vert^p/\vert f(z) \vert^q)$.
The problem of determining the values of $\beta(p,q)$ takes us to the question of estimating the integral means exponent of the solution of Eq. \eqref{eq1zz}. As mentioned above, the operator $\mathcal{P}(D)$ when written in polar coordinates as in \eqref{eq_main_polar}, is {\it parabolic}. Suppose that $\phi_{-},\phi_{+}$ are positive, bounded functions on the circle of radius $r_0<1$ such that $\mathcal{P}(D)(\phi_{-})<0$ and $\mathcal{P}(D)(\phi_{+})>0$. One can find positive constants $m,M$ such that $m\phi_{+}<G<M\phi_{-}$ on the circle $r=r_0$. The {\it maximum principle} and the {\it minimum principle} (\cite{eva1998}, Theorem 7.1.9) yield that $m\phi_{+}<G<M\phi_{-}$ in the annulus $r_0<r<1$ (we say that $\phi_{-},\phi_{+}$ are respectively a {\it sub-solution} and a {\it super-solution} of \eqref{eq_main_polar}, or of the differential operator $\mathcal{P}(D)$). It implies that $\epsilon_{\phi_{+}}\leq\epsilon_G\leq\epsilon_{\phi_{-}}$ where $\epsilon_{\phi_{+}},\epsilon_G,\epsilon_{\phi_{-}}$ are respectively the integral means exponents of $\phi_{+},G,\phi_{-}$. Thus we have the following estimations on $\beta(p,q)$: $\epsilon_{\phi_{+}}\leq\beta(p,q)\leq\epsilon_{\phi_{-}}$. Indeed, we will construct appropriate sub-solutions and super-solutions to estimate the generalized spectrum $\beta(p,q)$ and prove Theorem \ref{thm_main_theorem}.
\subsection{Sub--solutions and super--solutions}\label{sec_normalized-testf}
Inspired by the previous works \cite{DNNZ2015,DHLZ2018}, let us first introduce the following test functions
\begin{equation}\label{testf_in_g}
\psi(z,\bar{z})=(1-z\bar{z})^{-\beta(\gamma)}g(u),
\end{equation}
where $\gamma$ is a real parameter, $\beta(\gamma)$ is defined by \eqref{eq_beta}, $u=(1-z)(1-\bar{z})$ and $g$ is a general solution of the equation
\begin{equation}\label{eq_g}
[p(2-\sigma u)-2\beta(\gamma)]g(u)+\bigg[ \frac{\kappa}{2}(2-\kappa)-(4-u) \bigg]ug'(u)+\frac{\kappa}{2}(4-u)u^2g''(u)=0,
\end{equation}
with $\sigma:=q/p-1$. The generalized spectrum $\beta(p,q)$ has been studied in \cite{DHLZ2018} by using Maximum principle method with sub--solutions and super--solutions to \eqref{eq1zz} of the following form
\begin{equation}\label{eq log modification}
	\phi_{\pm}(z,\bar{z})=\psi(z,\bar{z})(-\log(1-z\bar{z}))^{\delta_{\pm}}.
\end{equation}
Here we will construct {\it mixed sub--solutions} and {\it mixed super--solutions} as
\begin{equation}\label{eq def mixedf}
\phi_{\pm}(z,\bar{z})=[\psi_0(z,\bar{z})+\psi_1(z,\bar{z})](-\log(1-z\bar{z}))^{\delta_{\pm}}
\end{equation}
where $\psi_0,\psi_1$ are tests functions of the form \eqref{testf_in_g} corresponding to two different values of parameter $\gamma$. We notice that the functions $\phi_{\pm}$ \eqref{eq log modification}, \eqref{eq def mixedf} are continuous on any circle of radius $r_0<1$, so they are bounded on this circle. One then needs to consider the positivity of $\phi_{\pm}$ and the sign of $\mathcal{P}(D)(\phi_{\pm})$ to construct sub--solutions and super--solutions to \eqref{eq_main_polar}.\\

The above Eq. \eqref{eq_g} is called {\it boundary equation} since it comes from the restriction of Eq. \eqref{eq_main_polar} to the unit circle $r=1$. Let us give a brief explain for the appearance of Eq. \eqref{eq_g} and the exponent $\beta(\gamma)$ in \eqref{testf_in_g}. This equation appears when we look for necessary conditions for that \eqref{eq1zz} has a solution of the form $G(z,\bar{z})=(1-z\bar{z})^{-\beta}g(\vert 1-z \vert ^2)$. Namely, after substituting this form of $G$ into Eq. \eqref{eq_main_polar} and then setting $r=1$, we arrive at
\begin{align}
	[2p-p\sigma (2-2\cos\theta)-2\beta]g&-[4\sin^2\theta- \kappa(2-2\cos\theta)\cos\theta]g'\label{eq to unit circle g}\\
	&+2\kappa(2-2\cos\theta)\sin^2\theta g''=0.\nonumber
\end{align}
Denote $x:=2-2\cos\theta$, Eq. \eqref{eq to unit circle g} becomes
\begin{equation}\label{eq to unit circle g-x defined}
[p(2-\sigma x)-2\beta]g(x)+\bigg[\frac{\kappa}{2}(2-x)-(4-x)\bigg]xg'(x)+\frac{\kappa}{2}(4-x)x^2g''(x)=0.
\end{equation}
If we set
\begin{equation}
	g(u)=u^\gamma g_0(u),
\end{equation}
then $g_0(u)$ verifies the following equation
\begin{align}
	[A^{\sigma}(\gamma)-2\beta+2\beta(\gamma)]g_0(u)&+\left[  \frac{\kappa}{2}(2-u)+(\kappa\gamma-1)(4-u)\right]g'_0(u)\label{eq gamma0 before set beta}\\
	&+\frac{\kappa}{2}(4-u)ug''_0(u)=0.\nonumber
\end{align}
For $\beta=\beta(\gamma)$, Eq. \eqref{eq gamma0 before set beta} becomes a {\it hypergeometric equation}
\begin{equation}\label{eq hypergeom g0}
	A^{\sigma}(\gamma)g_0(u)+\left[  \frac{\kappa}{2}(2-u)+(\kappa\gamma-1)(4-u)\right]g'_0(u)+\frac{\kappa}{2}(4-u)ug''_0(u)=0,
\end{equation}
and Eq. \eqref{eq to unit circle g-x defined} becomes \eqref{eq_g}. Indeed, the change of variable $u:=x/4$ implies that $g_0$ is a solution of the standard hypergeometric equation
\begin{equation}
	-ab g_0(x)+\left[ c-(a+b+1)x\right]g'_0(x)+x(1-x)g''_0(x)=0,
\end{equation}
where
\begin{align}
	&a=\gamma-\gamma_1,\,\, b=\gamma-\gamma_1^-,\,\,c=1+\gamma-\gamma',\label{eq a,b,c}\\
	&a'=\gamma'-\gamma_1,\,\, b'=\gamma'-\gamma_1^-,\,\,c'=1+\gamma'-\gamma.\label{eq a',b',c'}
\end{align}
Here $\gamma$ and $\gamma'$ are dual values of the parameter defined by \eqref{eq def dual points}. 
The general solution of Eq. \eqref{eq hypergeom g0} is
\begin{equation}
	g_0(u)=C_1{}_2F_1\bigg(a,b,c,\frac{u}{4}\bigg)+C_2\bigg(\frac{u}{4}\bigg)^{\gamma'-\gamma}{}_2F_1\bigg(a',b',c',\frac{u}{4}\bigg),
\end{equation}
Hence we have the expression of $g$ as
\begin{equation}\label{g}
g(u)=C_1\bigg(\frac{u}{4}\bigg)^{\gamma}{}_2F_1\bigg(a,b,c,\frac{u}{4}\bigg)+C_2\bigg(\frac{u}{4}\bigg)^{\gamma'}{}_2F_1\bigg(a',b',c',\frac{u}{4}\bigg).
\end{equation}
\begin{rem}
	Eqs. \eqref{g},\eqref{eq a,b,c},\eqref{eq a',b',c'} show that $\gamma$ and $\gamma'$ are symmetric in the expression of $g$. If we set $\beta=\beta(\gamma')$ and consider
	\begin{equation}
		g(u)=u^{\gamma'}g_1(u),
	\end{equation}
	then $g_1$ is a solution of the hypergeometric equation
	\begin{equation}
		A^{\sigma}(\gamma')g_1(u)+\left[  \frac{\kappa}{2}(2-u)+(\kappa\gamma'-1)(4-u)\right]g'_1(u)+\frac{\kappa}{2}(4-u)ug''_1(u)=0\label{eq hypergeom g0_prime}.
	\end{equation}
	Thus we have the expression of $g_1$ as
	\begin{equation}
		g_1(u)=C_1\bigg(\frac{u}{4}\bigg)^{\gamma-\gamma'}{}_2F_1\bigg(a,b,c,\frac{u}{4}\bigg)+C_2 {}_2F_1\bigg(a',b',c',\frac{u}{4}\bigg).
	\end{equation}
\end{rem}

\subsection{Singularity and sign of test functions}
In our constructions of sub--solutions and super--solutions by means of \eqref{testf_in_g},\eqref{eq log modification},\eqref{eq def mixedf}, we want that $g(u)$ is singularity free and should have a second derivative everywhere on the unit circle except at the point $z=1$ (the equation \eqref{eq1zz} on $G$ has singularity at that point). Note that $z=-1$ corresponds to $u=4$, hence $g(u)$ should have expansion $g(u)=c+O(4-u)$ at the endpoint 4. We also want the functions $\psi$ \eqref{testf_in_g}  to be positive on $\D$ (or at least $\psi>0$ in a neighborhood of $z=1$), i.e $g\geq0$ on $(0,4)$ (or at least $g>0$ in a neighborhood of $u=0$).\\

{\it Smoothness of $g$ at $u=4$.} One has the following developments at $u=4$ of the hypergeometric functions in \eqref{g}
\begin{align}
	&{}_2F_1\bigg(a,b,c,\frac{u}{4}\bigg)=\frac{\sqrt{\pi}\Gamma(c)}{\Gamma(\frac{1}{2}+a)\Gamma(\frac{1}{2}+b)}-\frac{\sqrt{\pi}\Gamma(c)}{\Gamma(a)\Gamma(b)}\sqrt{4-u}+O(4-u),\\
	&{}_2F_1\bigg(a',b',c',\frac{u}{4}\bigg)=\frac{\sqrt{\pi}\Gamma(c')}{\Gamma(\frac{1}{2}+a')\Gamma(\frac{1}{2}+b')}-\frac{\sqrt{\pi}\Gamma(c')}{\Gamma(a')\Gamma(b')}\sqrt{4-u}+O(4-u).
\end{align}
We then conclude that the function $g$ \eqref{g} does not have singularities at $u=4$ if and only if
\begin{equation}\label{cond_regular_at_4}
	C_1\frac{\sqrt{\pi}\Gamma(c)}{\Gamma(a)\Gamma(b)}+C_2 \frac{\sqrt{\pi}\Gamma(c')}{\Gamma(a')\Gamma(b')}=0.
\end{equation}
By excluding the trivial case $C_1=C_2=0$, Eq. \eqref{cond_regular_at_4} turns out to be one of the followings:

{\it Case I: $C_1=0$, $a'=-n$ or $b'=-n$ where $n\in \N$.} Function $g(u)$ is given by
\begin{align}
	&g(u)=C_2\bigg(\frac{u}{4}\bigg)^{\gamma_1-n}{}_2F_1\bigg(-n,\gamma_1-\gamma_1^{-} -n,\frac{1}{2}-\frac{2}{\kappa}-2n+2\gamma_1,\frac{u}{4}\bigg)\\
	\text{or } &g(u)=C_2\bigg(\frac{u}{4}\bigg)^{\gamma_1^{-}-n}{}_2F_1\bigg(\gamma_1^{-}-\gamma_1 -n,-n,\frac{1}{2}-\frac{2}{\kappa}-2n+2\gamma_1^{-},\frac{u}{4}\bigg).
\end{align}
Note that the above hypergeometric functions are polynomials of order $n$.

{\it Case II: $C_2=0$, $a=-n$ or $b=-n$ where $n\in \N$.} Function $g(u)$ is given by
\begin{align}
	&g(u)=C_1\bigg(\frac{u}{4}\bigg)^{\gamma_1-n}{}_2F_1\bigg(-n,\gamma_1-\gamma_1^{-} -n,\frac{1}{2}-\frac{2}{\kappa}-2n+2\gamma_1,\frac{u}{4}\bigg)\\
	\text{or } &g(u)=C_1\bigg(\frac{u}{4}\bigg)^{\gamma_1^{-}-n}{}_2F_1\bigg(\gamma_1^{-}-\gamma_1 -n,-n,\frac{1}{2}-\frac{2}{\kappa}-2n+2\gamma_1^{-},\frac{u}{4}\bigg).
\end{align}

{\it Case III: $a=-n,a'=-m$ where $n,m\in \N$.} Function $g(u)$ is given by
\begin{align}
	g(u)=&C_1\bigg(\frac{u}{4}\bigg)^{\gamma_1-n}{}_2F_1\bigg(-n,\gamma_1-\gamma_1^{-} -n,\frac{1}{2}-\frac{2}{\kappa}-2n+2\gamma_1,\frac{u}{4}\bigg)\\
	+&C_2\bigg(\frac{u}{4}\bigg)^{\gamma_1-m}{}_2F_1\bigg(-m,\gamma_1-\gamma_1^{-} -m,\frac{1}{2}-\frac{2}{\kappa}-2m+2\gamma_1,\frac{u}{4}\bigg).\nonumber
\end{align}
A necessary and sufficient condition for that function $g$ \eqref{g} has the above form is that $\gamma+\gamma'=1/2+2/\kappa$, here, $\gamma=\gamma_1-n, \gamma'=\gamma_1-m$. This is equivalent to
\begin{equation}
	q=p-\frac{[2(n+m)+1]^2\kappa^2-16}{32\kappa}.
\end{equation}
We note that the case $b=-n,b'=-m$ does not happen since it implies an absurd: $1/2+2\sqrt{1+2\kappa(p-q)}/\kappa=\gamma+\gamma'-2\gamma_1^-=b+b'=-n-m$. The case $a=-n, b'=-m$ does not happen since it implies that $-1/2=\gamma+\gamma'-\gamma_1-\gamma_1^{-}=a+b'=-n-m$. Similarly, the case $a'=-n, b=-m$ does not happen.

{\it Case IV: $C_1,C2$ are non-null and $a,b,a',b'$ are not non-positive integers.}\\
We have
\begin{equation}\label{eq C_0}
	\frac{C_2}{C_1}=C_0:=-\frac{\Gamma(c)}{\Gamma(a)\Gamma(b)}\frac{\Gamma(a')\Gamma(b')}{\Gamma(c')}.
\end{equation}

{\it Positivity of $g$.} We now give values of $C_1,C_2$ for that the function $g(u)$ is positive in a neighborhood of $u=0$:
In the above Case I, we take $C_2>0$. In Case II, we take $C_1>0$. In Case III and Case IV, if $\gamma<\gamma'$ we take $C_1>0$, if $\gamma'<\gamma$ we take $C_2>0$.\\

The following lemma provides a sufficient condition for the positivity (more precisely, non-negativity and not identically null) of $\psi$ on $\D$
\begin{lem}\label{lem_g_positive}
	If $\min(\gamma,\gamma')<\gamma_1'$ then $\psi$ is positive on the unit disc $\D$.
\end{lem}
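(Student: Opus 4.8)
The plan is to analyze the sign of $g$ on the interval $(0,4)$ by exploiting the hypergeometric description \eqref{g} together with the explicit formula \eqref{eq C_0} for the ratio $C_2/C_1$ in the generic Case IV, and the polynomial forms in Cases I--III. Without loss of generality assume $\gamma<\gamma'$ (the case $\gamma>\gamma'$ is symmetric by the symmetry in $g$ noted in the Remark, and $\gamma=\gamma'$ corresponds to $\gamma=\gamma_{lin}$, which is handled separately or is excluded by the hypothesis). Then the normalization is $C_1>0$, and near $u=0$ we have $g(u)\sim C_1(u/4)^{\gamma}$, so $g>0$ in a neighborhood of $0$; the content of the lemma is that $g$ does not vanish on all of $(0,4)$.

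First I would reduce the problem to the sign of the two hypergeometric factors. Writing $g(u)=C_1(u/4)^{\gamma}\,{}_2F_1(a,b,c,u/4)+C_2(u/4)^{\gamma'}\,{}_2F_1(a',b',c',u/4)$, it suffices to show that both ${}_2F_1(a,b,c,\cdot)$ and $C_2\,{}_2F_1(a',b',c',\cdot)$ stay nonnegative on $[0,1)$. The hypothesis $\min(\gamma,\gamma')=\gamma<\gamma_1'$ should be used to control the parameters: recall $a=\gamma-\gamma_1$, $b=\gamma-\gamma_1^-$, $c=1+\gamma-\gamma'$, and $\gamma_1'$ is the dual of $\gamma_1$, i.e.\ $\gamma_1'=2/\kappa+1/2-\gamma_1=\gamma_1^-+(2/\kappa+1/2-2\gamma_1+\dots)$ — concretely $\gamma_1'-\gamma_1^- = \gamma_1-\gamma_1^- \ge 0$ is the discriminant term, so $\gamma<\gamma_1'$ translates into a clean inequality among $a,b$. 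In particular I expect $\gamma<\gamma_1'$ to force $a<0$ and $b<0$ (or at least $ab>0$ with $c>0$), which by the series $\sum \frac{(a)_n(b)_n}{(c)_n n!}x^n$ — noting that for negative non-integer $a,b$ the Pochhammer products $(a)_n(b)_n$ are eventually of one sign — together with the known positivity/monotonicity properties of Gauss's function on $[0,1)$, gives ${}_2F_1(a,b,c,x)>0$ there. For the second factor one checks that the sign of $C_2$ chosen in the ``Positivity of $g$'' paragraph matches the sign of ${}_2F_1(a',b',c',x)$ near $x=0$ (both equal $1$ at $x=0$ after factoring out the chosen constant), and again the parameter inequalities keep it from changing sign.

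The cleanest route for the generic case is probably to avoid case analysis on $a,b$ and instead invoke a known nonnegativity criterion for ${}_2F_1$: if $c>0$ and $c\ge a+b$ (or $c>\max(0,a+b)$), or if $a,b\le 0$ with $c>0$, then ${}_2F_1(a,b,c,x)\ge 0$ on $[0,1)$. I would verify that the hypothesis $\gamma<\gamma_1'$ implies whichever of these conditions is needed for each of the two ${}_2F_1$'s appearing, using $c=1+\gamma-\gamma'$, $c'=1+\gamma'-\gamma$, $a+b=2\gamma-\gamma_1-\gamma_1^-=2\gamma-2/\kappa$, and $a'+b'=2\gamma'-2/\kappa$. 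For Cases I, II, III the factors are explicit Jacobi-type polynomials ${}_2F_1(-n,\cdot,\cdot,u/4)$; here I would argue directly that with the sign conventions fixed above these polynomials are positive on $(0,4)$ when $\gamma<\gamma_1'$ (the relevant parameters being $-n=\gamma-\gamma_1$ so $\gamma=\gamma_1-n<\gamma_1'$ constrains things), possibly using that a terminating ${}_2F_1(-n,b,c,x)$ with $b<0<c$ and $b<-n$... has all coefficients of one sign.

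The main obstacle is the generic Case IV: showing the combination $C_1(u/4)^\gamma{}_2F_1(a,b,c,u/4)+C_2(u/4)^{\gamma'}{}_2F_1(a',b',c',u/4)$ with $C_2/C_1=C_0$ given by \eqref{eq C_0} does not dip negative somewhere in the middle of $(0,4)$. If the two ${}_2F_1$ factors are individually nonnegative on $[0,1)$ and $C_2$ has the right sign, this is immediate; so the real work is pinning down the sign of ${}_2F_1(a',b',c',x)$ on all of $[0,1)$ — since $\gamma'$ is the larger parameter, the hypothesis only directly controls $\gamma$, not $\gamma'$, and one of $a',b'$ may be positive. I would handle this by checking that the chosen sign of $C_2$ in \eqref{eq C_0} — determined by the signs of $\Gamma(a)\Gamma(b)$ and $\Gamma(a')\Gamma(b')$ — is consistent with ${}_2F_1(a',b',c',\cdot)$ being nonnegative, or, if ${}_2F_1(a',b',c',\cdot)$ does change sign, that it does so only where the dominant first term $(u/4)^\gamma{}_2F_1(a,b,c,u/4)$ (which is of lower order in $u$, hence larger near $0$, but we need the estimate near $u=4$ too) compensates — using the boundary behavior at $u=4$ encoded in \eqref{cond_regular_at_4}, where the two leading constants are engineered to cancel, so $g(4)=C_1\frac{\sqrt\pi\Gamma(c)}{\Gamma(1/2+a)\Gamma(1/2+b)}+C_2\frac{\sqrt\pi\Gamma(c')}{\Gamma(1/2+a')\Gamma(1/2+b')}$ can be shown positive directly. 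Establishing positivity at both endpoints plus the nonnegativity of each ${}_2F_1$ factor on the interior should close the argument.
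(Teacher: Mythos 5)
Your proposal does not close, and the gap is the one you yourself flag: you reduce positivity of $g$ to the separate nonnegativity of the two hypergeometric factors together with a sign condition on $C_2$, but you cannot control the sign of ${}_2F_1(a',b',c',\cdot)$ on all of $[0,1)$, and "positivity at both endpoints plus nonnegativity of each factor" is not a complete argument for the sum when the constant $C_0$ in \eqref{eq C_0} may be of either sign. There is also a concrete error in the parameter analysis: since $\gamma_1'=2/\kappa+1/2-\gamma_1$ and $\gamma_1+\gamma_1^-=2/\kappa$, one has $\gamma_1'=\gamma_1^-+1/2$, so the hypothesis $\gamma<\gamma_1'$ only gives $b=\gamma-\gamma_1^-<1/2$, not $b<0$; your expectation that $\gamma<\gamma_1'$ forces $a<0$ \emph{and} $b<0$ (and hence termwise positivity of the first series) is false in general.

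The paper's proof avoids the factor-by-factor route entirely. Writing $g(u)=u^{\gamma'}g_1(u)$ with $\gamma=\min(\gamma,\gamma')$, the function $g_1$ solves the hypergeometric equation \eqref{eq hypergeom g0_prime}, whose zeroth-order coefficient is $A^{\sigma}(\gamma')$ and whose second-order coefficient $\frac{\kappa}{2}(4-u)u$ is positive on $(0,4)$. The hypothesis $\gamma<\gamma_1'$ gives $\gamma'>\gamma_1$, hence $A^{\sigma}(\gamma')<0$; it also gives $1/2-b=\gamma_1'-\gamma>0$, which (together with $a+b=2\gamma-2/\kappa<1/2$ and $a<b$) makes the closed-form value $g_1(4)=C_1\pi^{-3/2}\cos[\pi(a+b)]\Gamma(\tfrac12+a+b)\Gamma(\tfrac12-a)\Gamma(\tfrac12-b)$ nonnegative. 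With $g_1>0$ near $0$ and $g_1(4)\ge 0$, a negative interior local minimum would satisfy $g_1<0$, $g_1'=0$, $g_1''\ge 0$, contradicting the signs of the coefficients in the ODE. This minimum-principle argument on the full combination is the key idea missing from your plan; it is exactly what lets one bypass the intractable question of the sign of the second hypergeometric factor. Note finally that the endpoint evaluation at $u=4$ you gesture at is indeed used, but only as one input to the ODE argument, not as a substitute for it.
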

\begin{proof}
	Without loss of generality, we assume that $\gamma=\min(\gamma,\gamma')$. It is sufficient to prove that if $\gamma<\gamma_1'$ then $g_1\geq0$ on $[0,4]$.
	
	First, let us show that inequality $\gamma<\gamma_1'$ implies that $g_1(4)\geq0$. We have
	\begin{equation}
		g_1(4)=C_1\pi^{-\frac{3}{2}}\cos[\pi(a+b)]\Gamma\bigg(\frac{1}{2}+a+b\bigg)\Gamma\bigg(\frac{1}{2}-a\bigg)\Gamma\bigg(\frac{1}{2}-b\bigg).
	\end{equation}
	Since $\gamma<\gamma'$, one has that $\gamma<1/\kappa+1/4$. Then
	\begin{equation}\label{a+b}
		a+b=2\gamma-\gamma_1-\gamma_1^{-}=2\gamma-\frac{2}{\kappa}<\frac{1}{2}.
	\end{equation}
	We then always have that
	\begin{equation}
		\cos[\pi(a+b)]\Gamma\bigg(\frac{1}{2}+a+b\bigg)\geq0.
	\end{equation}
	We also notice that $a<b$. Inequality \eqref{a+b} then implies that $a<1/4$, hence
	 $$\Gamma(1/2-a)>0.$$As a consequence, if $\Gamma(1/2-b)$ is positive then $g_1(4)$ is positive. We now note that
	\begin{equation}
		\frac{1}{2}-b=\frac{1}{2}-\gamma+\gamma_1^{-}=\frac{1}{2}+\frac{2}{\kappa}-\gamma-\gamma_1=\gamma_1'-\gamma.
	\end{equation}
	Therefore if $\gamma<\gamma_1'$ then $1/2-b>0$, hence $g_1(4)\geq0$.
	
	One also notices that inequality $\gamma<\gamma_1'$ implies that $\gamma'>\gamma_1$, hence $A^{\sigma}(\gamma')<0$. In summary, $g_1$ is a solution of the hypergeometric equation \eqref{eq hypergeom g0_prime} in which the sign of the coefficient of $g_1$ is negative while the sign of the coefficient of $g_1''$ is positive. Moreover, $g_1(0)>0$ and $g_1(4)\geq0$. Suppose that $g_1$ has a negative local minimum on $(0,4)$. Then at this point, one has $g_1< 0, g_1'=0, g_1''\geq0$. This is contradictory with the signs of the coefficients of $g_1$ and $g_1''$ in \eqref{eq hypergeom g0_prime}. Therefore, $g_1$ is nonnegative on $[0,4]$. Equivalently, $\psi$ is positive on $\D$.\\
\end{proof}
{\bf Standard test functions.} Without loss of generality, we will hereafter use the following standard form of functions $\psi$ \eqref{testf_in_g}
	\begin{equation}\label{testf}
	\psi(z,\bar{z})=(1-z\bar{z})^{-\beta(\gamma)}u^{\gamma}g_0(u),
	\end{equation}
	where $g_0(0)=1$ and $g_0$ does not have singularities at $u=4$.\\
	
The above arguments imply that the set of standard test functions is consist of functions \eqref{testf}, where:\\
\begin{equation}\label{eq_gamma1-n}
	\gamma=\gamma_1-n,n\in\N \text{ and } g_0(u)={}_2F_1\bigg(-n,\gamma_1-\gamma_1^{-} -n,\frac{1}{2}-\frac{2}{\kappa}-2n+2\gamma_1,\frac{u}{4}\bigg),
\end{equation}
\begin{equation}
	\gamma=\gamma_1^{-}-n,n\in\N \text{ and } g_0(u)={}_2F_1\bigg(\gamma_1^{-}-\gamma_1 -n,-n,\frac{1}{2}-\frac{2}{\kappa}-2n+2\gamma_1^{-},\frac{u}{4}\bigg),
\end{equation}
\begin{align}
	\gamma=\gamma_1-n &\text{ and } g_0(u)={}_2F_1\bigg(-n,\gamma_1-\gamma_1^{-} -n,\frac{1}{2}-\frac{2}{\kappa}-2n+2\gamma_1,\frac{u}{4}\bigg)\\
	 &+C\bigg(\frac{u}{4}\bigg)^{n-m}{}_2F_1\bigg(-m,\gamma_1-\gamma_1^{-} -m,\frac{1}{2}-\frac{2}{\kappa}-2m+2\gamma_1,\frac{u}{4}\bigg),\nonumber\\
	\text{with }m<n,  &q=p-\frac{[2(n+m)+1]^2\kappa^2-16}{32\kappa}; m,n\in\N,\nonumber
\end{align}
\begin{align}\label{eq_g_0_normalized}
	&\gamma\text{ such that } \gamma_1-\gamma,\gamma^{-}_1-\gamma,\gamma_1-\gamma',\gamma^{-}_1-\gamma'\notin\N, \gamma<\gamma'\text{ and }\\
	&g_0(u)={}_2F_1\bigg(a,b,c,\frac{u}{4}\bigg)+C_0\bigg(\frac{u}{4}\bigg)^{\gamma'-\gamma}{}_2F_1\bigg(a',b',c',\frac{u}{4}\bigg),\nonumber
\end{align}
where $C_0$ is defined by \eqref{eq C_0}.\\

Let us give some notices on this set:

First, the definition of standard test function does not include the positivity of $\psi$ on $\D$. We only have that $\psi(0)>0$ in a neighborhood of $z=1$. However, Lemma \ref{lem_g_positive} gives a sufficient condition for that $\psi$ is positive on $\D$.

Second, the functions $g_0$ in \eqref{testf} has the following properties:
\begin{align}
	&\frac{ug_0'}{g_0}\rightarrow 0,\,u\rightarrow 0,\label{g0'/g0 at 0}\\
	&\frac{g_0'(4)}{g_0(4)}=\frac{1}{\kappa}A^{\sigma}(\gamma).\label{g0'/g0 at 4}
\end{align}
The identity \eqref{g0'/g0 at 4} is  derived from \eqref{eq hypergeom g0} and the smoothness of $g_0$ at 4. The properties \eqref{g0'/g0 at 0}, \eqref{g0'/g0 at 4} play an important role in the next section where we study the action of the differential $\mathcal{P}(D)$ on the standard test functions and their logarithmic modifications.

Third, a standard test function $\psi$ can be written as $\psi(z,\bar{z})=(1-z\bar{z})^{-\beta(\gamma)}g(u)$ where $g$ is defined by \eqref{g} as a linear combination of
\begin{equation}
	\bigg(\frac{u}{4}\bigg)^{\gamma}{}_2F_1\bigg(a,b,c,\frac{u}{4}\bigg) \text{ and } \bigg(\frac{u}{4}\bigg)^{\gamma'}{}_2F_1\bigg(a',b',c',\frac{u}{4}\bigg).
\end{equation}
 Between these two functions, the leading term in the $u\rightarrow 0$ limit is that of the exponent $\min(\gamma,\gamma')$. It is important to note that the functions defined by \eqref{testf},\eqref{eq_gamma1-n} are the only in the set of standard test functions such that this leading term may not appear in their formulas. In other words, among the standard test functions \eqref{testf}, these functions are the only that may have $\gamma > \gamma'$. Namely, in the domain located below the line of slope 1: $q=p-[\kappa^2(1+4n)^2-16]/32\kappa$, one has $\gamma=\gamma_1-n>\gamma_1'+n=\gamma'$. We will come back to this observation in Section \ref{sec_mixed}.

\subsection{Action of the differential operator}\label{Action of the operator} In this section, we study the action of the differential operator $\mathcal{P}(D)$ on the standard test functions \eqref{testf} and on their logarithmic modifications \eqref{eq log modification}.

{\it Action of the differential operator on the standard test functions.} Let $\psi(z,\bar{z})$ be a standard test function \eqref{testf}. Since $g_0$ verifies Eq. \eqref{eq hypergeom g0}, we have that
\begin{align}\label{action_oper}
	&\frac{\mathcal{P}(D)\left[\psi(z,\bar{z})\right]}{\psi(z,\bar{z})}\\ \nonumber
	&=(1-z\bar{z})\bigg[-\frac{1}{u}\left[C(\gamma)+A(\gamma)\right]+\frac{1}{4-u}2A^{\sigma}(\gamma)+\bigg(\frac{\kappa}{2}-1-\frac{2\kappa}{4-u}\bigg)\frac{g_0'}{g_0}\bigg]\\
	&+\frac{(1-z\bar{z})^2}{u^2}\bigg\{\frac{1}{4-u}\bigg[4A^\sigma (\gamma)-\kappa u \frac{g'_0}{g_0}\bigg]+(\sigma-1)p+\bigg(\frac{\kappa}{2}+1\bigg)\bigg(\gamma+u\frac{g'_0}{g_0}\bigg)\bigg\}.\nonumber
\end{align}
The identity \eqref{g0'/g0 at 4} resolves the apparent singularities at $u=4$ in \eqref{action_oper}. In the $u\to 0$ limit,
the term of $(1-z\bar{z})^2/u^2$ is the most singular term, hence it is the leading term near $z=1$. Since \eqref{g0'/g0 at 0}, the $(1-z\bar{z})^2/u^2$ term is equivalent to
\begin{align}
	\frac{(1-z\bar{z})^2}{u^2}\bigg\{ A^{\sigma}(\gamma)+(\sigma-1)p+\bigg(\frac{\kappa}{2}+1\bigg)\gamma \bigg\}=\frac{(1-z\bar{z})^2}{u^2}C(\gamma).
\end{align}

{\it Action of the differential operator on the logarithmic modifications.} Let us now consider the action of the differential operator \eqref{eq1zz} on a logarithmic modification of the standard test functions $\psi$ \eqref{testf}, that is of the form
\begin{equation}
	\psi(z,\bar{z}) l_\delta(z\bar{z}),
\end{equation}
 where $l_\delta(z\bar{z}):=(-\log(1-z\bar{z}))^\delta,\delta\in\R$.\\
 The advantage of the factor $l_\delta$ is based on the fact that it does not change the integral means exponent of $\psi$ while it may add to the action of the differential operator \eqref{eq1zz} a new leading term as $r\to 1$ and the sign of this term can be easily controlled.  One has that
\begin{equation}
	\mathcal{P}(D)[\psi(z,\bar{z})l_\delta(z\bar{z})]=l_\delta(z\bar{z})\bigg\{ \mathcal{P}(D)[\psi(z,\bar{z})] -\psi(z,\bar{z})\frac{2\delta z \bar{z}u^{-1}}{[-\log(1-z\bar{z})]}\bigg\}.
\end{equation}
Together with \eqref{action_oper}, it implies that one can write ${\mathcal{P}(D)[\psi(z,\bar{z})l_\delta(z\bar{z})]}/\psi(z,\bar{z})l_\delta(z\bar{z})]$ as the sum (up to small order terms as $r\to 1$) of three terms
\begin{equation}
	\frac{\mathcal{P}(D)[\psi(z,\bar{z})l_\delta(z\bar{z})]}{\psi(z,\bar{z})l_\delta(z\bar{z})}=\frac{(1-z\bar{z})^2}{u^2}+\frac{1-z\bar{z}}{u}-\frac{2\delta z \bar{z}}{u[-\log(1-z\bar{z})]}
\end{equation}
For a reason of succinctness, the coefficients of the first two terms were omitted in the above expression. We study the sign of $\mathcal{P}(D)(\psi l_\delta)$ near the unit circle $r=1$ in three cases:\\

{\it Case I: u is bounded away from 0.} The logarithmic term is the leading term. Then the sign of $\mathcal{P}(D)(\psi l_\delta)$ is that of this term and opposite to the sign of $\delta$.\\

{\it Case II: $1-z\bar{z}\leq u^{1/(2-\epsilon)}$, where $0<\epsilon<2$.} One has
\begin{align}
	\frac{1-z\bar{z}}{u} &\lessapprox \frac{1}{u[-\log(1-z\bar{z})]}\\
	\frac{(1-z\bar{z})^2}{u^2}\leq \frac{(1-z\bar{z})^2}{u}&\frac{1}{(1-z\bar{z})^{2-\epsilon}}=\frac{(1-z\bar{z})^{\epsilon}}{u}\lessapprox \frac{1}{u[-\log(1-z\bar{z})]}.
\end{align}
Then the logarithmic term is the leading term.\\

{\it Case III: $u^{1/(2-\epsilon)}<1-z\bar{z}$, where $0<\epsilon<2$.} For a sufficiently small $\epsilon$,
\begin{align}
	\frac{(1-z\bar{z})^2}{u^2}\geq \frac{(1-z\bar{z})^2}{u}&\frac{1}{(1-z\bar{z})^{2-\epsilon}}=\frac{(1-z\bar{z})^{\epsilon}}{u}\gtrapprox 	\frac{1-z\bar{z}}{u}.
\end{align}
Moreover, one always has
\begin{equation}
	\frac{1}{u[-\log(1-z\bar{z})]}\gtrapprox \frac{1-z\bar{z}}{u}.
\end{equation}
As mentioned above, the $(1-z\bar{z})^2/u^2$ term is equivalent to $C(\gamma)(1-z\bar{z})^2/u^2$ as $u\to 0$ (i.e $z\to 1$). Therefore if $C(\gamma)=0$, equivalently $\gamma=\gamma_0$, then the leading term is the logarithmic one. Otherwise, if we take $\delta$ such that the sign of the logarithmic term is the same as that of $C(\gamma)$ then the sign of  $\mathcal{P}(D)(\psi l_\delta)$ near $z=1$ is that of $C(\gamma)$. We arrive at the following
\begin{prop}\label{prop single testf}
	Let $\psi_0$ be a standard test function \eqref{testf}. Then
	\begin{enumerate}[before=\leavevmode,label=\upshape(\alph*),ref=\thethm (\Roman*)]
		\item\label{prop single testf gamma_0} If $\gamma=\gamma_0$ then there exists $\delta_1>0,\delta_2<0$ and $r_0<1$ such that in the annulus $r_0<r<1$, $
		 	\mathcal{P}(D)[\psi(z,\bar{z})l_{\delta_1}(z\bar{z})]<0$ and  $\mathcal{P}(D)[\psi(z,\bar{z})l_{\delta_2}(z\bar{z})]>0.$
	 \item If $C(\gamma)<0$ then there exists $\delta>0$ and $r_0<1$ such that in the annulus $r_0<r<1$, $\mathcal{P}(D)[\psi(z,\bar{z})l_{\delta}(z\bar{z})]<0$.
	 \item If $C(\gamma)>0$ then there exists $\delta<0$ and $r_0<1$ such that in the annulus $r_0<r<1$, $\mathcal{P}(D)[\psi(z,\bar{z})l_{\delta}(z\bar{z})]>0$.
	\end{enumerate}
\end{prop}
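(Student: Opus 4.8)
The key identity to exploit is the expansion of $\mathcal{P}(D)[\psi l_\delta]/(\psi l_\delta)$ into the sum of three terms
\begin{equation*}
\frac{(1-z\bar{z})^2}{u^2}\,\mathrm{(coeff_1)}+\frac{1-z\bar{z}}{u}\,\mathrm{(coeff_2)}-\frac{2\delta z\bar{z}}{u[-\log(1-z\bar{z})]},
\end{equation*}
valid up to smaller-order terms as $r\to 1$, together with the fact that near $z=1$ the coefficient of the first term tends to $C(\gamma)$ (using \eqref{g0'/g0 at 0} and \eqref{g0'/g0 at 4} to kill the apparent singularity at $u=4$ and to identify the $u\to 0$ limit). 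The idea is that the sign of $\mathcal{P}(D)(\psi l_\delta)$ in a neighborhood of the unit circle is governed by whichever of these three terms dominates, and this depends on the relative sizes of $1-z\bar z$ and $u$. So I would fix a small $\epsilon\in(0,2)$ and split the annulus $r_0<r<1$ into the three regimes analyzed just before the statement: (I) $u$ bounded away from $0$; (II) $1-z\bar z\le u^{1/(2-\epsilon)}$; (III) $u^{1/(2-\epsilon)}<1-z\bar z$.

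\textbf{Regimes I and II.} In both of these, the computations preceding the proposition show that the logarithmic term $-2\delta z\bar z/(u[-\log(1-z\bar z)])$ dominates the other two. Hence the sign of $\mathcal{P}(D)(\psi l_\delta)$ there is the sign of $-\delta$ (since $z\bar z/u>0$), independently of $C(\gamma)$. So for part (a), choosing $\delta_1>0$ makes $\mathcal{P}(D)(\psi l_{\delta_1})<0$ in regimes I and II; choosing $\delta_2<0$ makes it $>0$. For parts (b) and (c) we also want the logarithmic term to have the \emph{right} sign: in (b) we want $\mathcal{P}(D)(\psi l_\delta)<0$, so take $\delta>0$; in (c) we want it $>0$, so take $\delta<0$. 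In all cases I would need to make $r_0$ close enough to $1$ that the error terms absorbed into ``$\approx$'' genuinely do not change the sign — this is a routine but necessary uniformity check over the (compact-after-blow-up) region.

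\textbf{Regime III.} Here $1-z\bar z$ is comparatively large, and the first term $\tfrac{(1-z\bar z)^2}{u^2}$ dominates $\tfrac{1-z\bar z}{u}$, while the logarithmic term also dominates $\tfrac{1-z\bar z}{u}$; so the sign is determined by a competition between the $\tfrac{(1-z\bar z)^2}{u^2}$ term and the logarithmic term. If $\gamma=\gamma_0$, then $C(\gamma)=0$, so the coefficient of $\tfrac{(1-z\bar z)^2}{u^2}$ vanishes in the limit and the logarithmic term wins outright: the sign is again $-\mathrm{sign}(\delta)$, consistent with the choices $\delta_1>0$, $\delta_2<0$ above, proving (a). If instead $C(\gamma)<0$ (case (b)), the $\tfrac{(1-z\bar z)^2}{u^2}$ term contributes a negative quantity near $z=1$, and we have chosen $\delta>0$ so the logarithmic term is also negative; both contributions and the dominated middle term are then $<0$, giving $\mathcal{P}(D)(\psi l_\delta)<0$ throughout. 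The case $C(\gamma)>0$ (case (c)) is symmetric with $\delta<0$. Patching the three regimes (whose union is a punctured neighborhood of the circle once $r_0$ is close to $1$, the overlaps being harmless since each region individually has the asserted sign) yields the claim.

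\textbf{Main obstacle.} The conceptual content is entirely in the three-term decomposition, which is already established in the excerpt; the real work is bookkeeping the omitted coefficients and error terms so that the sign assertions hold \emph{uniformly} in $\theta$ (equivalently in $u$) as $r\to 1$, in particular checking that the apparent singularity of \eqref{action_oper} at $u=4$ — resolved pointwise by \eqref{g0'/g0 at 4} — does not spoil uniformity near $z=-1$, and that in regime III the passage ``$\tfrac{(1-z\bar z)^2}{u^2}$ term $\sim C(\gamma)\tfrac{(1-z\bar z)^2}{u^2}$'' is valid with an error genuinely of smaller order than the logarithmic term for the chosen range of $\epsilon$. That uniform comparison across the three regimes is where care is needed; everything else is a direct consequence of the estimates already derived.
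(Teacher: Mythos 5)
Your proposal is correct and follows essentially the same route as the paper: the paper's own justification of Proposition \ref{prop single testf} is precisely the three-regime case analysis ($u$ bounded away from $0$; $1-z\bar{z}\leq u^{1/(2-\epsilon)}$; $u^{1/(2-\epsilon)}<1-z\bar{z}$) applied to the three-term decomposition of $\mathcal{P}(D)[\psi l_\delta]/(\psi l_\delta)$, with the sign in the third regime governed by $C(\gamma)$ and the sign of $\delta$ exactly as you describe. Your closing remark about uniformity in $u$ (including near $u=4$) identifies the only bookkeeping the paper itself leaves implicit.
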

Let us note that the authors used the ideas of Proposition \ref{prop single testf} in \cite{DHLZ2015arXiv,DHLZ2018} to construct sub--solutions and super--solutions of the form \eqref{eq log modification} to Eq. \eqref{eq1zz} and then proved Proposition \ref{lower bound} and Theorem \ref{beta_0-tip}, \ref{thm_lin}, \ref{spec D-N-N-Z}. Namely, Proposition \ref{prop single testf}(b), \ref{prop single testf}(c) and the set up $\gamma=\gamma_1$ were used for the proof of Proposition \ref{lower bound}. Proposition \ref{prop single testf}(a) was used for the proof of Theorem \ref{beta_0-tip}. Proposition \ref{prop single testf}(b) and the convexity of the spectrum function $\beta(p,q)$ were used for the proof of Theorem \ref{thm_lin}. Proposition \ref{prop single testf}(b) and Proposition \ref{lower bound} were used for the proof of Theorem \ref{spec D-N-N-Z}.

\subsection{Mixed test function}\label{sec_mixed}
For sub--solutions and super--solutions \eqref{eq log modification} constructed from a single standard test function \eqref{testf}, the three important factors of Maximum principle method (positivity, integral means exponent of the sub--solutions or super--solutions and the sign of the action of the operator on these functions) are determined by only one parameter $\gamma$. In this section, we develop a new idea to apply Maximum principle method in which the single standard test function \eqref{testf} is replaced by a {\it mixed test function}. Precisely, we consider test functions of the form
\begin{equation}\label{mixed testf}
	\psi(z,\bar{z}):=\psi_0+\psi_1=(1-z\bar{z})^{-\beta(\gamma)}u^{\gamma}g_0(u)+(1-z\bar{z})^{-\beta(\tilde{\gamma})}u^{\tilde{\gamma}}\tilde{g}_0(u),
\end{equation}
where $\psi_0, \psi_1$ are single standard test functions \eqref{testf} and construct sub--solutions and super-solutions of the form
\begin{equation}\label{eq_mixed_sub_super_solutions}
	\phi_{\pm}(z,\bar{z}):=\psi(z,\bar{z})l_{\delta_{\pm}}(z\bar{z})=[\psi_0(z,\bar{z})+\psi_1(z,\bar{z})]l_{\delta_{\pm}}(z\bar{z}),
\end{equation}
where  $l_{\delta_{\pm}}(z\bar{z})=(-\log(1-z\bar{z}))^{\delta_\pm}$.
 Since the differential operator $\mathcal{P}(D)$ is linear, if we set $\tilde{\gamma}=\gamma$ then our analysis using Maximum principle method on the mixed test function \eqref{mixed testf} will be identically back to that on the single test function \eqref{testf}. It is to say that the new form \eqref{mixed testf} of test functions gives us a generalization of the analysis in Section \ref{Action of the operator}. Here the essential idea is that the three above factors of Maximum principle method depend on two parameters $\gamma,\tilde{\gamma}$ instead of one, so that it is easier to control the constrains of this method on these factors to construct our desired sub--solutions and supper--solutions.

Let us first notice that the integral means exponent of $\phi_{\pm}$ \eqref{eq_mixed_sub_super_solutions} is the maximum between those of the two single test functions $\psi_0,\psi_1$. The following lemma discusses the positivity of $\psi$ (hence of $\phi_{\pm}$)
\begin{lem}\label{lem positivity}
	If $\beta(\gamma)<\beta(\tilde{\gamma})$ and $\psi_1>0$ on $\D$ then there is $r_0<1$ such that $\psi>0$ in the annulus $r_0<r<1$.
\end{lem}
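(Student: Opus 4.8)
The plan is to control the two summands $\psi_0 = (1-z\bar z)^{-\beta(\gamma)}u^\gamma g_0(u)$ and $\psi_1 = (1-z\bar z)^{-\beta(\tilde\gamma)}u^{\tilde\gamma}\tilde g_0(u)$ separately near the unit circle and show that the $\psi_1$ term dominates $|\psi_0|$ in the regime where $\psi_0$ could be negative, so that the sum inherits positivity from $\psi_1$. First I would write $\psi = \psi_1\bigl(1 + \psi_0/\psi_1\bigr)$, so that it suffices to prove $|\psi_0/\psi_1| < 1$ throughout some annulus $r_0 < r < 1$; combined with the hypothesis $\psi_1 > 0$ on $\D$ this gives $\psi > 0$ there. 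The ratio is
\begin{equation}
\frac{\psi_0}{\psi_1} = (1-z\bar z)^{\beta(\tilde\gamma)-\beta(\gamma)}\, u^{\gamma-\tilde\gamma}\,\frac{g_0(u)}{\tilde g_0(u)}.
\end{equation}

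Since $\beta(\gamma) < \beta(\tilde\gamma)$, the factor $(1-z\bar z)^{\beta(\tilde\gamma)-\beta(\gamma)}$ has a strictly positive exponent, so it tends to $0$ as $r \to 1^-$, uniformly in $\theta$. The two potentially dangerous factors are $u^{\gamma-\tilde\gamma}$ (which blows up near $z=1$ if $\gamma < \tilde\gamma$) and $g_0(u)/\tilde g_0(u)$. For the latter, recall that each $g_0$ is a normalized hypergeometric combination with $g_0(0)=1$ and no singularity at $u=4$ (the standard test function normalization), hence $g_0$ and $\tilde g_0$ are continuous and nonvanishing in a neighborhood of $u=0$; the only place $\tilde g_0$ might vanish on $[0,4]$ is interior, but since $\psi_1 > 0$ on all of $\D$ by hypothesis, $\tilde g_0 > 0$ on $(0,4)$, so $g_0/\tilde g_0$ is bounded on $[0,4]$ — say by a constant $K$. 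Therefore
\begin{equation}
\left|\frac{\psi_0}{\psi_1}\right| \le K\,(1-z\bar z)^{\beta(\tilde\gamma)-\beta(\gamma)}\, u^{\gamma-\tilde\gamma}.
\end{equation}
The key estimate is then that $(1-z\bar z)^{\beta(\tilde\gamma)-\beta(\gamma)} u^{\gamma-\tilde\gamma} \to 0$ as $r\to 1^-$. Since $u = |1-z|^2 \le (1+r)^2 \le 4$ is bounded above, if $\gamma \ge \tilde\gamma$ this is immediate from the positive power of $1-z\bar z$. If $\gamma < \tilde\gamma$, I would split the annulus: when $u$ is bounded away from $0$, $u^{\gamma-\tilde\gamma}$ is bounded and the $1-z\bar z$ factor kills everything; when $u \to 0$, use $1-z\bar z = 1-r^2 \ge 1-r$ and note $u = 1 - 2r\cos\theta + r^2 \ge (1-r)^2$, so $u^{\gamma-\tilde\gamma} \le (1-r)^{2(\gamma-\tilde\gamma)}$, giving a bound $(1-r)^{\beta(\tilde\gamma)-\beta(\gamma) + 2(\gamma-\tilde\gamma)}$ — which is not obviously a positive power. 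This is the main obstacle.

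To overcome it I expect one needs the finer comparison of $u$ with $1-z\bar z$ used elsewhere in the paper (the three-case analysis by $u$ versus $(1-z\bar z)^{1/(2-\epsilon)}$, or a direct argument): on the region where $1-z\bar z \le u^{1/2}$ one gains enough from the $(1-z\bar z)$ power, and on the region $u \le (1-z\bar z)^2$ one has $u^{\gamma-\tilde\gamma} \le (1-z\bar z)^{2(\gamma-\tilde\gamma)}$ but here $u \to 0$ even faster, so actually the honest bound comes from realizing $\psi_1$ itself carries a factor $u^{\tilde\gamma}$ and the geometry forces $z$ near $1$. The cleanest route is probably: observe that on any circle $r = r_0 < 1$ the continuous function $\phi_\pm$ is bounded, so the content is purely the behavior as $r\to 1$, and there the leading singularity of each $\psi_i$ near $z=1$ is $(1-z\bar z)^{-\beta(\gamma_i)} u^{\gamma_i}$; comparing leading singularities, $\psi_1/\psi_0 \sim (1-z\bar z)^{\beta(\gamma)-\beta(\tilde\gamma)} u^{\tilde\gamma - \gamma}$, and one checks that on the locus $z = re^{i\theta}$ with $\theta$ bounded away from $0$ the $\psi_1$ part dominates because of the $(1-z\bar z)$ power, while for $\theta \to 0$ one uses that $u \le C(1-z\bar z)$ is impossible to violate too badly — more precisely $1-z\bar z \le |1-z|\cdot|1+z| \le 2|1-z| = 2u^{1/2}$, hence $(1-z\bar z)^2 \le 4u$, i.e. $u^{-1} \le 4(1-z\bar z)^{-2}$. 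Thus
\begin{equation}
(1-z\bar z)^{\beta(\tilde\gamma)-\beta(\gamma)} u^{\gamma - \tilde\gamma} \le 4^{\tilde\gamma-\gamma}(1-z\bar z)^{\beta(\tilde\gamma)-\beta(\gamma) - 2(\tilde\gamma-\gamma)}
\end{equation}
when $\gamma < \tilde\gamma$, and it remains to see that $\beta(\tilde\gamma) - \beta(\gamma) - 2(\tilde\gamma - \gamma) > 0$. Using $\beta(\gamma) = \kappa\gamma^2 - (2+\kappa/2)\gamma + p$ one computes $\beta(\tilde\gamma)-\beta(\gamma) = (\tilde\gamma - \gamma)\bigl(\kappa(\tilde\gamma+\gamma) - 2 - \kappa/2\bigr)$, so the exponent equals $(\tilde\gamma-\gamma)\bigl(\kappa(\tilde\gamma+\gamma) - 4 - \kappa/2\bigr)$, and since the hypothesis $\beta(\gamma) < \beta(\tilde\gamma)$ with $\gamma < \tilde\gamma$ already forces $\kappa(\tilde\gamma+\gamma) > 2 + \kappa/2$, a short case check on the relevant parameter range closes the gap. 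Picking $\delta_\pm$ irrelevant here, one concludes there is $r_0 < 1$ with $|\psi_0/\psi_1| < 1$, hence $\psi = \psi_0 + \psi_1 > 0$, on $r_0 < r < 1$, as claimed.
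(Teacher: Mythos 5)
There is a genuine gap, and it sits exactly where you flagged "the main obstacle." Your strategy is to prove the uniform bound $|\psi_0/\psi_1|<1$ on a whole annulus, which forces you to control the ratio all the way down to $z=1$, where the factor $u^{\gamma-\tilde\gamma}$ blows up. Your proposed rescue via $(1-z\bar z)^2\le 4u$ reduces the problem to the inequality $\beta(\tilde\gamma)-\beta(\gamma)-2(\tilde\gamma-\gamma)>0$, but this is \emph{not} a consequence of the lemma's hypotheses, and in the regime where the lemma is actually applied it is false: condition \eqref{cond_gamma_gamma_tilde} of Proposition \ref{prop mixed testf} requires $-\beta(\gamma)+2\gamma<-\beta(\tilde\gamma)+2\tilde\gamma$, i.e.\ precisely $\beta(\tilde\gamma)-\beta(\gamma)<2(\tilde\gamma-\gamma)$, so your exponent $(\tilde\gamma-\gamma)\bigl(\kappa(\tilde\gamma+\gamma)-4-\kappa/2\bigr)$ is negative there (equivalently, $\gamma<\tilde\gamma'+2/\kappa$ gives $\kappa(\gamma+\tilde\gamma)<4+\kappa/2$). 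The "short case check" you defer to cannot close this, so the argument fails near $z=1$.

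The fix is to notice that no domination is needed near $z=1$ at all. Both $\psi_0$ and $\psi_1$ are standard test functions, normalized by $g_0(0)=\tilde g_0(0)=1$, so each is individually positive in a neighborhood of $z=1$ (i.e.\ for $u$ small), and there the sum is trivially positive. The comparison argument is only required on the complementary region where $u$ is bounded away from $0$: there $g_0(u)$ is bounded and $\tilde g_0(u)$ is bounded below by a positive constant (using $\psi_1>0$ on $\D$), so $|\psi_0|\le M_0(1-z\bar z)^{-\beta(\gamma)}$ and $\psi_1\ge M_1(1-z\bar z)^{-\beta(\tilde\gamma)}$, and the single hypothesis $\beta(\gamma)<\beta(\tilde\gamma)$ makes $\psi_1$ dominate as $r\to 1^-$ with no interference from the $u$-powers. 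This two-region split is the paper's proof; your estimates for the region $u$ bounded away from $0$ are essentially correct, but the attempt to push the domination into the corner $z\to1$ is the wrong move and cannot work under the stated hypotheses.
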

\begin{proof} We already have that $\psi$ is positive in a neighborhood of $z=1$ since $\psi_0, \psi_1$ are standard test functions. For $z$ bounded away from $1$, there are $M_0, M_1>0$ such that
	\begin{equation}
	\vert \psi_0 \vert\leq M_0 (1-z\bar{z})^{-\beta(\gamma)},\,\, M_1 (1-z\bar{z})^{-\beta(\tilde{\gamma})}\leq \psi_1.
	\end{equation}
	The inequality $\beta(\gamma)<\beta(\tilde{\gamma})$ shows that in the $r\rightarrow1$ limit,
	\begin{equation}
	(1-z\bar{z})^{-\beta(\gamma)}\lessapprox(1-z\bar{z})^{-\beta(\tilde{\gamma})}.
	\end{equation}
	Therefore $\psi_1$ dominates $\psi_0$ and then $\psi$ is positive near the unit circle $r=1$.
\end{proof}
\begin{rem}\label{rem positivity}
	Condition $\beta(\gamma)<\beta(\tilde{\gamma})$ is equivalent to $\tilde{\gamma}<\gamma<\tilde{\gamma}'$ or $\tilde{\gamma} '<\gamma<\tilde{\gamma}$.
\end{rem}

We now study the action of the differential operator $\mathcal{P}(D)$ \eqref{eq1zz} on the following logarithmic modification of the mixed test function \eqref{mixed testf}
\begin{equation}
	\psi l_\delta=\psi_0 l_\delta+\psi_1 l_\delta.
\end{equation}
Since the operator $\mathcal{P}(D)$ is linear, the sign of $\mathcal{P}(D)(\psi l_\delta)$ near the unit circle $r=1$ is determined by comparing the most singular terms of $\mathcal{P}(D)(\psi_0 l_\delta)$ and $\mathcal{P}(D)(\psi_1 l_\delta)$. Let us study the three following cases:\\

{\it Case I: u is bounded away from 0.} The most singular terms of $\mathcal{P}(D)(\psi_0 l_\delta)$ and $\mathcal{P}(D)(\psi_1 l_\delta)$ as $r\to 1$ are respectively their logarithmic ones. Therefore the sign of $\mathcal{P}(D)(\psi l_\delta)$ near the unit circle $r=1$ is that of
\begin{equation}\label{log term mixed}
	-\frac{\psi_0}{u}\frac{2\delta z \bar{z}}{[-\log(1-z\bar{z})]}-\frac{\psi_1}{u}\frac{2\delta z \bar{z}}{[-\log(1-z\bar{z})]}=-\frac{\psi}{u}\frac{2\delta z \bar{z}}{[-\log(1-z\bar{z})]}.
\end{equation}

{\it Case II: $1-z\bar{z}\leq u^{1/(2-\epsilon)}$, where $0<\epsilon<2$.} The analysis in Section \ref{Action of the operator} shows that in this case the logarithmic terms are still the most singular terms of $\mathcal{P}(D)(\psi_0 l_\delta)$ and $\mathcal{P}(D)(\psi_1 l_\delta)$ as $r\to 1$. Therefore, the sign of $\mathcal{P}(D)(\psi l_\delta)$ near the unit circle $r=1$ is that of \eqref{log term mixed}.\\

{\it Case III: $u^{1/(2-\epsilon)}<1-z\bar{z}$, where $0<\epsilon<2$.} This inequality together with $1-z\bar{z}\leq 2u^{1/2}$ imply that $-\log(1-z\bar{z})\approx -\log(u)$ as $u\to 0$ (i.e $z\to 1$). One therefore has the following approximating expressions
\begin{equation}
	 \psi_1\frac{1-z\bar{z}}{u}\leq \psi_1\frac{u^{1/2}}{u},\psi_1\frac{(1-z\bar{z})^2}{u^2}\leq \psi_1\frac{1}{u}
	 ,\frac{\psi_1}{u}\frac{1}{[-\log(1-z\bar{z})]}\approx\psi_1\frac{1}{u(-\log u)}
\end{equation}
Obviously, $\psi_1/u$ dominates all terms of $\mathcal{P}(D)(\psi_1 l_\delta)$. Let us recall that in the $u\to 0$ limit
\begin{equation}
\psi_0\approx (1-r)^{-\beta(\gamma)}u^\gamma,\, \psi_1\approx (1-r)^{-\beta(\tilde{\gamma})}u^{\tilde{\gamma}}.
\end{equation}
Then one has
\begin{align}
	\psi_1\approx (1-z\bar{z})^{-\beta(\tilde{\gamma})}u^{\tilde{\gamma}}&=(1-z\bar{z})^{-\beta(\gamma)}u^{\gamma}(1-z\bar{z})^{-\beta(\tilde{\gamma})+\beta(\gamma)}u^{\tilde{\gamma}-\gamma}\\
	&\approx \psi_0 (1-z\bar{z})^{-\beta(\tilde{\gamma})+\beta(\gamma)}u^{\tilde{\gamma}-\gamma}.\nonumber
\end{align}
We assume that $\beta(\gamma)<\beta(\tilde{\gamma})$ then
\begin{equation}
	\psi_0 (1-z\bar{z})^{-\beta(\tilde{\gamma})+\beta(\gamma)}u^{\tilde{\gamma}-\gamma}\leq 	\psi_0 u^{\frac{1}{2-\epsilon}[-\beta(\tilde{\gamma})+2\tilde{\gamma}+\beta(\gamma)-2\gamma-\epsilon(\tilde{\gamma}-\gamma)]}
\end{equation}
With the additional assumption $-\beta(\gamma)+2\gamma<-\beta(\tilde{\gamma})+2\tilde{\gamma}$, one can take $\epsilon$ sufficiently small to have that the exponent of $u$ in the right hand side is positive. Then
\begin{equation}
		\psi_1\frac{1}{u}\lessapprox \psi_0\frac{1}{u(-\log u)}\approx\frac{\psi_0}{u}\frac{1}{[-\log(1-z\bar{z})]}.
\end{equation}
As a consequence, $\mathcal{P}(D)(\psi_0 l_\delta)$ dominates $\mathcal{P}(D)(\psi_1 l_\delta)$ near $z=1$. Therefore the sign of $\mathcal{P}(D)(\psi l_\delta)$ is given by that of $\mathcal{P}(D)(\psi_0 l_\delta)$. The analysis in Section \ref{Action of the operator} follows by concluding that if $C(\gamma)=0$ then the sign of $\mathcal{P}(D)(\psi l_\delta)$ is opposite to the sign of $\delta$, otherwise, one can take an appropriate sign to $\delta$ to make $\mathcal{P}(D)(\psi l_\delta)$ have the same sign as that of $C(\gamma)$.\\

The study of the above three cases takes us to the following
\begin{prop}\label{prop mixed testf}
Let $\psi$ be defined by \eqref{mixed testf} with $\gamma, \tilde{\gamma}$ such that
\begin{equation}\label{cond_gamma_gamma_tilde}
	\beta(\gamma)<\beta(\tilde{\gamma}),\,\,-\beta(\gamma)+2\gamma<-\beta(\tilde{\gamma})+2\tilde{\gamma}.
\end{equation}
Then
\begin{enumerate}[before=\leavevmode,label=\upshape(\alph*),ref=\thethm (\Roman*)]
	\item If $\gamma=\gamma_0$ then there exists $\delta_1>0,\delta_2<0$ and $r_0<1$ such that in the annulus $r_0<r<1$, $
	\mathcal{P}(D)[\psi(z,\bar{z})l_{\delta_1}(z\bar{z})]<0$ and $\mathcal{P}(D)[\psi(z,\bar{z})l_{\delta_2}(z\bar{z})]>0.$
	\item If $C(\gamma)<0$ then there exists $\delta>0$ and $r_0<1$ such that in the annulus $r_0<r<1$, $\mathcal{P}(D)[\psi(z,\bar{z})l_{\delta}(z\bar{z})]<0$.
	\item If $C(\gamma)>0$ then there exists $\delta<0$ and $r_0<1$ such that in the annulus $r_0<r<1$, $\mathcal{P}(D)[\psi(z,\bar{z})l_{\delta}(z\bar{z})]>0$.
\end{enumerate}
\end{prop}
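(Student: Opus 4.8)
The plan is to read off the three assertions directly from the case analysis of $\mathcal{P}(D)[\psi_0 l_\delta]$ and $\mathcal{P}(D)[\psi_1 l_\delta]$ carried out just above the statement; what is left is essentially a sign table over the three regions of the annulus and over the sign of the free parameter $\delta$. By linearity of $\mathcal{P}(D)$ one has $\mathcal{P}(D)[\psi l_\delta]=\mathcal{P}(D)[\psi_0 l_\delta]+\mathcal{P}(D)[\psi_1 l_\delta]$, so on each of the three regions $\{u \text{ bounded away from } 0\}$, $\{1-z\bar{z}\le u^{1/(2-\epsilon)}\}$ and $\{u^{1/(2-\epsilon)}<1-z\bar{z}\}$ (for one fixed small $\epsilon\in(0,2)$) I would compare the most singular term of $\mathcal{P}(D)[\psi_0 l_\delta]$ with that of $\mathcal{P}(D)[\psi_1 l_\delta]$ as $r\to 1^-$, keep the dominant one, and then choose $\delta$ so that it carries the desired sign.

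On the first two regions the analysis of Section \ref{Action of the operator}, applied to $\psi_0$ and to $\psi_1$ separately, shows that the logarithmic term is the leading term of each of $\mathcal{P}(D)[\psi_0 l_\delta]$ and $\mathcal{P}(D)[\psi_1 l_\delta]$; their sum equals $-\frac{\psi}{u}\frac{2\delta z\bar{z}}{-\log(1-z\bar{z})}$ up to lower-order terms, and its sign --- using that $\psi$ is positive near the unit circle, which holds under $\beta(\gamma)<\beta(\tilde{\gamma})$ by Lemma \ref{lem positivity} --- is opposite to that of $\delta$. On the third region the two hypotheses in \eqref{cond_gamma_gamma_tilde} are precisely what makes $\psi_0$ dominate: $\beta(\gamma)<\beta(\tilde{\gamma})$ gives $\psi_1\approx\psi_0\,(1-z\bar{z})^{\beta(\gamma)-\beta(\tilde{\gamma})}u^{\tilde{\gamma}-\gamma}$, and $-\beta(\gamma)+2\gamma<-\beta(\tilde{\gamma})+2\tilde{\gamma}$ then forces the resulting power of $u$ to be strictly positive once $\epsilon$ is small, so that $\psi_1/u\lessapprox\psi_0/(u(-\log u))$ and hence $\mathcal{P}(D)[\psi_0 l_\delta]$ dominates $\mathcal{P}(D)[\psi_1 l_\delta]$. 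Thus on this region the sign of $\mathcal{P}(D)[\psi l_\delta]$ is that of $\mathcal{P}(D)[\psi_0 l_\delta]$, whose behaviour near $z=1$ is controlled by $C(\gamma)(1-z\bar{z})^2/u^2$ together with the logarithmic term $-\psi_0\frac{2\delta z\bar{z}}{u(-\log(1-z\bar{z}))}$.

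It then remains to assemble these pieces. For (a), $\gamma=\gamma_0$ means $C(\gamma)=0$, so on the third region the logarithmic term is the leading one, with sign $-\mathrm{sgn}(\delta)$, which is the same sign produced on the first two regions; taking $\delta_1>0$ yields $\mathcal{P}(D)[\psi l_{\delta_1}]<0$ on some annulus $r_0<r<1$ and $\delta_2<0$ yields $\mathcal{P}(D)[\psi l_{\delta_2}]>0$. For (b) and (c), $C(\gamma)\neq 0$; I would pick $\delta$ with $\mathrm{sgn}(\delta)=-\mathrm{sgn}(C(\gamma))$, so that the logarithmic term of $\mathcal{P}(D)[\psi_0 l_\delta]$ has the same sign as $C(\gamma)$. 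Then on the third region, whichever of $C(\gamma)(1-z\bar{z})^2/u^2$ and the logarithmic term dominates, its sign is $\mathrm{sgn}(C(\gamma))$, while on the first two regions the sign is $-\mathrm{sgn}(\delta)=\mathrm{sgn}(C(\gamma))$ as well. Hence $C(\gamma)<0$ with $\delta>0$ gives $\mathcal{P}(D)[\psi l_\delta]<0$ throughout (part (b)), and $C(\gamma)>0$ with $\delta<0$ gives $\mathcal{P}(D)[\psi l_\delta]>0$ (part (c)), after shrinking $r_0$ if needed.

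The step I expect to be the main obstacle is the domination estimate on the third region --- ruling out that the faster-decaying $\psi_1$ contributes a term competing with the singular terms of $\mathcal{P}(D)[\psi_0 l_\delta]$ --- but since this has already been reduced, in the discussion preceding the statement, to exactly the two inequalities of \eqref{cond_gamma_gamma_tilde}, the proof itself should be short and the remaining work mostly organizational.
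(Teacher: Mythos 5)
Your proposal is correct and follows essentially the same route as the paper: the paper's own argument is exactly the three-region case analysis (logarithmic term dominating when $u$ is bounded away from $0$ or when $1-z\bar{z}\leq u^{1/(2-\epsilon)}$, and $\mathcal{P}(D)[\psi_0 l_\delta]$ dominating $\mathcal{P}(D)[\psi_1 l_\delta]$ on the third region thanks to the two inequalities of \eqref{cond_gamma_gamma_tilde}), followed by the same choice of the sign of $\delta$ according to the sign of $C(\gamma)$. The only point worth flagging is that Lemma \ref{lem positivity} also assumes $\psi_1>0$ on $\D$, which you should carry along when invoking positivity of $\psi$ near the unit circle, exactly as the paper does when it applies the proposition.
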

\begin{rem}\label{rem_cond_gamma_gamma_tilde}
	Condition $\beta(\gamma)<\beta(\tilde{\gamma})$, $-\beta(\gamma)+2\gamma<-\beta(\tilde{\gamma})+2\tilde{\gamma}$ is equivalent to
	 $$\tilde{\gamma}'<\gamma<\min(\tilde{\gamma},\tilde{\gamma}'+2/\kappa).$$
\end{rem}

Remark \ref{rem_cond_gamma_gamma_tilde} shows that $\tilde{\gamma}'<\tilde{\gamma}$ is necessary for that the inequalities \eqref{cond_gamma_gamma_tilde} hold. Let us now recall an observation made in Section \ref{sec_normalized-testf}: Among standard test functions \eqref{testf}, those with $\gamma=\gamma_1-n, n\in \N$ and $g_0$ defined by \eqref{eq_gamma1-n} are the only that may have $\gamma'<\gamma$. Namely, in the domain located below the line of slope 1: $q=p-[\kappa^2(1+4n)^2-16]/32\kappa$, one has $\gamma'=\gamma_1'+n<\gamma_1-n=\gamma$. It implies that $\tilde{\gamma}$ in Proposition \ref{prop mixed testf} belongs to the set $\{\gamma_1-n: n\in \N\}$ and for $\tilde{\gamma}=\gamma_1-n$, the domain of applicability of this proposition is the half-plane below the line $q=p-[\kappa^2(1+4n)^2-16]/32\kappa$. In Proposition \ref{prop mixed testf}, let us set $\tilde{\gamma}=\gamma_1$ and state the following

\begin{prop}\label{prop mixed testf_gamma_1}
	Let
	\begin{equation}\label{mixed testf gamma1}
	\psi(z,\bar{z}):=\psi_0+\psi_1=(1-z\bar{z})^{-\beta(\gamma)}u^{\gamma}g_0(u)+(1-z\bar{z})^{-\beta(\gamma_1)}u^{\gamma_1}.
	\end{equation}
	where $\gamma_1$ is defined by \eqref{eq def gamma_1} and $\psi_0$ is a standard test function \eqref{testf} such that
	\begin{equation}\label{cond_prop_mixed testf}
	\beta(\gamma)<\beta(\gamma_1),-\beta(\gamma)+2\gamma<-\beta(\gamma_1)+2\gamma_1.
	\end{equation}
	Then
	\begin{enumerate}[before=\leavevmode,label=\upshape(\alph*),ref=\thethm (\Roman*)]
		\item If $\gamma=\gamma_0$ then there exists $\delta_1>0,\delta_2<0$ and $r_0<1$ such that in the annulus $r_0<r<1$, $
		\mathcal{P}(D)[\psi(z,\bar{z})l_{\delta_1}(z\bar{z})]<0$ and $\mathcal{P}(D)[\psi(z,\bar{z})l_{\delta_2}(z\bar{z})]>0.$
		\item If $C(\gamma)<0$ then there exists $\delta>0$ and $r_0<1$ such that in the annulus $r_0<r<1$, $\mathcal{P}(D)[\psi(z,\bar{z})l_{\delta}(z\bar{z})]<0$.
		\item If $C(\gamma)>0$ then there exists $\delta<0$ and $r_0<1$ such that in the annulus $r_0<r<1$, $\mathcal{P}(D)[\psi(z,\bar{z})l_{\delta}(z\bar{z})]>0$.
	\end{enumerate}
\end{prop}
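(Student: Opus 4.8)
\emph{Proof plan.} The idea is to derive Proposition~\ref{prop mixed testf_gamma_1} as the instance $\tilde\gamma=\gamma_1$ of Proposition~\ref{prop mixed testf}. The first thing I would check is that the second summand $\psi_1(z,\bar z)=(1-z\bar z)^{-\beta(\gamma_1)}u^{\gamma_1}$ of \eqref{mixed testf gamma1} is itself a standard test function \eqref{testf}: it is the member of the family \eqref{eq_gamma1-n} with $n=0$, because ${}_2F_1\bigl(0,b,c,u/4\bigr)\equiv1$ forces $g_0\equiv1$, whence $g_0(0)=1$ and $g_0$ is trivially smooth at $u=4$. Therefore $\psi=\psi_0+\psi_1$ is a mixed test function \eqref{mixed testf} with $\tilde\gamma=\gamma_1$, and the two inequalities \eqref{cond_prop_mixed testf} imposed on $\gamma$ are precisely the hypotheses \eqref{cond_gamma_gamma_tilde} of Proposition~\ref{prop mixed testf} for that value of $\tilde\gamma$.

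With that identification, conclusions (a), (b), (c) of Proposition~\ref{prop mixed testf_gamma_1} are nothing but conclusions (a), (b), (c) of Proposition~\ref{prop mixed testf} read at $\tilde\gamma=\gamma_1$, so there is nothing new to prove. In particular, the three--case comparison of the most singular term of $\mathcal{P}(D)(\psi l_\delta)$ near $z=1$ --- the logarithmic term dominating when $u$ is bounded away from $0$ and when $1-z\bar z\le u^{1/(2-\epsilon)}$, and $\mathcal{P}(D)(\psi_0 l_\delta)$ dominating in the remaining regime thanks to the two inequalities \eqref{cond_prop_mixed testf} --- is carried out in the proof of Proposition~\ref{prop mixed testf} verbatim, the only property of $\psi_1$ used in that regime being that it is a standard test function. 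If genuine sub-- and super--solutions are wanted as well, I would add that $\psi_1>0$ throughout $\D$, so that Lemma~\ref{lem positivity} together with $\beta(\gamma)<\beta(\gamma_1)$ yields $\psi>0$ near $r=1$.

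The one point I would make explicit in the write--up is the range of applicability: by Remark~\ref{rem_cond_gamma_gamma_tilde}, with $\tilde\gamma=\gamma_1$ the conditions \eqref{cond_prop_mixed testf} amount to $\gamma_1'<\gamma<\min(\gamma_1,\gamma_1'+2/\kappa)$, which in particular requires $\gamma_1'<\gamma_1$; since $\gamma_1-\gamma_1'=\tfrac{2}{\kappa}\sqrt{1+2\kappa(p-q)}-\tfrac12$, this holds exactly below the line $D_1$: $q-p=(16-\kappa^2)/32\kappa$. Thus the proposition is non--vacuous precisely on the side of $D_1$ where the candidate spectrum is $\beta_1$, which is the region entering Theorem~\ref{thm_main_theorem}. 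I do not expect any genuine obstacle here: all the analytic work is inherited from Proposition~\ref{prop mixed testf}, and the proof reduces to the two verifications above --- identifying $\psi_1$ as the $n=0$ standard test function, and matching hypotheses.
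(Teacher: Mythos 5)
Your proposal is correct and coincides with the paper's own route: the paper obtains this proposition simply by setting $\tilde{\gamma}=\gamma_1$ in Proposition \ref{prop mixed testf}, exactly as you do, and your identification of $\psi_1$ as the $n=0$ member of the family \eqref{eq_gamma1-n} (so that ${}_2F_1(0,b,c,u/4)\equiv 1$) supplies the one small verification the paper leaves implicit. Your closing remarks on positivity via Lemma \ref{lem positivity} and on the non-vacuity region below $D_1$ are consistent with how the paper uses the result in the proof of Lemma \ref{upper bound}.
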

\begin{rem}\label{rem sign}
	Condition $\beta(\gamma)<\beta(\gamma_1),-\beta(\gamma)+2\gamma<-\beta(\gamma_1)+2\gamma_1$ is equivalent to $$\gamma_1 '<\gamma<\min(\gamma_1,\gamma'_1+2/\kappa).$$
\end{rem}
Lemma \ref{lem positivity} shows that the condition \eqref{cond_prop_mixed testf} in Proposition \ref{prop mixed testf_gamma_1} includes the positivity of $\psi$ near the unit circle $r=1$.\\

Let us notice that, in \cite{DHLZ2018}, the authors used the idea of Proposition \ref{prop mixed testf_gamma_1}(a) to construct sub--solutions and super--solutions of the operator \eqref{eq1zz} and then proved Theorem \ref{spec B-D-Z}.

\subsection{Proof of Theorem \ref{thm_main_theorem}}
Let us construct mixed sub--solutions to $\mathcal{P}(D)$ and use Maximum principle method to estimate the average generalized integral means spectrum of whole--plane $\SLE$ in the domain $\mathfrak{D}_1\cap \mathcal{S}$. Recall that $\mathfrak{D}_1$ is the domain corresponding to the phase $\beta_1$ in the conjecture about the generalized spectrum $\beta(p,q)$ and $\mathcal{S}$ is the sector situated to the left of the line $\Delta_0$ and below the line $\Delta_1$, where both $\gamma_0$ and $\gamma_1$, so both $\beta_{0}$ and $\beta_{1}$, are well defined.\\

We first introduce the geometric elements that will be involved in our study and their characteristic algebraic expressions.\\
- {\it The left branch of the green parabola $\mathcal{G}$, $\gamma'_1=\gamma_0$.} One has $\gamma'_1<\gamma_0$ if and only if $(p,q)$ is located to the right of this line.\\
- {\it The right branch of the green parabola $\mathcal{G}$, $\gamma_0=\gamma'_1+2/\kappa$.} One has $\gamma_0<\gamma'_1+2/\kappa$ $ (\gamma_0>\gamma'_1+2/\kappa)$ if and only if $(p,q)$ is located to the left (right) of this line.\\
- {\it The right part starting at $Q_0$ of the blue quartic branch $\mathcal{Q}$, $\gamma'_1<\gamma_0$ and $\beta(\gamma_0)-2\gamma_0-1=\beta(\gamma_1)$.} One has $\beta(\gamma_0)-2\gamma_0-1<\beta(\gamma_1)$ if and only if $(p,q)$ is located to the right of this line.\\
- {\it The middle part $T_1T_0$ of the red parabola $\mathcal{R}$, $\gamma_0=\gamma_1$.} One has $\gamma_0<\gamma_1$ if and only if $(p,q)$ is located below this line.\\
- {\it The vertical line $D'_0$, $\gamma_0=-1/2$.} One has $\gamma_0>-1/2$ if and only if $(p,q)$ is located to the right of this line.\\
- {\it The line $D_1$ of slope 1 passing through $P_0$, $q-p=-(\kappa^2-16)/32\kappa$; $\gamma'_1=\gamma_1$.} One has $\gamma'_1<\gamma_1$ if and only if $(p,q)$ is located below this line.\\
- {\it The line $D_3$ of slope 1 passing through $Q_0$, $q-p=-1-\kappa/2$; $\gamma'_1=-1/2$.} One has $\gamma'_1<-1/2$ if and only if $(p,q)$ is located below this line.\\
- {\it The line $D_4$ of slope 1 passing through $Q'_0$, $q-p=-(2+\kappa)(4+\kappa)/2\kappa$; $\gamma'_1+2/\kappa=-1/2$.} One has $\gamma'_1+2/\kappa<-1/2$ if and only if $(p,q)$ is located below this line.\\

One should notice that the vertical line $D'_0$ passes through the point of concurrency $Q_0$ of the left branch of the green parabola, the blue quartic and the line $D_3$ as well as the intersection point $Q'_0$ of the right branch of the green parabola and the line $D_4$ (see Figure \ref{fig_involved_lines}).

\begin{figure}[h]
	\centering
	\begin{subfigure}{0.5\textwidth}
		\centering
		\includegraphics[width=0.9\linewidth]{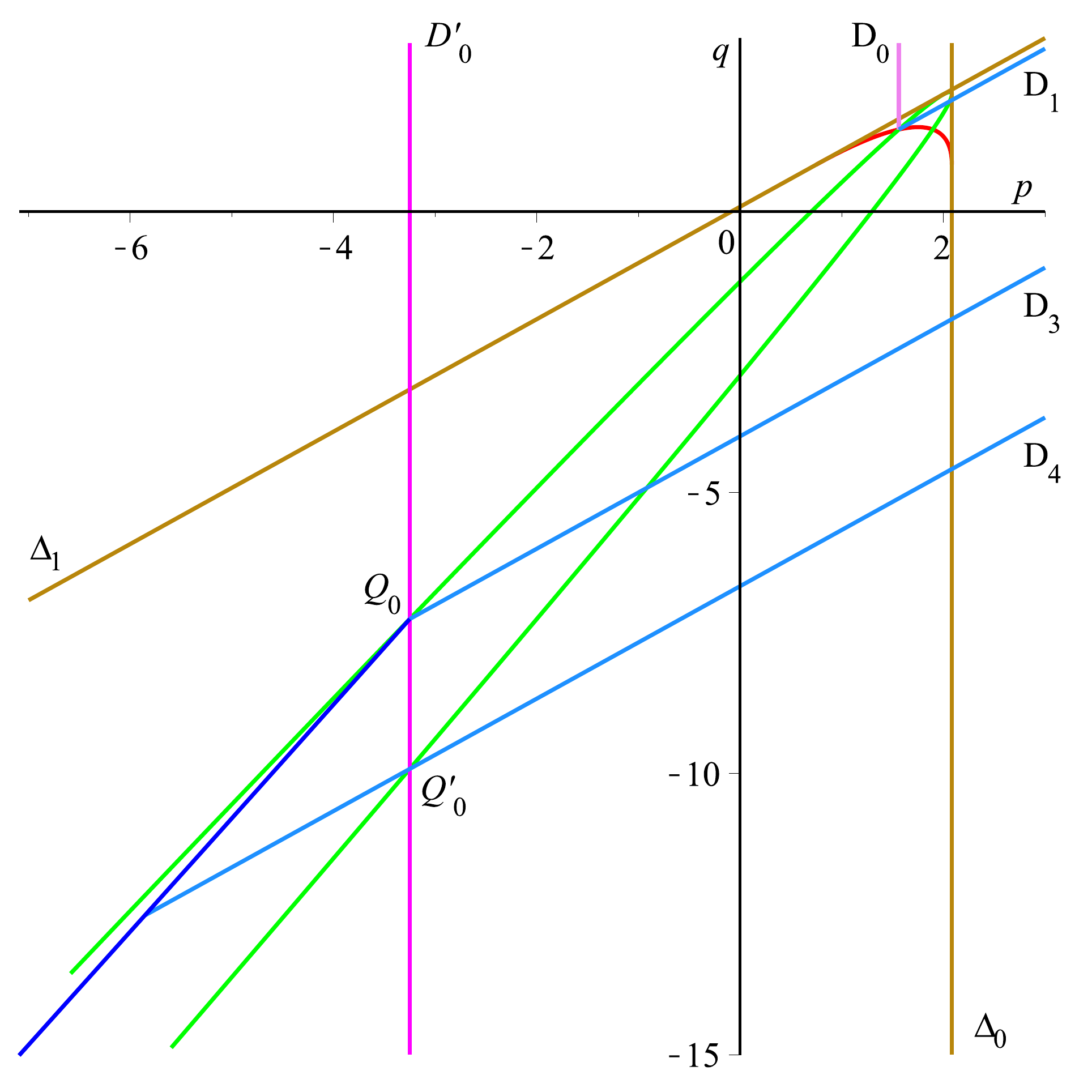}
		\caption{Large scale}
		\label{fig_involved_lines}
	\end{subfigure}%
	\begin{subfigure}{0.5\textwidth}
		\centering
		\includegraphics[width=0.9\linewidth]{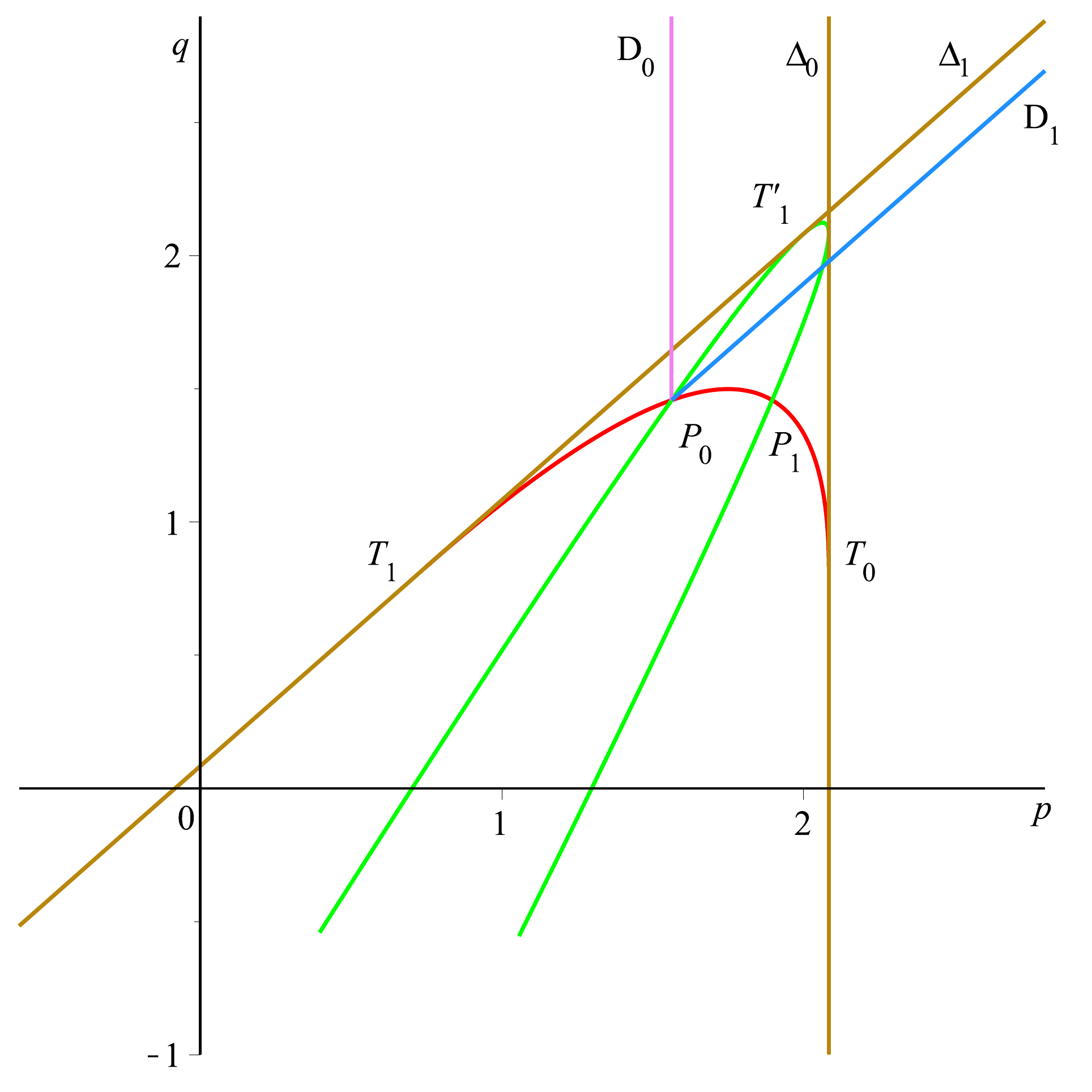}
		\caption{A zoom in the $(\Delta_0,\Delta_1)$ sector}
	\end{subfigure}
	\caption{{\it Involved lines.}}
	\label{whpl2}
\end{figure}

The above lines partition the domain $\mathfrak{D}_1\cap\mathcal{S}$ into four zones (see Figure \ref{fig_4_zones}):\\
- {\it Zone I:} closed domain situated to the right of the vertical line $D'_0$ below the point $Q_0$, to the right of the left branch of the green parabola above the point $Q_0$, below the middle part $T_1T_0$ of the red parabola and to the left of the right branch of the green parabola.\\
- {\it Zone II:} open domain situated to the right of the blue quartic up to point $Q_0$, to the left of the vertical line $D'_0$ down to point $Q'_0$, to the left of the right branch of the green parabola.\\
- {\it Zone III:} open domain situated to the right of the right branch of the green parabola up to point $Q'_0$, below the line $D_4$, to the left of the vertical line $\Delta_0$.\\
- {\it Zone IV:} closed domain whose boundary is the right branch of the green parabola between points $Q'_0$ and $P_1$, followed by the part $P_1P_0$ of the red parabola, followed by the line $D_1$ between $P_0$ and the intersection of $D_1$ and $\Delta_0$, followed by the vertical line $\Delta_0$ down to intersection point of $\Delta_0$ and $D_4$, followed by the line $D_4$ up to $Q'_0$.
\begin{figure}[h]
	\centering
	\includegraphics[width=0.5\linewidth]{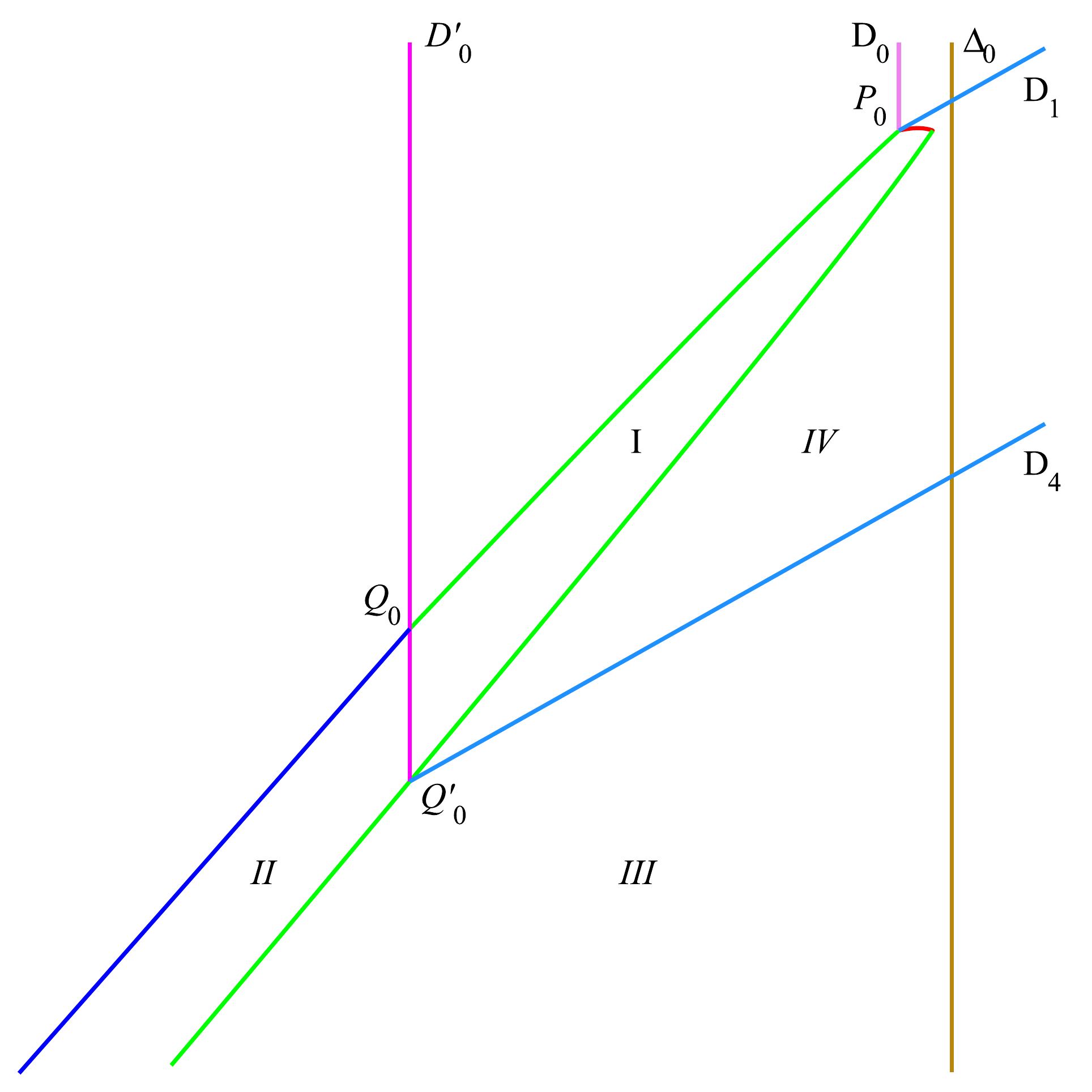}
	\caption{{\it Domain $\mathfrak{D}_1\cap\mathcal{S}$ partitioned into four zones.}}
	\label{fig_4_zones}
\end{figure}

\begin{lem}\label{upper bound}
	The average generalized integral means spectrum $\beta(p,q)$ of whole--plane $\SLE$ is bounded above as $\beta(p,q)\leq \beta_1(p,q)$ in the domain $\mathfrak{D}_1\cap \mathcal{S}$.
\end{lem}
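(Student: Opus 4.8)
The plan is to apply the maximum-principle method described above. For each $(p,q)\in\mathfrak{D}_1\cap\mathcal{S}$ I will construct a positive sub-solution $\phi_-$ of $\mathcal{P}(D)$ on some annulus $r_0<r<1$ whose integral means exponent equals $\beta_1(p,q)$; since the solution $G(z,\bar z)=\mathbb{E}(|z|^q|f'(z)|^p/|f(z)|^q)$ of \eqref{eq1zz} then satisfies $0<G<M\phi_-$ on that annulus, the minimum/maximum principle yields $\beta(p,q)=\epsilon_G\le\epsilon_{\phi_-}=\beta_1(p,q)$, which is the assertion.

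The sub-solution will be a mixed one, $\phi_-=(\psi_0+\psi_1)\,l_{\delta_-}$ with $\delta_->0$, as in Proposition~\ref{prop mixed testf_gamma_1}. Here $\psi_1(z,\bar z)=(1-z\bar z)^{-\beta(\gamma_1)}u^{\gamma_1}$ is the single test function attached to $\gamma_1$; since $\gamma_1\ge 1/\kappa>-1/2$ everywhere on $\mathcal{S}$, $\psi_1$ is positive on $\D$ and $\epsilon_{\psi_1}=\beta(\gamma_1)=\beta_1(p,q)$. The summand $\psi_0$ is a standard test function \eqref{testf} attached to a parameter $\gamma=\gamma(p,q)$ to be chosen. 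If $\gamma$ lies in the window of Remark~\ref{rem sign}, namely $\gamma_1'<\gamma<\min(\gamma_1,\gamma_1'+2/\kappa)$, then in particular $\beta(\gamma)<\beta(\gamma_1)$, so by Lemma~\ref{lem positivity} the function $\psi=\psi_0+\psi_1$ is positive near $r=1$ and $\epsilon_{\phi_-}=\max(\epsilon_{\psi_0},\beta_1(p,q))$; and if moreover $\gamma=\gamma_0$, resp.\ $C(\gamma)<0$, then part (a), resp.\ (b), of Proposition~\ref{prop mixed testf_gamma_1} provides $\delta_->0$ with $\mathcal{P}(D)[\phi_-]<0$. So it suffices to exhibit, in each of the four zones of Figure~\ref{fig_4_zones}, a parameter $\gamma$ for which a standard test function \eqref{testf} exists and such that (i) $\gamma_1'<\gamma<\min(\gamma_1,\gamma_1'+2/\kappa)$, (ii) $\gamma=\gamma_0$ or $C(\gamma)<0$, and (iii) $\epsilon_{\psi_0}\le\beta_1(p,q)$, where $\epsilon_{\psi_0}=\beta(\gamma)$ when $\gamma\ge-1/2$ and $\epsilon_{\psi_0}=\beta(\gamma)-2\gamma-1$ when $\gamma<-1/2$.

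In Zones~I and~II the choice $\gamma=\gamma_0$ works: from the algebraic characterisations of the bounding curves listed just before the zone definitions one checks that $\gamma_1'<\gamma_0<\min(\gamma_1,\gamma_1'+2/\kappa)$ on these zones, so (i) holds, and (ii) holds with $C(\gamma_0)=0$ (part (a)); moreover $\epsilon_{\psi_0}$ equals $\beta_0(p)$ in Zone~I (where $\gamma_0>-1/2$) and $\beta_{tip}(p)$ in Zone~II (where $\gamma_0<-1/2$), and the bounds $\beta_0(p)\le\beta_1(p,q)$ — which, via the dual identity $\beta(\gamma)=\beta(\gamma')$ together with $\gamma_{lin}\in[\gamma_1',\gamma_1]$ on $\mathfrak{D}_1$, is equivalent to $\gamma_1'\le\gamma_0\le\gamma_1$ — and $\beta_{tip}(p)\le\beta_1(p,q)$ — the ``to the right of the blue quartic $\mathcal{Q}$'' condition — are exactly how Zones~I and~II are cut out; a standard test function of type \eqref{eq_g_0_normalized} exists there since $\gamma_0<\gamma_{lin}$ (that is, $\gamma_0<\gamma_0'$). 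In Zones~III and~IV one is to the right of the right branch of $\mathcal{G}$, i.e.\ $\gamma_0>\gamma_1'+2/\kappa$ (or $\gamma_0$ is not real), so $\gamma_0$ is no longer in the window of (i); instead one picks $\gamma$ strictly inside $(\gamma_1',\min(\gamma_1,\gamma_1'+2/\kappa))$, which automatically gives $\gamma<\gamma_1'+2/\kappa<\gamma_0<\gamma_0^+$, hence $C(\gamma)<0$, so part (b) applies. To meet (iii): in Zone~IV one is above $D_4$, so $\gamma_1'+2/\kappa>-1/2$, and one may take $\gamma\in\big(\max(\gamma_1',-1/2),\ \min(\gamma_1,\gamma_1'+2/\kappa,\gamma_{lin})\big)$, a nonempty interval; then $\gamma\ge-1/2$ gives $\epsilon_{\psi_0}=\beta(\gamma)<\beta(\gamma_1)=\beta_1(p,q)$ (because $\gamma_1'<\gamma<\gamma_1$), and $\gamma<\gamma_{lin}$ secures a generic test function \eqref{eq_g_0_normalized}. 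In Zone~III one is below $D_4$, so $\gamma_1'+2/\kappa<-1/2$, the whole window $(\gamma_1',\gamma_1'+2/\kappa)$ lies below $-1/2$, and $\epsilon_{\psi_0}(\gamma)=\beta(\gamma)-2\gamma-1=\tilde\beta(\gamma)-1$ with $\tilde\beta(\gamma):=\beta(\gamma)-2\gamma$ a parabola of vertex $2/\kappa+1/4$, hence decreasing on the window; a direct computation gives $\tilde\beta(\gamma_1'+2/\kappa)=\beta(\gamma_1')+2\gamma_1'-4/\kappa-1$, so $\epsilon_{\psi_0}(\gamma_1'+2/\kappa)-\beta_1(p,q)=2\gamma_1'-4/\kappa-2<0$ using $\gamma_1'<2/\kappa+1$, which holds throughout Zone~III; by continuity $\epsilon_{\psi_0}(\gamma)<\beta_1(p,q)$ for $\gamma$ in a left-neighbourhood of $\gamma_1'+2/\kappa$, and choosing such a $\gamma$ with $\gamma<\gamma_{lin}$, away from the countably many exceptional values, yields a generic test function satisfying (i)--(iii).

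The step I expect to be the main obstacle is this last zone-by-zone analysis — establishing in each zone that the admissibility window for $\gamma$ is nonempty and contains a value satisfying simultaneously the sign condition (ii) and the exponent bound (iii). It is most delicate in Zone~III, where $\gamma<-1/2$ forces the ``tip'' branch $\beta(\gamma)-2\gamma-1$ into (iii) and one must run the convexity estimate above, and on the boundary pieces of the zones — on $D'_0$, $D_1$, $\Delta_0$, $D_4$, and the red and green parabolas — where several of the defining inequalities degenerate to equalities: there one chooses $\gamma$ continuously over the closed zone, or passes to the limit using the continuity of $\beta_1$, and, where the generic family \eqref{eq_g_0_normalized} is unavailable (its use requires $\gamma<\gamma'$), one substitutes one of the polynomial test functions \eqref{eq_gamma1-n} with $\gamma=\gamma_1-n$, checking that it is singularity-free and positive near $u=0$. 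Once a valid $\gamma$ has been fixed in each zone, positivity of $\phi_-$ near $r=1$ is immediate from Lemma~\ref{lem positivity}, and Lemma~\ref{upper bound} follows from the maximum principle as in the first paragraph.
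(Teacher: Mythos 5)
Your overall strategy is the paper's: the same mixed sub--solution $\phi_-=(\psi_0+\psi_1)l_{\delta_-}$ with $\psi_1=(1-z\bar z)^{-\beta(\gamma_1)}u^{\gamma_1}$, the same appeal to Lemma \ref{lem positivity} and Proposition \ref{prop mixed testf_gamma_1}, the same four zones, and conditions (i)--(iii) that match the paper's \eqref{sub cond 1}--\eqref{sub cond 2}. Zones III and IV are handled exactly as in the paper. The gap is in Zone I, where you fix $\gamma=\gamma_0$ and assert that ``$\gamma_0<\gamma_{lin}$ (that is, $\gamma_0<\gamma_0'$)'' holds there, so that a standard test function of type \eqref{eq_g_0_normalized} with $\gamma=\gamma_0$ exists. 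This is false on a nonempty open part of Zone I: $\gamma_0<\gamma_{lin}$ is equivalent to $p<p_0(\kappa)$, and Zone I (characterized by $\max(-1/2,\gamma_1')<\gamma_0<\min(\gamma_1'+2/\kappa,\gamma_1)$) extends to the right of the line $D_0$. For instance, with $\kappa=2$ and $(p,q)=(2,3/2)$ one has $\gamma_0=1$, $\gamma_1'\approx 0.134$, $\gamma_1\approx 1.366$, $\gamma_1'+2/\kappa\approx 1.134$, so the point lies in Zone I, yet $\gamma_0=1>\gamma_0'=1/2$, and neither the generic family \eqref{eq_g_0_normalized} (which requires $\gamma<\gamma'$) nor the polynomial families (which require $\gamma_1-\gamma\in\N$ or $\gamma_1^--\gamma\in\N$) furnish a standard test function with $\gamma=\gamma_0$ there. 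Your Zone~I argument therefore does not produce a sub--solution on that sub--region.

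The repair is exactly what the paper does in Zone I (and what you yourself do in Zone IV): do not insist on $\gamma=\gamma_0$, but choose $\gamma$ strictly inside the nonempty interval $\bigl(\max(-1/2,\gamma_1'),\,\min(\gamma_0,\gamma_{lin})\bigr)$; then $\gamma<\gamma_{lin}$ guarantees $\gamma<\gamma'$ (so the generic test function exists and one can dodge the countably many exceptional parameter values), $\gamma<\gamma_0$ gives $C(\gamma)<0$ so part (b) of Proposition \ref{prop mixed testf_gamma_1} applies, and $\gamma_1'<\gamma<\gamma_{lin}<\gamma_1$ gives the strict inequality $\beta(\gamma)<\beta(\gamma_1)$ needed both for Lemma \ref{lem positivity} and for $\epsilon_{\psi_0}<\beta_1(p,q)$. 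The same perturbation (taking $\gamma<\gamma_0$ close to $\gamma_0$, as the paper does in Zone II) also removes the secondary defects of the rigid choice $\gamma=\gamma_0$: on the boundary pieces of Zone I where $\gamma_0=\gamma_1'$ or $\gamma_0=\gamma_1$ the hypothesis $\beta(\gamma)<\beta(\gamma_1)$ of Lemma \ref{lem positivity} and of \eqref{cond_prop_mixed testf} degenerates to an equality, and at the exceptional parameters where $\gamma_0-\gamma_0'$ or $\gamma_1-\gamma_0$ is an integer the coefficient $C_0$ in \eqref{eq C_0} is ill--defined; your proposed limiting arguments for these cases are not needed once $\gamma$ is taken strictly inside the open window.
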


\begin{proof}	 
We will construct test functions $\psi$ of the form \eqref{mixed testf gamma1} such that $C(\gamma)<0$, its integral mean exponent is $\beta(\gamma_1)$ and Proposition \ref{prop mixed testf_gamma_1} is applicable. Then their logarithmic modifications $\psi l_\delta, \delta>0$ are given as sub-solutions to the differential operator $\mathcal{P}(D)$. To be precise, let us detail the conditions that these functions are asked to verify. First, we need $\gamma<\gamma_{lin}$ since we want $\psi_0(z,\bar{z})$ to be a standard test function. Second, the integral means exponent of $\psi$ is given by $\beta(\gamma_1)$. Equivalently, the integral means exponent of $\psi_0$ is smaller than $\beta(\gamma_1)$. Third, the test function need to be positive. Lemma \ref{lem positivity} states that inequality $\beta(\gamma)<\beta(\gamma_1)$ is a sufficient condition for that. Finally, in order to construct sub-solutions by making use of Proposition \ref{prop mixed testf_gamma_1}, one needs  $\beta(\gamma)<\beta(\gamma_1),-\beta(\gamma)+2\gamma<-\beta(\gamma_1)+2\gamma_1$ and $C(\gamma)<0$. Remark \ref{rem sign} implies that the double inequality $\beta(\gamma)<\beta(\gamma_1),-\beta(\gamma)+2\gamma<-\beta(\gamma_1)+2\gamma_1$ is equivalent to $\gamma_1 '<\gamma<\min(\gamma_1,\gamma'_1+2/\kappa)$. One also notices that $\gamma^+_0 >{\gamma^+_0}'$, so $\gamma<\gamma^+_0$  because of the constraint $\gamma<\gamma'$. Therefore, the inequality $C(\gamma)<0$ is equivalent to $\gamma<\gamma_0$. The construction of sub-solutions is in fact to take appropriate values of $\gamma$ that verify all above conditions. Namely,
\begin{align}
	&\gamma'_1<\gamma<\min\bigg(\gamma_0,\gamma_1,\gamma'_1+\frac{2}{\kappa},\gamma_{lin}\bigg) \label{sub cond 1} \\
&\beta(\gamma_1)> \text{ integral means exponent of }\psi_0. \label{sub cond 2}
\end{align}

 We now go into the construction of sub-solutions in the four zones defined above.

	{\it Zone I.} This zone is characterized by 
	\begin{equation}
		\max\bigg(-\frac{1}{2},\gamma_1'\bigg)<\gamma_0<\min\bigg(\gamma_1'+\frac{2}{\kappa},\gamma_1\bigg).
	\end{equation}
	Let us take $\gamma$ such that $\max(-1/2,\gamma_1')<\gamma<\min(\gamma_0,\gamma_{lin})$. Obviously, $\gamma$ verifies inequalities \eqref{sub cond 1}.  Moreover, the integral means exponent of $\psi_0$ is given by $\beta(\gamma)$ which is smaller than $\beta(\gamma_1)$ by Remark \ref{rem positivity}.
	
	{\it Zone II.} This zone is characterized by
	\begin{equation}
	\gamma_1'<\gamma_0<\min\bigg(-\frac{1}{2},\gamma_1'+\frac{2}{\kappa}\bigg)<\gamma_1
	\end{equation}
	Let us take $\gamma$ such that $\gamma<\gamma_0$ and sufficiently closed to $\gamma_0$. Then inequalities \eqref{sub cond 1} are verified. We also notice that the integral means exponent of $\psi_0$ is given by $\beta(\gamma)-2\gamma+1$ that can be taken closed to $\beta(\gamma_0)-2\gamma_0+1$, while the last is smaller than $\beta(\gamma_1)$ in our current domain of consideration.
	
	{\it Zone III.} This zone is characterized by
	\begin{equation}
	\gamma_1'<\gamma_1'+\frac{2}{\kappa}<\min\bigg(-\frac{1}{2},\gamma_0\bigg)<\gamma_1.
	\end{equation}
	Let us take $\gamma$ such that $\gamma<\gamma'_1+2/\kappa$ and sufficiently closed to $\gamma'_1+2/\kappa$. Then inequalities \eqref{sub cond 1} are verified. The integral means exponent of $\psi_0$ is given by $\beta(\gamma)-2\gamma+1$ that can be taken closed to $\beta(\gamma'_1+2/\kappa)-2(\gamma'_1+2/\kappa)+1$. One notices that
	\begin{equation}
		\beta(\gamma'_1+2/\kappa)-2(\gamma'_1+2/\kappa)+1-\beta(\gamma_1)=-2\gamma_1-1<0.
	\end{equation}
	Therefore the integral means exponent of $\psi_0$ is smaller than $\beta(\gamma_1)$.
	
		{\it Zone IV.} This zone is characterized by
		\begin{equation}
			\max\bigg(-\frac{1}{2},\gamma_1'\bigg)<\min\bigg(\gamma_1'+\frac{2}{\kappa},\gamma_1\bigg)<\gamma_0.
		\end{equation}
		Let us take $\gamma$ such that $\max(-1/2,\gamma_1')<\gamma<\min(\gamma_1'+2/\kappa,\gamma_1,\gamma_{lin})$. As above, inequalities \eqref{sub cond 1} are fulfilled. The integral means exponent of $\psi_0$ is given by $\beta(\gamma)$ which is smaller than $\beta(\gamma_1)$ by Remark \ref{rem positivity}.
		
		In summary, it is possible to find a value of $\gamma$ that fulfills conditions \eqref{sub cond 1}, \eqref{sub cond 2} in each zone. Thank to Lemma \ref{lem positivity} and Proposition \ref{prop mixed testf_gamma_1}, one can construct sub-solutions for the differential operator $\mathcal{P}(D)$ such that their integral means exponent is $\beta(\gamma_1)$ as
		\begin{equation}\label{mixed sub-solution gamma1}
			\phi_{-}(z,\bar{z})=[(1-z\bar{z})^{-\beta(\gamma)}u^{\gamma}g_0(u)+(1-z\bar{z})^{-\beta(\gamma_1)}u^{\gamma_1}](-\log(1-z\bar{z}))^\delta, \delta>0.
		\end{equation}
		Therefore, the average generalized integral means spectrum $\beta(p,q)$ of whole--plane $\SLE$ is bounded above by $\beta(\gamma_1)$ in the domain $\mathfrak{D}_1\cap\mathcal{S}$ by means of the maximum principle.
\end{proof}

{\bf Proof of Theorem \ref{thm_main_theorem}.} From Proposition \ref{lower bound} we have that
\begin{equation}
	\beta_1(p,q)\leq \beta(p,q),\,\,\text{in }\mathfrak{D}_1\cap\mathcal{I}.
\end{equation}
On the other hand, since $\mathfrak{D}_1\cap\mathcal{I}\subset \mathfrak{D}_1\cap\mathcal{S}$, Lemma \ref{upper bound} implies that
\begin{equation}
\beta(p,q)\leq \beta_1(p,q),\,\,\text{in }\mathfrak{D}_1\cap\mathcal{I}.
\end{equation}
Therefore, the average generalized integral means spectrum $\beta(p,q)$ of whole--plane $\SLE$ is given by $\beta_1$ in $\mathfrak{D}_1\cap\mathcal{I}$. This finishes the proof of Theorem \ref{thm_main_theorem}.

\begin{rem}
	In this section, we have constructed sub--solutions of the form \eqref{mixed sub-solution gamma1} for the differential operator $\mathcal{P}(D)$ and applied the maximum principle to extend the validity domain of the conjecture in \cite{DHLZ2018} to the domain $\mathfrak{D}_1\cap \mathcal{S}$. In the rest of the $(p,q)$--plane, Proposition \ref{upper bound} gives us an upper bound $\beta(p,q)\leq \beta_1(p,q)$. One may ask if it is possible to use the same method as above to prove the sharpness of this estimation. Namely, if we can construct super-solutions of the form
	\begin{equation}\label{mixed super-solution gamma1}
	\phi_{+}(z,\bar{z})=[(1-z\bar{z})^{-\beta(\gamma)}u^{\gamma}g_0(u)+(1-z\bar{z})^{-\beta(\gamma_1)}u^{\gamma_1}](-\log(1-z\bar{z}))^\delta, \delta<0,
	\end{equation}
	in the rest of the parameter plane, then the generalized spectrum $\beta(p,q)$ is bounded from below by $\beta_1(p,q)$. Hence we arrive at $\beta(p,q)= \beta_1(p,q)$ and so extend the validity zone of the conjecture. Unfortunately, the answer is that it is impossible. To see that, let us try constructing super-solutions \eqref{mixed super-solution gamma1} on $\mathfrak{D}_1\cap\mathcal{S}$.
	
	We will construct mixed test functions \eqref{mixed testf gamma1} that verify the conditions in the proof of Theorem \ref{thm_main_theorem}, except that the inequality $C(\gamma)<0$ is replaced by $C(\gamma)>0$. The construction of super-solutions is in fact to take the values of $\gamma$ such that
	\begin{align}
		&\gamma_0<\gamma<\min\bigg(\gamma_0^{+},\gamma_1,\gamma'_1+\frac{2}{\kappa},\gamma_{lin}\bigg) \label{super cond 1},\\
		&\beta(\gamma_1)> \text{ integral means exponent of }\psi_0. \label{super cond 2}
	\end{align}
	We have that $\beta(\gamma_1)>\max(\beta(\gamma_0),\beta(\gamma_0)-2\gamma_0-1)$ in $\mathfrak{D}_1$. On the other hand, the integral means exponent of $\psi_0$ is given by $\beta(\gamma)$ if $\gamma\geq 1/2$ and by $\beta(\gamma)-2\gamma-1$ if $\gamma<-1/2$. So if we take $\gamma$ sufficiently closed to $\gamma_0$ then inequality \eqref{super cond 2} holds thank to the continuity of the function $\beta$. It is now sufficient to look for conditions under which there exists $\gamma$ satisfying the inequalities \eqref{super cond 1}. These conditions are
	
	\begin{equation}
	\gamma_0<\min\bigg(\gamma_1,\gamma'_1+\frac{2}{\kappa},\gamma_{lin}\bigg)
	\end{equation}
	This is geometrically translated into: $(p,q)$ belongs to the domain $\mathfrak{D}$ mentioned in Theorem \ref{spec B-D-Z}.
	Therefore, we conclude that one can construct super-solutions \eqref{mixed super-solution gamma1} to the differential operator $\mathcal{P}(D)$ in $\mathfrak{D}_1\cap\mathcal{S}$ if and only if $(p,q)$ belongs to $\mathfrak{D}$. Recall that $\mathfrak{D}$ is the domain where the spectrum has already been computed in Theorem \ref{spec B-D-Z}. Therefore, no extension of the validity zone has been made.
\end{rem}

\section*{Acknowledgement}
I would like to express my gratitude to Michel Zinsmeister who provided continuous supports and encouragements throughout this work.
I wish to thank Bertrand Duplantier for insightful conversations.
I am also grateful to the scientific and administrative teams at Institut de Math\'ematiques de Bordeaux (IMB), especially to the Analysis group, for their supports during this work.

\bibliographystyle{amsplain}
\bibliography{bib}
\nocite{*}
\end{document}